\theoremstyle{plain}
\newtheorem{lemma}{Lemma}[section]
\newtheorem{theorem}[lemma]{Theorem}
\newtheorem{proposition}[lemma]{Proposition}
\newtheorem{corollary}[lemma]{Corollary}
\theoremstyle{definition}
\newtheorem{definition}[lemma]{Definition}
\newtheorem{example}[lemma]{Example}
\theoremstyle{remark}
\newtheorem{remark}[lemma]{Remark}
\newtheorem{notation}[lemma]{Notation}
\newtheorem*{note}{Note}
\newtheorem*{stat}{\name}
\newcommand{\name}{testing}
\newenvironment{all}[1]{\renewcommand{\name}{#1}\begin{stat}}
                        {\end{stat}}
\newcommand{\qedc}{{\qed}~{\rm Claim~{\theclaim}.}}
\newcommand{\qedsc}{{\qed}~{\rm Claim.}}
\numberwithin{equation}{section}
\numberwithin{figure}{section}
\newcommand{\pup}[1]{\textup{(}{#1}\textup{)}}
\newcommand{\jirr}{join-ir\-re\-duc\-i\-ble}
\newcommand{\mirr}{meet-ir\-re\-duc\-i\-ble}
\newcommand{\jsd}{join-sem\-i\-dis\-trib\-u\-tive}
\newcommand{\jsdy}{join-sem\-i\-dis\-trib\-u\-tiv\-i\-ty}
\newcommand{\msd}{meet-sem\-i\-dis\-trib\-u\-tive}
\newcommand{\msdy}{meet-sem\-i\-dis\-trib\-u\-tiv\-i\-ty}
\newcommand{\Msdy}{Meet-sem\-i\-dis\-trib\-u\-tiv\-i\-ty}
\newcommand{\msubsemi}{meet-sub\-sem\-i\-lat\-tice}
\newcommand{\contr}{a contradiction}
\newcommand{\set}[1]{\{#1\}}
\newcommand{\setm}[2]{\set{#1\mid#2}}
\newcommand{\seq}[1]{\langle{#1}\rangle}
\newcommand{\famm}[2]{(#1\mid#2)}
\newcommand{\Famm}[2]{\bigl(#1\mid#2\bigr)}
\newcommand{\fF}{\mathfrak{F}}
\newcommand{\fS}{\mathfrak{S}}
\newcommand{\oo}[1]{\left]{#1}\right[}
\newcommand{\oc}[1]{\left]{#1}\right]}
\newcommand{\co}[1]{\left[{#1}\right[}
\newcommand{\cc}[1]{\left[{#1}\right]}
\newcommand{\so}[1]{\boldsymbol{\delta}_{#1}}
\DeclareMathOperator{\Inv}{inv}
\DeclareMathOperator{\Ker}{Ker}
\DeclareMathOperator{\Int}{int}
\DeclareMathOperator{\Cl}{cl}
\DeclareMathOperator{\J}{Ji}
\DeclareMathOperator{\M}{Mi}
\newcommand{\pji}[3]{\seq{{#1},{#2};{#3}}}
\newcommand{\pUji}[3]{\seq{{#1},{#2}}_{#3}}
\newcommand{\op}{\mathrm{op}}
\DeclareMathOperator{\rD}{D}
\newcommand{\cC}{\mathcal{C}}
\newcommand{\cF}{\mathcal{F}}
\newcommand{\cI}{\mathcal{I}}
\renewcommand{\SS}{\mathbb{S}}
\newcommand{\bD}{\mathbin{\boldsymbol{D}}}
\newcommand{\dnw}{\mathbin{\downarrow}}
\newcommand{\ddnw}{\mathbin{\downdownarrows}}
\newcommand{\upw}{\mathbin{\uparrow}}
\newcommand{\utr}{\trianglelefteq}
\newcommand{\Pow}{\mathfrak{P}}
\newcommand{\cpl}{\mathsf{c}}
\newcommand{\jz}{$(\vee,0)$}
\newcommand{\jzu}{$(\vee,0,1)$}
\newcommand{\jzs}{\jz-semi\-lat\-tice}
\newcommand{\jzus}{\jzu-semi\-lat\-tice}
\newcommand{\muh}{$(\wedge,1)$-ho\-mo\-mor\-phism}
\newcommand{\js}{join-sem\-i\-lat\-tice}
\newcommand{\jh}{join-ho\-mo\-mor\-phism}
\newcommand{\mh}{meet-ho\-mo\-mor\-phism}
\newcommand{\res}{\mathbin{\restriction}}
\newcommand{\es}{\varnothing}
\newcommand{\sA}{\mathbf{\mathsf{A}}}
\newcommand{\sB}{\mathsf{B}}
\newcommand{\sD}{\mathsf{D}}
\newcommand{\sM}{\mathsf{M}}
\newcommand{\sN}{\mathsf{N}}
\newcommand{\sP}{\mathsf{P}}
\newcommand{\sa}{\mathsf{a}}
\renewcommand{\sb}{\mathsf{b}}
\newcommand{\se}{\mathsf{e}}
\renewcommand{\sf}{\mathsf{f}}
\newcommand{\su}{\mathsf{u}}
\newcommand{\sv}{\mathsf{v}}
\newcommand{\sx}{\mathsf{x}}
\newcommand{\sy}{\mathsf{y}}
\newcommand{\hsx}{\hat{\mathsf{x}}}
\newcommand{\hsy}{\hat{\mathsf{y}}}
\newcommand{\sz}{\mathsf{z}}
\newcommand{\FL}{\operatorname{\mathrm{F}_{\mathbf{L}}}}
\newcommand{\ba}{\boldsymbol{a}}
\newcommand{\bb}{\boldsymbol{b}}
\newcommand{\hba}{\hat{\boldsymbol{a}}}
\newcommand{\hbb}{\hat{\boldsymbol{b}}}
\newcommand{\bc}{\boldsymbol{c}}
\newcommand{\be}{\boldsymbol{e}}
\renewcommand{\bf}{\boldsymbol{f}}
\newcommand{\bp}{\boldsymbol{p}}
\newcommand{\bq}{\boldsymbol{q}}
\newcommand{\bx}{\boldsymbol{x}}
\newcommand{\by}{\boldsymbol{y}}
\newcommand{\tb}{\tilde{\mathsf{b}}}
\newcommand{\Veg}[1]{\ensuremath{(\mathrm{Veg}_{#1})}}
\newcommand{\Gzp}[1]{\ensuremath{\mathrm{Gzp}(#1)}}
\begin{document}

\title[Associahedra and permutohedra]{Sublattices of associahedra and permutohedra}

\author[L. Santocanale]{Luigi Santocanale}
\address{Laboratoire d'Informatique Fondamentale de Marseille\\
Aix-Marseille Universit\'e\\
13288 Marseille Cedex 9 \\
France}
\email{luigi.santocanale@lif.univ-mrs.fr}
\urladdr{http://www.lif.univ-mrs.fr/\~{}lsantoca/}

\author[F. Wehrung]{Friedrich Wehrung}
\address{LMNO, CNRS UMR 6139\\
D\'epartement de Math\'ematiques\\
Universit\'e de Caen\\
14032 Caen Cedex\\
France}
\email{wehrung@math.unicaen.fr}
\urladdr{http://www.math.unicaen.fr/\~{}wehrung}

\subjclass[2010]{06B20, 06B10, 20F55}

\keywords{Lattice; Tamari; associahedron; permutohedron; Cambrian; bounded; doubling; Gazpacho identities; polarized measure; join-dependency; subdirectly irreducible}

\date{\today}

\begin{abstract}
Gr\"atzer asked in 1971 for a characterization of sublattices of Tamari lattices. A natural candidate was coined by McKenzie in 1972 with the notion of a \emph{bounded homomorphic image of a  free lattice}---in short, \emph{bounded lattice}. Urquhart proved in 1978 that every Tamari lattice is bounded (thus so are its sublattices). Geyer conjectured in 1994 that every finite bounded lattice embeds into some Tamari lattice.

We disprove Geyer's conjecture, by introducing an infinite collection of lattice-theoretical identities that hold in every Tamari lattice, but not in every finite bounded lattice. Among those finite counterexamples, there are the \emph{permutohedron} on four letters~$\sP(4)$, and in fact two of its subdirectly irreducible retracts, which are \emph{Cambrian lattices of type~A}.

For natural numbers~$m$ and~$n$, we denote by~$\sB(m,n)$ the (bounded) lattice obtained by doubling a join of~$m$ atoms in an $(m+n)$-atom Boolean lattice. We prove that~$\sB(m,n)$ embeds into a Tamari lattice if{f} $\min\set{m,n}\leq1$, and that $\sB(m,n)$ embeds into a permutohedron if{f} $\min\set{m,n}\leq2$. In particular, $\sB(3,3)$ cannot be embedded into any permutohedron. Nevertheless we prove that~$\sB(3,3)$ is a homomorphic image of a sublattice of the permutohedron on~$12$ letters.
\end{abstract}

\maketitle

\section{Introduction}\label{S:Intro}

For every positive integer~$n$, the set~$\sA(n)$ of all binary
bracketings of $n+1$ symbols~$x_0$, $x_1$, \dots, $x_n$ can be
partially ordered by the reflexive, transitive closure of the binary
relation consisting of all the pairs $(s,t)$ where~$t$ is obtained
from~$s$ by replacing a subword of the form $(uv)v$ by $u(vw)$. The
study of the poset~$\sA(n)$ originates in Tamari~\cite{Tam62}, and is
then pursued in many papers. In particular, Friedman and
Tamari~\cite{FrTa67} prove that~$\sA(n)$ is a \emph{lattice}, that is,
every pair~$x$, $y$ of elements has a least upper bound (join) $x\vee
y$ and a greatest lower bound (meet) $x\wedge y$. The lattice~$\sA(n)$
is called a \emph{Tamari lattice}, or \emph{associativity lattice}, in
Bennett and Birkhoff~\cite{BeBi94}. The elements of~$\sA(n)$ are in one-to-one correspondence with the vertices of the \emph{Stasheff polytope}, also called \emph{associahedron} (cf. Stasheff~\cite{Sta63}).

Gr\"atzer asked in Problem~6 of~\cite{FCLat} (see also Problem~I.1 of
Gr\"atzer~\cite{GLT2}) for a characterization of all sublattices of
Tamari lattices. Soon after, McKenzie~\cite{McKe72}
introduced a lattice-theoretical property that later proved itself
fundamental, namely being a \emph{bounded homomorphic image of a free
  lattice} (see Section~\ref{S:NotaTerm} for precise
definitions). Since then the convention of calling such lattices
\emph{bounded lattices} (not to be confused with lattices with a least
and a largest element) has established itself. Among the two simplest
nondistributive lattices~$\sM_3$ and~$\sN_5$
(cf. Figure~\ref{Fig:M3N5}), $\sN_5$ (on the right hand side of the
picture) is bounded while~$\sM_3$ (on the left hand side) is not.

\begin{figure}[htb]
\includegraphics{M3N5}
\caption{The lattices $\sM_3$ and $\sN_5$}
\label{Fig:M3N5}
\end{figure}

Urquhart proved in \cite[Corollary, page~55]{Urqu78} that every Tamari lattice is bounded. Since every sublattice of a finite bounded lattice is bounded, it follows that~$\sM_3$ cannot be embedded into any Tamari lattice. On the other hand~$\sN_5$ is itself a Tamari lattice (namely~$\sA(3)$), and every distributive lattice with~$n$ \jirr\ elements can be embedded into~$\sA(n+1)$ (cf. Markowsky~\cite[page~288]{Mark92}). This led to a plausible conjecture as to which lattices can be embedded into some Tamari lattice, namely:
\begin{center}\em
Can every finite bounded lattice be embedded into some Tamari lattice?
\end{center}
This conjecture was first stated in Geyer~\cite[page~106]{Geyer94}.

Finite bounded lattices are exactly those that can be obtained, starting with the one-element lattice, by applying a finite sequence of instances of the so-called \emph{doubling construction} on closed intervals (see Freese, Je\v{z}ek, and Nation~\cite[Corollary~2.44]{FJN}). At the bottom of the hierarchy of bounded lattices, we can find those obtained by doubling a point (viewed as a one-element interval) in a finite Boolean lattice. Denote by~$\sB(m,n)$ the lattice obtained by doubling the join of~$m$ atoms in an $(m+n)$-atom Boolean lattice (cf. Section~\ref{S:Veg2}). We prove in Corollary~\ref{T:EmbBn01} that~$\sB(m,n)$ embeds into some Tamari lattice if{f} $\min\set{m,n}\leq1$. This settles Geyer's conjecture in the negative.

Our proof involves the construction of an infinite collection of
lattice-theoretical identities, the \emph{Gazpacho identities}
(Section~\ref{S:Gazpacho}). We prove that every Tamari lattice
satisfies all Gazpacho identities (Theorem~\ref{T:GzpinAssoc}). The
simplest Gazpacho identity, \Gzp{1,1}, is renamed~\Veg{1} in
Section~\ref{S:Veg1}, and we find there our first example of a finite
bounded lattice that does not satisfy some Gazpacho identity
(namely~\Veg{1}). This lattice, denoted by~$\sA_{\set{3}}(4)$ and
represented on the right hand side of Figure~\ref{Fig:Cambrian}, is a
retract of the \emph{permutohedron}~$\sP(4)$. (As usual, the permutohedron~$\sP(n)$ on~$n$ letters is defined as the symmetric group of order~$n$ endowed with the weak Bruhat order.) Thus, we infer that the permutohedron~$\sP(4)$ has no lattice embedding into any
Tamari lattice: it does not satisfy the identity~\Veg{1} satisfied by
every Tamari lattice. More generally, we introduce a family of
lattices $\sA_{U}(n)$, for $U \subseteq \set{1,\ldots ,n}$, that are
\emph{retracts}---with respect to the lattice operations---of the
permutohedron~$\sP(n)$, cf. Proposition~\ref{P:PUnjoinfitsPn}. We
verify with Proposition~\ref{P:CambCong} the identity between our
lattices~$\sA_U(n)$ and Reading's \emph{Cambrian lattices of type~A}
\cite{Read06}.  In particular, we characterize in
Corollary~\ref{C:CambCong} the Cambrian lattices of type~A as the
quotients of permutohedra by their \emph{minimal \mirr\ congruences}.

As seen above, another source of finite bounded lattices that cannot be embedded into any Tamari lattice is provided by the lattices~$\sB(m,n)$, for $\min\set{m,n}\geq2$. We introduce in Section~\ref{S:Veg2} a weakening, denoted by~\Veg{2}, of~\Gzp{2,2}, that is not satisfied by~$\sB(2,2)$ (Corollary~\ref{C:B22Config}). Hence~$\sB(2,2)$ is another counterexample to Geyer's conjecture. This lattice is represented in the right hand side of Figure~\ref{Fig:B13B22}.

Our negative embedding result for the permutohedron~$\sP(4)$ raises
the analogue of Geyer's question for permutohedra: namely, \emph{can
  every finite bounded lattice be embedded into some permutohedron}?
Again, it is known that every permutohedron is bounded
(cf. Caspard~\cite{Casp00}). Since every Tamari lattice~$\sA(n)$ is a
sublattice (and, in fact, a \emph{retract}, see
Corollary~\ref{C:PUnjoinfitsPn}) of the corresponding
permutohedron~$\sP(n)$, every sublattice of a Tamari lattice is also a
sublattice of a permutohedron. We disprove the question above in
Theorem~\ref{T:B33}, by proving that \emph{the lattice~$\sB(3,3)$
  cannot be embedded into any permutohedron}. Our proof starts with
the observation that since~$\sB(3,3)$ is subdirectly irreducible, if
it embeds into some permutohedron~$\sP(\ell)$, then it embeds into
some Cambrian lattice~$\sA_U(\ell)$.

Unlike our negative solution of Geyer's conjecture, which involves an
\emph{identity} that holds in all associahedra but not in~$\sB(2,2)$,
our negative embedding result for~$\sB(3,3)$ does not produce an
identity. There is a good reason for this. Namely, $\sB(3,3)$ is,
using terminology from McKenzie~\cite{McKe72}, \emph{splitting} (which
means finite, bounded, and subdirectly irreducible), hence there is a
lattice-theoretical identity that holds in a lattice~$L$
if{f}~$\sB(3,3)$ does not belong to the lattice variety generated
by~$L$. Such an identity is constructed, using known algorithms,
in~\eqref{Eq:Spl1B330}. Then, with the assistance of the software
\texttt{Prover9 - Mace4}, we prove that the Cambrian
lattice~$\sA_U(12)$, for $U=\set{5,6,9,10,11}$, does not
satisfy that identity. In particular, this shows that
although~$\sB(3,3)$ satisfies all the identities satisfied by all
permutohedra (and even all the identities satisfied by~$\sP(12)$), it
cannot be embedded into any permutohedron. Hence, our negative
embedding result for~$\sB(3,3)$ (Theorem~\ref{T:B33}) cannot be proved
\emph{via} a separating identity.

\subsection*{A small discussion about terminology}

In the same manner the lattices~$\sA(n)$ are usually called ``Tamari
lattices'', it would seem natural to call the lattices~$\sP(n)$
``Guilbaud and Rosenstiehl lattices'', after Guilbaud and
Rosenstiehl~\cite{GuRo63} (cf. Section~\ref{S:BasicPerm}). Tradition
decided otherwise, and the lattice~$\sP(n)$ is often\footnote{The lattice~$\sP(n)$ is also often called the ``symmetric group of order~$n$ with the weak Bruhat order''. We will not use that terminology.} called the ``permutohedron on~$n$ letters'' (or,
sometimes, ``permutohedron lattice on~$n$ letters''). We should point
out that the term ``permutohedron'' often denotes either a polytope
(the convex hull of permutation matrices) or a graph (the adjacency
graph of the polytope); the traditional naming for the permutohedron lattice stems from the fact that its undirected covering graph coincides with the adjacency graph of the polytope. 
However our present work is lattice-theoretical and thus we shall use the term ``permutohedron'' only in the lattice-theoretical sense.

Now, according to the same logic, it would have made sense to call ``associahedron'' the lattice~$\sA(n)$. As for permutohedra, this term usually denotes either a polytope or a graph, the latter being the undirected covering graph of the
lattice~$\sA(n)$. However, it follows from work by Reading~\cite{Read06,Read07a} (mainly Theorem~1.3 in the first paper
and Theorem~4.1 in the second paper) that many other lattices share the same undirected covering graph; these are the \emph{Cambrian lattices of type~A}, denoted in the present paper by~$\sA_U(n)$ (cf. Sections~\ref{S:Alterno} and~\ref{S:SubdDec}). In particular, each of those lattices would also deserve to be called
``associahedron''. Because of that possible ambiguity, we shall keep
calling the~$\sA(n)$ ``Tamari lattices''.

\section{Basic notation and terminology}
\label{S:NotaTerm}

We set
\begin{align*}
  [n]&=\set{1,\dots,n}\,,\\
  \cI_n&=\setm{(i,j)\in[n]\times[n]}{i<j}\,, \\
  \Delta_n&=\setm{(i,i)}{i\in[n]}\,,
\end{align*}
for every natural number~$n$.

For a subset $X$ in a poset $P$, we set
 \begin{align*}
 P\dnw X&=\setm{p\in P}{(\exists x\in X)(p\leq x)}\,,\\
 P\ddnw X&=\setm{p\in P}{(\exists x\in X)(p<x)}\,,\\
 P\upw X&=\setm{p\in P}{(\exists x\in X)(p\geq x)}\,;
 \end{align*}
furthermore, we set $P\dnw x=P\dnw\set{x}$, $P\ddnw x=P\ddnw\set{x}$, and $P\upw x=P\upw\set{x}$, for each $x\in X$. For subsets~$X$ and~$Y$ of~$P$, we say that~$X$ \emph{refines}~$Y$, in notation $X\ll Y$, if $X\subseteq P\dnw Y$. For elements $a,b\in P$, we set
 \begin{align*}
 \cc{a,b}&=\setm{p\in P}{a\leq p\leq b}\,,\\
 \co{a,b}&=\setm{p\in P}{a\leq p<b}\,,\\
 \oc{a,b}&=\setm{p\in P}{a<p\leq b}\,,\\
 \oo{a,b}&=\setm{p\in P}{a<p<b}\,. 
 \end{align*}
Here we stray away from the usual convention of denoting intervals in the form~$[a,b)$ or~$(a,b]$ for half-open intervals and~$(a,b)$ for open intervals. The reason for this is that the present paper involves the notations~$(a,b)$ (for pairs of elements), $\oo{a,b}$ (for open intervals), and~$\seq{a,b}$ (for \jirr\ elements in associahedra).

We shall denote by~$P^\op$ the poset with the same underlying set as~$P$ but ordering reversed.

A lattice~$L$ is \emph{\jsd} if $x\vee y=x\vee z$ implies that $x\vee
y=x\vee(y\wedge z)$, for all $x,y,z\in L$. \emph{\Msdy} is defined
dually, and \emph{semidistributivity} is the conjunction of \jsdy\ and
\msdy. A \emph{lattice term} is obtained from variables by repeatedly
composing the meet and the join operations, so for example
$(\sx\wedge\sy)\vee(\sx\vee\sz)$ is a lattice term (we shall use lower
case Sans Serif fonts, such as~$\sx$, $\sy$, $\sz$, $\su$, $\sv$\dots,
for either variables or terms). A (lattice-theoretical) \emph{identity} is a statement of the
form $\su=\sv$ (or $\su\leq\sv$, equivalent to $\su=\su\wedge\sv$) for
lattice terms~$\su$ and~$\sv$. A lattice~$L$ \emph{satisfies the identity~$\su=\sv$} if $\su(\vec{a})=\sv(\vec{a})$ for each
assignment~$\vec{a}$ from the variables of either~$\su$ or~$\sv$ to
the elements of~$L$. A \emph{variety of lattices} is the class of all
lattices that satisfy a given set of identities.
 
A nonzero element~$p$ in~$L$ is \emph{\jirr} if $p=\bigvee X$ implies that $p\in X$ for each finite nonempty subset~$X$ of~$L$. Meet-irreducible elements are defined dually. We denote by~$\J(L)$ (resp., $\M(L)$) the set of all \jirr\ (resp., \mirr) elements of~$L$. A \emph{lower cover} of an element $p\in L$ is an element $x<p$ in~$L$ such that $\oo{x,p}=\es$. Upper covers are defined dually. We denote by~$p_*$ (resp., $p^*$) the lower cover (resp., upper cover) of~$p$ in case it exists and it is unique. For a finite lattice~$L$, $\J(L)$ is exactly the set of all the elements of~$L$ that have a unique lower cover; and dually for~$\M(L)$. In that case, we define binary relations~$\nearrow$ and~$\searrow$ on~$L$ by setting
 \begin{align*}
 x\nearrow y&\ \Longleftrightarrow\ 
 (y\in\M(L)\text{ and }x\nleq y\text{ and }x\leq y^*)\,,\\
 y\searrow x&\ \Longleftrightarrow\ 
 (x\in\J(L)\text{ and }x\nleq y\text{ and }x_*\leq y)\,,
 \end{align*}
for all $x,y\in L$. Then~$L$ is \msd\ if{f} for each $p\in\J(L)$, there exists a largest element $u\in L$ such that $u\searrow p$; this element is then denoted by $\kappa_L(p)$, or $\kappa(p)$ in case~$L$ is understood, and it is \mirr\ (cf. Freese, Je\v{z}ek, and Nation \cite[Theorem~2.56]{FJN}). A similar statement holds for \jsdy\ and $\kappa^\op(u)$, for $u\in\M(L)$, instead of \msdy\ and $\kappa(p)$, for $p\in\J(L)$.

The \emph{join-dependency relation} is the binary relation~$\bD_L$ on~$L$ defined by $a\bD_Lq$, or $a\bD q$ in case~$L$ is understood, by
 \begin{equation}\label{Eq:DefJoinDep}
 a\bD_Lq\ \Leftrightarrow\ \bigl(q\in\J(L)\text{ and }a\neq q\text{ and }
 (\exists x\in L) (a\leq q\vee x\text{ and }a\nleq q_*\vee x)\bigr)\,,
 \end{equation}
for all $a,q\in L$. A \emph{join-cover} of $a \in L$ is a finite subset~$C \subseteq L$ such that $a \leq \bigvee C$.  A join-cover~$C$ of~$a$ is \emph{nontrivial} if $a\notin L\dnw C$.  A join-cover~$C$ of~$a$ is \emph{minimal} if, for every join-cover~$D$ of~$a$, $D \ll C$ implies $C \subseteq D$. It is well-known \cite[Lemma~2.31]{FJN} that, if~$L$ is a finite lattice and $a,q \in L$,
 \begin{equation*}
 a\bD_Lq\ \Leftrightarrow\ \text{there exists a minimal nontrivial
 join-cover $C$ of $a$ such that $q \in C$}.
 \end{equation*}
A surjective homomorphism $f\colon K\twoheadrightarrow L$ is
\emph{bounded} if $f^{-1}\set{x}$ has a least and a largest element,
for each $x\in L$. McKenzie recognized in~\cite{McKe72} the
fundamental role played by lattices which are \emph{bounded homomorphic images of free lattices}. Since then, those lattices have been mostly called \emph{bounded  lattices}. Every bounded lattice is semidistributive (apply \cite[Theorem~2.20]{FJN} and its dual), but the converse fails, even for finite lattices (see the example represented in
\cite[Figure~5.5]{FJN}).
Bounded lattices are called \emph{congruence-uniform} in
Reading~\cite{Read04}, unfortunately the latter terminology is also in
use for lattices in which all congruence classes, with respect to any
given congruence, have the same cardinality, so we shall use here the widely established ``bounded'' terminology here.

A finite lattice~$L$ is bounded if{f} the join-dependency relation is cycle-free on the \jirr\ elements of both~$L$ and~$L^\op$ (cf. \cite[Corollary~2.39]{FJN}). The finite bounded lattices are exactly those that can be obtained by starting from the one-element lattice and then applying a finite succession of the so-called \emph{doubling operation} on closed intervals, cf. Freese, Je\v{z}ek, and Nation \cite[Theorem~2.44]{FJN}.

As shown by the following result from Freese, Je\v{z}ek, and Nation \cite[Lemma~11.10]{FJN}, the relation~$\bD_L$ can be easily obtained from the arrow relations between~$\J(L)$ and~$\M(L)$.

\begin{lemma}\label{L:Arr2D}
Let $p$, $q$ be distinct \jirr\ elements in a finite lattice~$L$. Then $p\bD_Lq$ if{f} there exists $u\in\M(L)$ such that $p\nearrow u\searrow q$.
\end{lemma}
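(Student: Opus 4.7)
The plan is to verify both directions of the equivalence using the characterizations of $\nearrow$ and $\searrow$ together with the standard maximal-element trick in finite lattices.

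For the forward direction, suppose $p\bD_Lq$ with $p,q\in\J(L)$ distinct. Pick a witness $x\in L$ with $p\leq q\vee x$ and $p\nleq q_*\vee x$. Using finiteness, I would choose $u$ maximal among elements satisfying $u\geq q_*\vee x$ and $p\nleq u$. The standard argument then shows $u\in\M(L)$: if $u=a\wedge b$ with $a,b>u$, then maximality of $u$ forces $p\leq a$ and $p\leq b$, hence $p\leq u$, a contradiction. By the same maximality (combined with finiteness), every element strictly above $u$ lies above the unique upper cover $u^*$, and in particular $p\leq u^*$. This gives $p\nearrow u$. On the other side, $u\geq q_*\vee x\geq q_*$; and if we had $q\leq u$, then $p\leq q\vee x\leq q\vee u=u$, contradicting $p\nleq u$. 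Hence $q\nleq u$ and $q_*\leq u$, which is exactly $u\searrow q$.

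For the backward direction, suppose $p\nearrow u\searrow q$ with $p\neq q$; I take the witness $x:=u$ in the definition \eqref{Eq:DefJoinDep}. Since $q\nleq u$ we have $q\vee u>u$, so $q\vee u\geq u^*\geq p$, giving $p\leq q\vee u$. Since $q_*\leq u$ we have $q_*\vee u=u$, so $p\nleq q_*\vee u$ is just $p\nleq u$, which holds by $p\nearrow u$. Together with $p\in\J(L)$ and $p\neq q$, this yields $p\bD_L q$.

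The only genuinely nontrivial ingredient is the maximal-element argument in the forward direction, which produces the meet-irreducible $u$ with a unique upper cover dominating $p$; the rest of the verification is direct chasing of the definitions of $\nearrow$, $\searrow$, and $\bD_L$.
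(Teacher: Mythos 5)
Your proof is correct; the paper itself gives no proof of this lemma, citing it from Freese, Je\v{z}ek, and Nation \cite[Lemma~11.10]{FJN}, and your argument is the standard one used there (maximalize $u$ over $\setm{v\in L}{v\geq q_*\vee x\text{ and }p\nleq v}$ to get a \mirr\ witness, then take $x:=u$ for the converse). Both directions check out against the paper's definitions of $\nearrow$, $\searrow$, and $\bD_L$.
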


\section{Basic concepts about permutohedra}\label{S:BasicPerm}

Throughout this section we shall define permutohedra in a way suited to our needs (Definition~\ref{D:DefOrdPerm}) and relate that definition to those of some earlier works. We fix a natural number~$n$.

A subset~$\bx$ of~$\cI_n$ is \emph{closed} if it is transitive (viewed as a binary relation): that is, $(i,j)\in\bx$ and $(j,k)\in\bx$ implies that $(i,k)\in\bx$, for all $i,j,k\in[n]$. A subset~$\bx$ of~$\cI_n$ is \emph{open} (resp., \emph{clopen}), if $\cI_n\setminus\bx$ is closed (resp., both~$\bx$ and $\cI_n\setminus\bx$ are closed).

\begin{definition}\label{D:DefOrdPerm}
The \emph{permutohedron of index~$n$}, denoted by $\sP(n)$, is the set of all clopen subsets of~$\cI_n$, partially ordered by inclusion.
\end{definition}

The permutohedron was first defined in terms of the group~$\fS_n$ of
all permutations of~$[n]$, for each positive integer~$n$. We set
$\Inv(\sigma)=\setm{(i,j)\in\cI_n}{\sigma^{-1}(i)>\sigma^{-1}(j)}$ for
each $\sigma\in\fS_n$, the \emph{set of inversions} of~$\sigma$.  The
following result can be traced back to Guilbaud and Rosenstiehl
\cite[Th\'eor\`eme~2]{GuRo63}; see also Exercise~16, page~225 in
Bourbaki~\cite{LieBourb} (where it is established in the more general
context of finite Coxeter groups), Yanagimoto and Okamoto
\cite[Proposition~2.2]{YaOk69}.

\begin{lemma}\label{L:InvSepPerm}
The assignment $\sigma\mapsto\Inv(\sigma)$ defines a bijection from~$\fS_n$ onto the set of all clopen subsets of~$\cI_n$, for every positive integer~$n$.
\end{lemma}

It follows from Lemma~\ref{L:InvSepPerm} that one can define a partial ordering on~$\fS_n$ by setting
 \[
 \sigma\leq\tau\ \Leftrightarrow\ \Inv(\sigma)\subseteq\Inv(\tau)\,,
 \text{ for all }\sigma,\tau\in\fS_n\,,
 \]
 and this partial ordering is isomorphic to the permutohedron~$\sP(n)$
 (cf. Definition~\ref{D:DefOrdPerm}). The partial ordering defined above on~$\sP(n)$ turns out to be the well-known \emph{weak Bruhat ordering} on the symmetric group, see for example Bennett and Birkhoff~\cite[Section~5]{BeBi94}.

 The description of
 permutations \emph{via} clopen sets of inversions is a particular
 case of a more general construction, namely the description of the
 regions of a hyperplane arrangement \emph{via} bi-closed sets of
 hyperplanes. For details, we refer the reader to Bj\"orner, Edelman,
 and Ziegler~\cite{BEZ}, in particular in the Example at the bottom of
 page~269, also in Theorem~5.5, of that paper.

Since every intersection of closed sets is closed, every union of open sets is open. For a subset~$\bx$ of~$\cI_n$, we shall denote by $\Int(\bx)$ (resp., $\Cl(\bx)$) the largest open subset of~$\bx$ (resp., the least closed set containing~$\bx$). Hence~$\Cl(\bx)$ is the transitive closure of~$\bx$, while
 \begin{multline}\label{Eq:ExprInt(X)}
 \Int(\bx)=\bigl\{(i,j)\in\cI_n\mid (\forall m>0)
 (\forall i=s_0<s_1<\cdots<s_m=j)\\
 (\exists l<m)\bigl((s_l,s_{l+1})\in\bx\bigr)\bigr\}\,.
 \end{multline}
 The following lemma is crucial in establishing
 Proposition~\ref{P:Perm(n)Latt}. It is implicit in the proof of
 Guilbaud and Rosenstiehl \cite[Section~VI.A]{GuRo63}, Yanagimoto and
 Okamoto \cite[Theorem~2.1]{YaOk69}, and it is stated explicitly in
 Santocanale \cite[Lemma~2.6]{Sant07}.

\begin{lemma}\label{Int(closed)}
The set $\Cl(\bx)$ is open, for each open $\bx\subseteq\cI_n$. Dually, the set $\Int(\bx)$ is closed, for each closed $\bx\subseteq\cI_n$.
\end{lemma}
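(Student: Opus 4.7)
The plan is to reduce to the second (dual) assertion, then attack it with the explicit formula~\eqref{Eq:ExprInt(X)}. The two statements are related by complementation in $\cI_n$: for any $\bx\subseteq\cI_n$, the set $\Cl(\bx)$ is the complement of the largest open subset of $\cI_n\setminus\bx$, that is,
\[
\Cl(\bx)=\cI_n\setminus\Int(\cI_n\setminus\bx)\,.
\]
Granting the second assertion, if $\bx$ is open then $\cI_n\setminus\bx$ is closed, whence $\Int(\cI_n\setminus\bx)$ is closed, and its complement $\Cl(\bx)$ is open. So it suffices to prove that $\Int(\bx)$ is closed whenever $\bx$ is closed.

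For this, assume that $\bx$ is transitive and let $(i,j),(j,k)\in\Int(\bx)$ with $i<j<k$; I aim to show $(i,k)\in\Int(\bx)$ by applying~\eqref{Eq:ExprInt(X)} to an arbitrary chain $i=s_0<s_1<\cdots<s_m=k$. I split into two cases according to whether $j$ appears in the chain. If $j=s_r$ for some $r$, the chain decomposes into $i=s_0<\cdots<s_r=j$ and $j=s_r<\cdots<s_m=k$, and applying~\eqref{Eq:ExprInt(X)} to $(i,j)$ and the first subchain (or to $(j,k)$ and the second) immediately produces a consecutive pair of the original chain lying in $\bx$.

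The remaining case is the core of the argument: there exists $r$ with $s_r<j<s_{r+1}$, giving chains $i=s_0<\cdots<s_r<j$ from $i$ to $j$ and $j<s_{r+1}<\cdots<s_m=k$ from $j$ to $k$. Applying the hypothesis $(i,j)\in\Int(\bx)$ to the first chain, either some original consecutive pair $(s_l,s_{l+1})$ with $l<r$ lies in $\bx$---in which case we are done---or the ``bridging'' pair $(s_r,j)$ lies in $\bx$. Likewise, applying $(j,k)\in\Int(\bx)$ to the second chain, either some $(s_l,s_{l+1})$ with $l\geq r+1$ lies in $\bx$, or $(j,s_{r+1})\in\bx$. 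The only configuration not settling the matter directly is $(s_r,j),(j,s_{r+1})\in\bx$; but then transitivity of~$\bx$ yields $(s_r,s_{r+1})\in\bx$, which is a consecutive pair of the original chain.

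The main (and only real) obstacle is this last configuration, where both sub-chain witnesses are the bridging edges at $j$: it is precisely here that the closedness hypothesis on $\bx$ enters, via one application of transitivity. Everything else is a bookkeeping exercise with~\eqref{Eq:ExprInt(X)} and the complementation duality, so no further subtleties are expected.
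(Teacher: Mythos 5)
Your proof is correct. The complementation identity $\Cl(\bx)=\cI_n\setminus\Int(\cI_n\setminus\bx)$ correctly reduces the first assertion to the second, and the case analysis on whether $j$ occurs in the chain $i=s_0<\cdots<s_m=k$ is complete, with the degenerate positions $r=0$ and $r=m-1$ handled automatically; the single use of transitivity of $\bx$ on the two bridging pairs $(s_r,j)$ and $(j,s_{r+1})$ is exactly where the hypothesis is needed. The paper itself does not prove this lemma but defers to \cite[Lemma~2.6]{Sant07}, so your argument supplies a self-contained verification consistent with the formula~\eqref{Eq:ExprInt(X)} stated just before the lemma.
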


{}From Lemma~\ref{Int(closed)} it follows that for all $\bx,\by\in\sP(n)$, there exists a largest element of~$\sP(n)$ contained in~$\bx\cap\by$, namely $\Int(\bx\cap\by)$. Dually, there exists a least element of~$\sP(n)$ that contains $\bx\cup\by$, namely $\Cl(\bx\cup\by)$. Therefore, we get the following result, first established in Guilbaud and Rosenstiehl \cite[Section~VI.A]{GuRo63}, see also Yanagimoto and Okamoto \cite[Theorem~2.1]{YaOk69}.

\begin{proposition}\label{P:Perm(n)Latt}
The poset~$\sP(n)$ is a lattice. The meet and the join in~$\sP(n)$ are given by
 \[
 \bx\wedge\by=\Int(\bx\cap\by)\,,\quad\bx\vee\by=\Cl(\bx\cup\by)\,,
 \]
for all $\bx,\by\in\sP(n)$.
\end{proposition}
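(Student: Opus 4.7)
The plan is to exploit Lemma~\ref{Int(closed)} together with the trivial closure properties of open and closed subsets of~$\cI_n$ under the set-theoretic operations. First I would record the ambient facts: any intersection of closed sets is closed (so $\bx\cap\by$ is closed whenever $\bx,\by\in\sP(n)$), and dually any union of open sets is open (so $\bx\cup\by$ is open). These facts, noted in the paragraph preceding Lemma~\ref{Int(closed)}, together with the defining universal properties of $\Int$ and $\Cl$, will do most of the work.

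For the meet, I would argue that $\Int(\bx\cap\by)$ is clopen: it is open by definition of the interior, and, since $\bx\cap\by$ is closed, its interior is again closed by Lemma~\ref{Int(closed)}. Hence $\Int(\bx\cap\by)\in\sP(n)$, and by construction it is contained in both~$\bx$ and~$\by$. For the universal property, given any $\bz\in\sP(n)$ with $\bz\subseteq\bx$ and $\bz\subseteq\by$, one has $\bz\subseteq\bx\cap\by$; since $\bz$ is open and $\Int(\bx\cap\by)$ is the \emph{largest} open subset of $\bx\cap\by$, it follows that $\bz\subseteq\Int(\bx\cap\by)$. Thus $\Int(\bx\cap\by)$ is the infimum of~$\bx$ and~$\by$ in~$\sP(n)$.

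The join argument is exactly dual. The set $\bx\cup\by$ is open, so by Lemma~\ref{Int(closed)} its transitive closure $\Cl(\bx\cup\by)$ remains open; it is closed by definition, so it lies in $\sP(n)$. It obviously contains~$\bx$ and~$\by$, and if $\bz\in\sP(n)$ contains both, then $\bz\supseteq\bx\cup\by$ and, since~$\bz$ is closed and $\Cl(\bx\cup\by)$ is the \emph{least} closed set containing $\bx\cup\by$, we get $\bz\supseteq\Cl(\bx\cup\by)$. This establishes that $\sP(n)$ is a lattice with the stated formulas for meet and join.

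Essentially no obstacle remains once Lemma~\ref{Int(closed)} is granted: the whole proof is the observation that $\Int$ and $\Cl$ stabilize the clopen subsets in the appropriate sense, combined with their standard universal properties. The only mildly subtle point is remembering that the interior of a \emph{closed} set must itself be closed (and dually), which is exactly what Lemma~\ref{Int(closed)} provides; without that input one would only obtain the infimum and supremum within the lattice of all subsets of~$\cI_n$, not within~$\sP(n)$ itself.
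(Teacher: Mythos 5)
Your proposal is correct and follows exactly the paper's route: the paper derives the proposition from Lemma~\ref{Int(closed)} together with the facts that intersections of closed sets are closed and unions of open sets are open, obtaining $\Int(\bx\cap\by)$ as the largest element of $\sP(n)$ below $\bx$ and $\by$ and $\Cl(\bx\cup\by)$ as the least element above them. You have merely written out the universal-property verifications that the paper leaves implicit.
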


Hence, the permutohedron~$\sP(n)$ it is often called the \emph{lattice of all permutations of~$n$ letters}.

\begin{proposition}\label{P:Perm(n)compl}
The lattice~$\sP(n)$ is complemented. Moreover, the assignment $\bx\mapsto\bx^\cpl=\cI_n\setminus\bx$ defines an involutive dual automorphism of~$\sP(n)$ that sends each clopen subset~$\bx$ to a lattice-theoretical complement of~$\bx$ in~$\sP(n)$.
\end{proposition}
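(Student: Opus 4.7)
The plan is to unpack the definition of clopen and observe that everything we need is built into the symmetry of the definition. First, I would verify that $\bx \mapsto \bx^\cpl$ does send $\sP(n)$ into itself: by definition, $\bx \subseteq \cI_n$ is clopen if{f} both $\bx$ and $\cI_n\setminus\bx = \bx^\cpl$ are closed. This condition is symmetric in $\bx$ and $\bx^\cpl$, so $\bx^\cpl$ is clopen whenever $\bx$ is.

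Next, I would observe that set-theoretical complementation, regarded on the powerset of~$\cI_n$, is involutive and order-reversing; hence its restriction to~$\sP(n)$ is an involutive dual automorphism of the poset~$\sP(n)$. Since~$\sP(n)$ is a lattice (by Proposition~\ref{P:Perm(n)Latt}), this automatically makes $\bx\mapsto\bx^\cpl$ a dual lattice automorphism, so in particular $(\bx\wedge\by)^\cpl=\bx^\cpl\vee\by^\cpl$ and dually.

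Finally, I would check the complementation identities. Both~$\es$ and~$\cI_n$ are trivially clopen (the empty relation is vacuously transitive, and $\cI_n$ itself is transitive since $i<j<k$ implies $i<k$), so they form the least and greatest elements of~$\sP(n)$. Using the explicit formulas for meet and join from Proposition~\ref{P:Perm(n)Latt}, one computes
\[
\bx \wedge \bx^\cpl = \Int(\bx \cap \bx^\cpl) = \Int(\es) = \es\,, \qquad
\bx \vee \bx^\cpl = \Cl(\bx \cup \bx^\cpl) = \Cl(\cI_n) = \cI_n\,,
\]
which shows that~$\bx^\cpl$ is a lattice-theoretical complement of~$\bx$.

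No step here poses any real obstacle; the entire content of the proposition is a straightforward consequence of the symmetry in the definition of a clopen subset of~$\cI_n$ and the formulas for meet and join already established in Proposition~\ref{P:Perm(n)Latt}. If anything, the only point worth emphasizing is that we use that~$\es$ and~$\cI_n$ themselves lie in~$\sP(n)$, so that the identities above are genuinely computed inside the lattice~$\sP(n)$ rather than merely in the ambient powerset.
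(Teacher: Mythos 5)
Your proof is correct, and it is precisely the routine argument the paper leaves implicit (the proposition is stated there without proof): symmetry of the clopen condition, involutivity and order-reversal of set complementation, and the meet/join formulas of Proposition~\ref{P:Perm(n)Latt} applied to $\bx\cap\bx^\cpl=\es$ and $\bx\cup\bx^\cpl=\cI_n$. Your closing remark that $\es$ and $\cI_n$ are themselves clopen, hence the bounds of $\sP(n)$, is exactly the right detail to make explicit.
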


The least element of $\sP(n)$ is~$\es$, it is the set of inversions of the identity permutation. The largest element of~$\sP(n)$ is~$\cI_n$; it is the set of inversions of the permutation $i\mapsto n+1-i$.

\section{Join-irreducible elements in the permutohedron}\label{S:JirrPerm}

Throughout this section we shall fix a natural number~$n$. We shall describe the join- and \mirr\ elements of~$\sP(n)$, state a few lemmas needed for further sections, and indicate how they imply Caspard's result that all permutohedra are bounded.

\begin{notation}\label{Not:DabU}
We set $\cF_n=\setm{(a,b,U)}{(a,b)\in\cI_n\,,\ U\subseteq[a,b]\,,\ a\notin U\,,\text{ and }b\in U}$, and, for each $(a,b,U)\in\cF_n$, we set
 \[
 \pji{a}{b}{U}=\cI_n\cap\bigl(([a,b]\setminus U)\times U\bigr)\,.
 \]
\end{notation}

The following description of the \jirr\ elements in the lattice~$\sP(n)$ is contained in Santocanale \cite[Section~4]{Sant07}, see in particular Example~4.10 of that paper. By using Proposition~\ref{P:Perm(n)compl}, the description of \mirr\ elements follows.

\begin{lemma}\label{L:DescJ(Permn)}
  The \jirr\ \pup{resp., \mirr} elements of $\sP(n)$ are exactly those
  of the form $\pji{a}{b}{U}$ \pup{resp., $\pji{a}{b}{U}^\cpl$}, for
  $(a,b,U)\in\cF_n$.
\end{lemma}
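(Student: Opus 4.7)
The plan is to verify directly that each $\pji{a}{b}{U}$ is clopen and join-irreducible, show conversely that every element of $\J(\sP(n))$ has this form, and deduce the description of $\M(\sP(n))$ via the involutive dual automorphism $\bx\mapsto\bx^\cpl$ from Proposition~\ref{P:Perm(n)compl}.

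Clopenness of $\pji{a}{b}{U}$ is immediate: closedness holds vacuously, since $(i,j),(j,k)\in\pji{a}{b}{U}$ would force $j\in U$ and $j\notin U$; openness follows from~\eqref{Eq:ExprInt(X)}, for given $(i,k)\in\pji{a}{b}{U}$ and $i<r<k$ one has $r\in[a,b]$, and either $r\in U$ (whence $(i,r)\in\pji{a}{b}{U}$) or $r\notin U$ (whence $(r,k)\in\pji{a}{b}{U}$). For join-irreducibility I first verify, by a case analysis on transitivity pairs involving $(a,b)$, that $\pji{a}{b}{U}\setminus\set{(a,b)}$ is itself clopen. I then claim that this set is the \emph{largest} proper clopen subset of $\pji{a}{b}{U}$: given clopen $\by$ with $(a,b)\in\by\subseteq\pji{a}{b}{U}$ and arbitrary $(i,j)\in\pji{a}{b}{U}$ with $a<i<j<b$, openness of $\by$ at $(a,b)$ with intermediate point $i$ forces $(i,b)\in\by$ (since $(a,i)\notin\pji{a}{b}{U}$ because $i\notin U$), and openness at $(i,b)$ with intermediate point $j$ forces $(i,j)\in\by$ (since $(j,b)\notin\pji{a}{b}{U}$ because $j\in U$); the boundary cases $i=a$ or $j=b$ are analogous. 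So $\by=\pji{a}{b}{U}$, and $\pji{a}{b}{U}$ has unique lower cover $\pji{a}{b}{U}\setminus\set{(a,b)}$, hence is join-irreducible.

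For the converse, let $\bx\in\J(\sP(n))$. Using that $\sP(n)$ is graded by the cardinality function $|\cdot|$ (a classical fact for the weak Bruhat order on $\fS_n$, transferred through Proposition~\ref{P:DefOrdPerm}), the unique lower cover of $\bx$ has the form $\bx\setminus\set{(a,b)}$ for a unique $(a,b)\in\cI_n$. Set $U:=\set{b}\cup\setm{j\in\oo{a,b}}{(a,j)\in\bx}$, so that $(a,b,U)\in\cD_n$. The inclusion $\pji{a}{b}{U}\subseteq\bx$ is handled as follows: given $(i,j)\in\pji{a}{b}{U}$ with $i>a$, we have $(a,j)\in\bx$ and $(a,i)\notin\bx$ (as $i\notin U$), and openness of $\bx$ at $(a,j)$ with intermediate point $i$ forces $(i,j)\in\bx$. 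The reverse inclusion $\bx\subseteq\pji{a}{b}{U}$ is the main obstacle: one supposes $(i,j)\in\bx\setminus\pji{a}{b}{U}$ and derives a contradiction by verifying that $\bx\setminus\set{(i,j)}$ is again clopen, giving a second lower cover of $\bx$. Closedness of the removal is automatic, while openness reduces to three exclusion conditions: no $q>j$ with $(i,q)\in\bx$ and $(j,q)\notin\bx$, no $p<i$ with $(p,j)\in\bx$ and $(p,i)\notin\bx$, and no intermediate $k$ with $(i,k),(k,j)\in\bx$. Each of these, if present, would identify $(i,j)$ with $(a,b)$ through the extremal role of $(a,b)$ in producing the lower cover $\bx\setminus\set{(a,b)}$, contradicting the choice of $(i,j)$ outside $\pji{a}{b}{U}$.

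Finally, the meet-irreducible claim follows from Proposition~\ref{P:Perm(n)compl}: since $\bx\mapsto\bx^\cpl$ is a lattice anti-automorphism of $\sP(n)$, it maps $\J(\sP(n))$ bijectively onto $\M(\sP(n))$ and sends $\pji{a}{b}{U}$ to $\pji{a}{b}{U}^\cpl$.
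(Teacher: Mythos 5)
The paper gives no proof of this lemma---it cites Santocanale \cite[Section~4]{Sant07} and, exactly as you do, deduces the \mirr\ half from the dual automorphism of Proposition~\ref{P:Perm(n)compl}---so your attempt has to be judged on its own. Your forward direction is correct and complete: $\pji{a}{b}{U}$ and $\pji{a}{b}{U}\setminus\set{(a,b)}$ are clopen, and your two-step openness argument (first at $(a,b)$ with intermediate point $i$, then at $(i,b)$ with intermediate point $j$) does show that every clopen $\by$ with $(a,b)\in\by\subseteq\pji{a}{b}{U}$ equals $\pji{a}{b}{U}$, so $\pji{a}{b}{U}$ has a unique lower cover and is \jirr. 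The inclusion $\pji{a}{b}{U}\subseteq\bx$ in the converse is also correctly argued.

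The converse, however, has a genuine gap at its last step. To show that $\bx\setminus\set{(i,j)}$ is clopen for an arbitrary $(i,j)\in\bx\setminus\pji{a}{b}{U}$ you must exclude three configurations (and note that the ``intermediate $k$ with $(i,k),(k,j)\in\bx$'' configuration is precisely the failure of \emph{closedness} of the removal, so ``closedness of the removal is automatic'' is not right, though you do list all three). The sentence ``each of these, if present, would identify $(i,j)$ with $(a,b)$ through the extremal role of $(a,b)$'' is an assertion, not an argument: nothing you have established links, say, the existence of $q>j$ with $(i,q)\in\bx$ and $(j,q)\notin\bx$ to the pair $(a,b)$ or to the definition of $U$. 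Fortunately you have already proved everything needed for a short, correct finish: since $\bx$ is \jirr\ with unique lower cover $\bx_*=\bx\setminus\set{(a,b)}$, every clopen \emph{proper} subset of $\bx$ lies below $\bx_*$ and hence omits $(a,b)$; but $\pji{a}{b}{U}$ is clopen, contained in $\bx$, and contains $(a,b)$ by construction, so it cannot be a proper subset, whence $\bx=\pji{a}{b}{U}$. This replaces the entire removal analysis (and even dispenses with gradedness: any $(a,b)\in\bx\setminus\bx_*$ would do).
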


\begin{lemma}\label{L:jirrSingl}
The equality $\pji{a}{b}{U}_*=\pji{a}{b}{U}\setminus\set{(a,b)}$ holds, for each triple $(a,b,U)\in\cF_n$.
\end{lemma}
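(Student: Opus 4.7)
The plan is to identify the lower cover directly: set $\by:=\pji{a}{b}{U}\setminus\set{(a,b)}$ and show that $\by$ is a clopen subset of $\cI_n$ which is covered by $\bx:=\pji{a}{b}{U}$ in $\sP(n)$. Since Lemma~\ref{L:DescJ(Permn)} tells us that $\bx$ is \jirr, its lower cover is unique, hence equal to $\by$.

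The first observation is that $\bx$ has no composable pairs: indeed, if $(i,j),(j,k)\in\bx$, then $(i,j)\in\bx$ forces $j\in U$ while $(j,k)\in\bx$ forces $j\in[a,b]\setminus U$, a contradiction. Consequently $\bx$ (and any subset, in particular $\by$) is trivially transitive. So $\by$ is closed.

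The main step is verifying that $\cI_n\setminus\by=(\cI_n\setminus\bx)\cup\set{(a,b)}$ is closed. This is where I would spend the care: Lemma~\ref{L:DescJ(Permn)} already gives that $\cI_n\setminus\bx$ is transitive, so it suffices to handle the new pairs created by adjoining $(a,b)$. Concretely, given $(i,j),(j,k)\in\cI_n\setminus\by$, I split into three cases. If neither equals $(a,b)$ then transitivity of $\cI_n\setminus\bx$ yields $(i,k)\in\cI_n\setminus\bx\subseteq\cI_n\setminus\by$. If $(i,j)=(a,b)$, so $j=b$ and $k>b$, then the candidate $(a,k)$ would have to lie in $\bx$ to fail to lie in $\cI_n\setminus\by$; but then we would still conclude $(a,k)\notin\by$ unless $(a,k)=(a,b)$, which is impossible since $k>b$. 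The subtler subclaim here is that $(b,k)\notin\bx$, which holds because $b\in U$ excludes $b$ from $[a,b]\setminus U$. Symmetrically, the case $(j,k)=(a,b)$ uses $i<a$ to force $i\notin[a,b]$, hence $(i,b)\notin\bx$.

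Once $\by\in\sP(n)$ is established, the covering relation is immediate: $\bx\setminus\by=\set{(a,b)}$ has exactly one element, so any clopen $\bz$ with $\by\subsetneq\bz\subseteq\bx$ must contain $(a,b)$ and therefore coincide with $\bx$. Hence $\by$ is a lower cover of $\bx$, and the \jirry\ of $\bx$ (Lemma~\ref{L:DescJ(Permn)}) yields $\bx_*=\by$. The only real obstacle is the transitivity verification for $\cI_n\setminus\by$, which is a short but genuine case analysis; everything else is formal bookkeeping.
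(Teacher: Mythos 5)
Your strategy is the natural one, and the paper itself offers nothing to compare it against: Lemma~\ref{L:jirrSingl} is stated there without proof, as a consequence of the description of the \jirr\ elements taken from Santocanale's paper. Your reduction is correct and complete in outline: show that $\by:=\pji{a}{b}{U}\setminus\set{(a,b)}$ is clopen, observe that no element of $\sP(n)$ lies strictly between $\by$ and $\bx:=\pji{a}{b}{U}$, and invoke the uniqueness of the lower cover of a \jirr\ element in a finite lattice. The closedness argument (no composable pairs inside $\bx$) and the case $(j,k)=(a,b)$ of the openness check are both right.

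The one step that does not work as written is the case $(i,j)=(a,b)$, $(j,k)=(b,k)$ with $k>b$. What must be shown is $(a,k)\notin\bx$. Your sentence ``we would still conclude $(a,k)\notin\by$ unless $(a,k)=(a,b)$'' has the implication backwards --- if $(a,k)\in\bx$ and $(a,k)\neq(a,b)$ then $(a,k)$ \emph{does} lie in $\by$, which is exactly the situation to be excluded --- so no contradiction is derived. Moreover, the ``subtler subclaim'' you isolate, namely $(b,k)\notin\bx$, concerns the hypothesis pair rather than the conclusion pair and cannot be used here, since $(a,b)$ itself belongs to $\bx$ and transitivity of $\cI_n\setminus\bx$ is therefore unavailable for this composition. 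The correct justification is the exact mirror of your (correct) treatment of the case $(j,k)=(a,b)$: from $k>b$ we get $k\notin[a,b]$, hence $k\notin U$ because $U\subseteq[a,b]$, hence $(a,k)\notin\cI_n\cap\bigl(([a,b]\setminus U)\times U\bigr)=\bx$. With that one sentence substituted, the proof is complete.
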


Characterizations of the table of~$\sP(n)$, that is, the order between
\jirr\ elements and \mirr\ elements, and of the relations~$\searrow$
and~$\nearrow$ appear in Duquenne and Cherfouh \cite[Lemma~9]{DuCh94}
and Caspard \cite[Proposition~2]{Casp99}, respectively.  The previous
description of the \jirr\ elements by triples from $\cF_n$ yields the
following lemma in a straightforward way.

\begin{lemma}\label{L:ArrPrecJirr}
  Let $(a,b,U)\in\cF_n$. Set $\widetilde{U}=(\oc{a,b}\setminus
  U)\cup\set{b}$. Then $\bx\searrow\pji{a}{b}{U}$ if{f}
  $\bx\leq\pji{a}{b}{\widetilde{U}}^\cpl$, for each $\bx\in\sP(n)$.
\end{lemma}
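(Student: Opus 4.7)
The plan is to translate each side of the equivalence into an explicit combinatorial condition on the clopen set $\bx$ and then verify the translation directly. First I would check that $(a,b,\widetilde U)\in\cD_n$: indeed $\widetilde U=(\oc{a,b}\setminus U)\cup\set{b}\subseteq\oc{a,b}\subseteq[a,b]$, $a\notin\widetilde U$, and $b\in\widetilde U$. By Lemma~\ref{L:DescJ(Permn)}, the set $M:=\pji{a}{b}{\widetilde U}^\cpl$ is then a \mirr\ element of $\sP(n)$, and the condition $\bx\leq M$ amounts to $\bx\cap\pji{a}{b}{\widetilde U}=\es$: that is, no pair $(i,j)\in\bx$ has $i\in\set{a}\cup(U\setminus\set{b})$ and $j\in(\oc{a,b}\setminus U)\cup\set{b}$. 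Dually, by Lemma~\ref{L:jirrSingl}, $\bx\searrow\pji{a}{b}{U}$ rewrites as $\pji{a}{b}{U}\setminus\set{(a,b)}\subseteq\bx$ together with $(a,b)\notin\bx$.

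For the forward implication, I would suppose $\bx\searrow\pji{a}{b}{U}$ and, for contradiction, pick $(i,j)\in\bx\cap\pji{a}{b}{\widetilde U}$. I split into the four subcases according to whether $i=a$ or $i\in U\setminus\set{b}$, and whether $j=b$ or $j\in\oc{a,b}\setminus U$. In each nontrivial subcase, the auxiliary pairs $(a,i)$ (when $i\neq a$) and $(j,b)$ (when $j\neq b$) lie in $\pji{a}{b}{U}\setminus\set{(a,b)}$: indeed, if $i\in U\setminus\set{b}$, then $a\notin U$ and $i\in U$ give $(a,i)\in\pji{a}{b}{U}$, and symmetrically for $(j,b)$. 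Hence by hypothesis they belong to $\bx$; chaining them with $(i,j)$ through the transitivity (closedness) of $\bx$ forces $(a,b)\in\bx$, a contradiction.

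For the reverse implication, I would verify directly that $M$ itself satisfies $M\searrow\pji{a}{b}{U}$. If $(i,j)\in\pji{a}{b}{U}\cap\pji{a}{b}{\widetilde U}$, then $i\in[a,b]\setminus U$ excludes $i\in U\setminus\set{b}$, forcing $i=a$; and $j\in U$ excludes $j\in\oc{a,b}\setminus U$, forcing $j=b$. Hence $\pji{a}{b}{U}\setminus\set{(a,b)}$ is disjoint from $\pji{a}{b}{\widetilde U}$ and thereby contained in $M$, while $(a,b)\in\pji{a}{b}{\widetilde U}$ gives $(a,b)\notin M$. This yields $M\searrow\pji{a}{b}{U}$, and, together with the forward direction, identifies $M$ as the largest such $\bx$; in the \msd\ lattice $\sP(n)$ this is exactly $\kappa_{\sP(n)}(\pji{a}{b}{U})$, from which the stated equivalence for arbitrary $\bx\in\sP(n)$ follows.

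The main obstacle I foresee is the bookkeeping of the four subcases in the forward direction. The subcase $i\in U\setminus\set{b}$ with $j\in\oc{a,b}\setminus U$ (so $a<i<j<b$) requires invoking closedness of $\bx$ twice, first to obtain $(a,j)\in\bx$ from $(a,i)$ and $(i,j)$, and then $(a,b)\in\bx$ from $(a,j)$ and $(j,b)$; the other three subcases require only a single application of closedness, but each demands a careful check that the auxiliary pair being used is distinct from $(a,b)$ and therefore genuinely belongs to $\pji{a}{b}{U}_*$.
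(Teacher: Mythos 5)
Your argument correctly supplies the details that the paper leaves implicit (the paper offers no written proof of this lemma, asserting only that it follows ``in a straightforward way'' from the description of the \jirr\ elements by triples from $\cD_n$). The verification that $(a,b,\widetilde U)\in\cD_n$, the computation $[a,b]\setminus\widetilde U=\set{a}\cup(U\setminus\set{b})$, the four-case transitivity argument for the forward direction (including the double application of closedness of~$\bx$ in the case $a<i<j<b$), and the disjointness computation $\pji{a}{b}{U}\cap\pji{a}{b}{\widetilde U}=\set{(a,b)}$ for the reverse direction are all sound.

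One caveat concerns your final inference. From the forward direction together with $\pji{a}{b}{\widetilde U}^\cpl\searrow\pji{a}{b}{U}$ you correctly conclude that $\pji{a}{b}{\widetilde U}^\cpl=\kappa_{\sP(n)}(\pji{a}{b}{U})$, but the ``stated equivalence for arbitrary $\bx$'' does not follow from this: $\bx\leq\kappa(\bp)$ does not imply $\bp_*\leq\bx$. Indeed the set $\setm{\bx}{\bx\searrow\bp}$ is the interval $\cc{\bp_*,\kappa(\bp)}$, not the principal ideal below $\kappa(\bp)$; taking $\bx=\es$ and $\bp=\pji{1}{3}{\set{3}}$ in $\sP(3)$ (so $\bp_*=\set{(2,3)}\neq\es$) gives $\bx\leq\pji{1}{3}{\widetilde U}^\cpl$ without $\bx\searrow\bp$. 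This defect, however, lies in the lemma's own formulation rather than in your work: the right-to-left implication, read literally for all $\bx\in\sP(n)$, is false, while it does hold for the elements to which the paper actually applies it (the \mirr\ elements $\pji{c}{d}{V}^\cpl$, and the identification of $\kappa_{\sP(n)}(\pji{a}{b}{U})$). The substantive content — that $\pji{a}{b}{\widetilde U}^\cpl$ is the largest $\bx$ with $\bx\searrow\pji{a}{b}{U}$ — is exactly what you prove, so the only repair needed is to replace your closing sentence by that statement rather than claim the literal biconditional for arbitrary~$\bx$.
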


Consequently, we obtain that for each $(c,d,V)\in\cF_n$,
$\pji{c}{d}{V}^\cpl$ lies above $\pji{a}{b}{U}_*$ but not above
$\pji{a}{b}{U}$ if{f} $\pji{a}{b}{\widetilde{U}}\leq\pji{c}{d}{V}$,
that is, $c\leq a<b\leq d$ and $\widetilde{U}=V\cap[a,b]$. It follows
that $\kappa_{\sP(n)}(\pji{a}{b}{U})=\pji{a}{b}{\widetilde{U}}^\cpl$.
By using \cite[Theorem~2.56]{FJN}, it follows that~$\sP(n)$ is
meet-semidistributive. Since~$\sP(n)$ is self-dual, we obtain that it is
semidistributive. This result was first obtained simultaneously by
Duquenne and Cherfouh \cite[Theorem~3]{DuCh94} and Le Conte de
Poly-Barbut \cite[Lemme~9]{LeCPB94} (in the latter paper the result
was extended to all \emph{Coxeter lattices}).

We set
 \begin{equation}\label{Eq:RestrSubset}
 U\res[i,j]=(U\cap\oc{i,j})\cup\set{j}\,,\quad\text{for all }U\subseteq[n]
 \text{ and all }(i,j)\in\cI_n\,.
 \end{equation}
By using Lemma~\ref{L:Arr2D} together with Lemmas~\ref{P:Perm(n)compl} and~\ref{L:ArrPrecJirr}, we obtain the following characterization of the join-dependency relation on~$\sP(n)$. This characterization was obtained in Santocanale \cite[Example~4.10]{Sant07}.

\begin{proposition}\label{P:DrelPerm}
  Let $(a,b,U),(c,d,V)\in\cF_n$. Then the relation
  $\pji{a}{b}{U}\bD\pji{c}{d}{V}$ holds in $\sP(n)$ if{f}
  $[c,d]\subsetneqq[a,b]$ and $V=U\res[c,d]$.
\end{proposition}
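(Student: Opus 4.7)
The plan is to invoke Lemma~\ref{L:Arr2D}, which translates $\pji{a}{b}{U}\bD\pji{c}{d}{V}$ into the existence of some $u\in\M(\sP(n))$ satisfying $\pji{a}{b}{U}\nearrow u\searrow\pji{c}{d}{V}$. By Lemma~\ref{L:DescJ(Permn)} this $u$ must be of the form $\pji{e}{f}{W}^\cpl$ with $(e,f,W)\in\cD_n$, so the problem becomes characterizing those triples $(e,f,W)$ that make both arrow relations hold.

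For $\searrow$, Lemma~\ref{L:ArrPrecJirr} gives directly $\pji{e}{f}{W}^\cpl\searrow\pji{c}{d}{V}$ iff $\pji{c}{d}{\widetilde{V}}\leq\pji{e}{f}{W}$. For $\nearrow$, I would use the self-duality of $\sP(n)$ via complementation (Proposition~\ref{P:Perm(n)compl}): for an order anti-automorphism $\phi$ of a finite lattice, chasing through the definitions of upper/lower cover and of $\J$, $\M$ yields $x\nearrow y$ iff $\phi(x)\searrow\phi(y)$. With $\phi={}^\cpl$, this gives $\pji{a}{b}{U}\nearrow\pji{e}{f}{W}^\cpl$ iff $\pji{a}{b}{U}^\cpl\searrow\pji{e}{f}{W}$, and a second application of Lemma~\ref{L:ArrPrecJirr} rewrites this as $\pji{e}{f}{\widetilde{W}}\leq\pji{a}{b}{U}$.

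An elementary inspection of Notation~\ref{Not:DabU} (essentially what is recorded after Lemma~\ref{L:ArrPrecJirr}) yields the basic order relation $\pji{r}{s}{R}\leq\pji{t}{u}{T}$ iff $t\leq r<s\leq u$ and $R=T\cap[r,s]$. The arrow-composition condition therefore becomes
\[
a\leq e\leq c<d\leq f\leq b,\quad \widetilde{W}=U\cap[e,f],\quad \widetilde{V}=W\cap[c,d],
\]
whose middle clause determines $W=(\oc{e,f}\setminus U)\cup\set{f}$ and forces $e\notin U$, $f\in U$.

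For the $(\Rightarrow)$ direction, I would substitute this $W$ into the last clause and observe that the constraints $c\notin\widetilde{V}$ and $d\in\widetilde{V}$ coming from $(c,d,\widetilde{V})\in\cD_n$ uniformly collapse $W\cap[c,d]$ to $(\oc{c,d}\setminus U)\cup\set{d}$; reapplying $\sim$ once more gives $V=(U\cap\oc{c,d})\cup\set{d}=U\res[c,d]$. The strict inclusion $[c,d]\subsetneqq[a,b]$ is forced by the clause $\pji{a}{b}{U}\neq\pji{c}{d}{V}$ in definition~\eqref{Eq:DefJoinDep}, because $[c,d]=[a,b]$ together with $V=U\res[a,b]$ collapses to $V=U$ (using $a\notin U$, $b\in U$). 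For $(\Leftarrow)$, I would exhibit the explicit witness $e:=c$ if $c\notin U$ else $e:=a$, $f:=d$ if $d\in U$ else $f:=b$, and $W:=\widetilde{U\cap[e,f]}$; the strict inclusion guarantees that $a<c$ (resp.~$d<b$) is available precisely when the construction invokes $e=a$ (resp.~$f=b$), and a short four-case check verifies the three displayed equalities. The main technical hurdle is the bookkeeping of the involution~$\sim$ through the composition of the two arrow relations and through this case analysis.
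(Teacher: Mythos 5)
Your proposal is correct and follows exactly the route the paper indicates for this proposition: reduce $\bD$ to the composite arrow relation via Lemma~\ref{L:Arr2D}, use the complementation anti-automorphism of Proposition~\ref{P:Perm(n)compl} to convert $\nearrow$ into $\searrow$, and apply Lemma~\ref{L:ArrPrecJirr} twice together with the order relation between \jirr\ elements recorded after that lemma. The bookkeeping with the involution $\widetilde{\ \cdot\ }$ and the explicit witness $(e,f,W)$ in the converse direction both check out (note only that when $e=a$ is invoked one has $a<c$ automatically from $c\in U$ and $a\notin U$, not from the strict inclusion), so this is a faithful filling-in of the details the paper leaves to the reader.
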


This implies trivially that the join-dependency relation on $\sP(n)$ is a strict ordering on~$\J(\sP(n))$. In particular, this relation has no cycle. Since~$\sP(n)$ is self-dual (cf. Lemma~\ref{P:Perm(n)compl}), we obtain the following result from Caspard~\cite[Theorem~1]{Casp00}.

\begin{theorem}\label{T:Nath00}
The lattice~$\sP(n)$ is bounded.
\end{theorem}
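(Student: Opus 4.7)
The plan is to combine the characterization of the join-dependency relation on $\sP(n)$ from Proposition~\ref{P:DrelPerm} with the self-duality of $\sP(n)$ established in Proposition~\ref{P:Perm(n)compl}, and then invoke the criterion \cite[Corollary~2.39]{FJN}: a finite lattice $L$ is bounded if{f} $\bD_L$ has no cycle on $\J(L)$ and $\bD_{L^\op}$ has no cycle on $\J(L^\op)$.

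First I would observe that Proposition~\ref{P:DrelPerm} forces strict decrease of the underlying interval: whenever $\pji{a}{b}{U}\bD\pji{c}{d}{V}$, one has $[c,d]\subsetneqq[a,b]$. Assigning to each \jirr\ element $\pji{a}{b}{U}\in\J(\sP(n))$ the positive integer $b-a$, this shows that $\bD_{\sP(n)}$ is contained in the strict well-founded relation $\pji{a}{b}{U}\prec\pji{c}{d}{V}\Leftrightarrow b-a>d-c$. In particular, $\bD_{\sP(n)}$ admits no cycle on $\J(\sP(n))$; in fact it is a strict partial order. This handles half of the criterion.

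To handle the other half, I would use that the involution $\bx\mapsto\bx^\cpl$ from Proposition~\ref{P:Perm(n)compl} is a dual automorphism of $\sP(n)$, so $\sP(n)$ and $\sP(n)^\op$ are isomorphic lattices. Consequently $\bD_{\sP(n)^\op}$ is, up to relabeling via complementation, the same relation as $\bD_{\sP(n)}$, and the acyclicity established above transports verbatim to $\J(\sP(n)^\op)$. Applying \cite[Corollary~2.39]{FJN} then yields that $\sP(n)$ is bounded.

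I do not expect a real obstacle here: the substantive work has been done in Proposition~\ref{P:DrelPerm} and Proposition~\ref{P:Perm(n)compl}. The only thing to be mildly careful about is to phrase the self-duality argument correctly, making sure that the dual automorphism genuinely converts cycles of $\bD_{\sP(n)^\op}$ into cycles of $\bD_{\sP(n)}$, so that cycle-freeness on one side propagates to the other.
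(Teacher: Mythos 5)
Your proposal is correct and follows exactly the paper's route: the paper likewise deduces from Proposition~\ref{P:DrelPerm} that $\bD_{\sP(n)}$ is a strict (hence cycle-free) ordering on $\J(\sP(n))$, invokes the self-duality of Proposition~\ref{P:Perm(n)compl} for the dual condition, and concludes by the criterion of \cite[Corollary~2.39]{FJN}. No gaps.
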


It is noteworthy to observe the following characterization of minimal join-covers in~$\sP(n)$, which can be obtained as a consequence of Proposition~\ref{P:DrelPerm}. Although we will make no direct use of Proposition~\ref{P:MinJoinCov}, the authors of the present paper found this result useful in coining the relevant notion of a \emph{$U$-polarized measure} introduced in Definition~\ref{D:PolMeas}.

\begin{proposition}\label{P:MinJoinCov}
For $(a,b,U)\in\cF_n$, the minimal join-covers $C$ of $\pji{a}{b}{U}$ are exactly those of the form
  \[
  C=\setm{\pji{z_{i}}{z_{i +1}}{U\res[z_{i},z_{i+1}]}}{i<k}\,,
  \]
where~$k$ is a positive integer and $a=z_0<z_1<\cdots<z_k=b$.
\end{proposition}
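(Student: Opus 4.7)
The plan is to establish both inclusions of the claimed equality: every $C$ of the displayed form is a minimal join-cover of $\pji{a}{b}{U}$, and every minimal join-cover arises in this way.

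For the forward direction, I first verify that such a $C$ is a join-cover. Given $(p,q)\in\pji{a}{b}{U}$, choose $j,j'$ with $z_j\leq p<z_{j+1}$ and $z_{j'}<q\leq z_{j'+1}$; the chain $p,z_{j+1},z_{j+2},\ldots,z_{j'},q$ (collapsing to $(p,q)$ itself when $j=j'$) has each consecutive pair in some $\pji{z_i}{z_{i+1}}{U\res[z_i,z_{i+1}]}$, so $(p,q)\in\Cl\bigcup C=\bigvee C$. For minimality, let $D$ be a join-cover refining $C$. From $(a,b)\in\bigvee D$ I extract a chain $a=y_0<\cdots<y_m=b$ with each $(y_l,y_{l+1})\in X_l\in D$. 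Since $D$ refines $C$, each $X_l\leq\pji{z_{i_l}}{z_{i_l+1}}{U\res[z_{i_l},z_{i_l+1}]}$, which forces $y_l,y_{l+1}\in[z_{i_l},z_{i_l+1}]$. Because consecutive intervals $[z_i,z_{i+1}]$ and $[z_{i+1},z_{i+2}]$ share only the point $z_{i+1}$ and non-consecutive ones are disjoint, no $z_j$ can lie strictly between any $y_l$ and $y_{l+1}$; hence every $z_j$ equals some $y_l$. Fixing $j<k$ and restricting to the subchain from $z_j$ to $z_{j+1}$, the involved $X_s$'s all satisfy $X_s\leq\pji{z_j}{z_{j+1}}{U\res[z_j,z_{j+1}]}$, and their join contains $(z_j,z_{j+1})$. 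By Lemma~\ref{L:jirrSingl} the unique lower cover of $\pji{z_j}{z_{j+1}}{U\res[z_j,z_{j+1}]}$ omits the pair $(z_j,z_{j+1})$, so that join cannot fall below $\pji{z_j}{z_{j+1}}{U\res[z_j,z_{j+1}]}$, and therefore equals it. Join-irreducibility then forces one $X_s$ to equal $\pji{z_j}{z_{j+1}}{U\res[z_j,z_{j+1}]}$, which places it in $D$. Hence $C\subseteq D$.

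For the converse, let $C$ be a minimal join-cover. If $\pji{a}{b}{U}\leq c$ for some $c\in C$, then $\{\pji{a}{b}{U}\}$ is a join-cover refining $C$, and minimality yields $C=\{\pji{a}{b}{U}\}$, the case $k=1$. Otherwise $C$ is nontrivial; a standard refinement argument (splitting any reducible element as a join of two strictly smaller elements) forces every $q\in C$ to be join-irreducible, and \cite[Lemma~2.31]{FJN} combined with Proposition~\ref{P:DrelPerm} writes $q=\pji{c_q}{d_q}{U\res[c_q,d_q]}$ with $[c_q,d_q]\subsetneq[a,b]$. Pick a chain $a=y_0<\cdots<y_m=b$ with each $(y_l,y_{l+1})\in q_l\in C$. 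The containment $[y_l,y_{l+1}]\subseteq[c_{q_l},d_{q_l}]$, combined with the definitions of $\pji{a}{b}{U}$ and $U\res[i,j]$, yields $\pji{y_l}{y_{l+1}}{U\res[y_l,y_{l+1}]}\leq q_l$, so $C_0:=\setm{\pji{y_l}{y_{l+1}}{U\res[y_l,y_{l+1}]}}{l<m}$ (a join-cover of the required form by the first step) refines $C$, and minimality gives $C\subseteq C_0$. Conversely, each $q_l$, identified as some $\pji{y_{l''}}{y_{l''+1}}{U\res[y_{l''},y_{l''+1}]}\in C_0$, must satisfy $[y_l,y_{l+1}]\subseteq[y_{l''},y_{l''+1}]$; strict monotonicity of the $y$-chain then forces $l''=l$, so $C_0\subseteq C$ and $C=C_0$.

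The main obstacle is the minimality step of the forward direction: one must promote the piecewise chain information inside $\bigvee D$ to a statement about a single element $X_s\in D$, and the leverage comes entirely from Lemma~\ref{L:jirrSingl}, which identifies the pair $(z_j,z_{j+1})$ as the unique feature distinguishing $\pji{z_j}{z_{j+1}}{U\res[z_j,z_{j+1}]}$ from its unique lower cover. Once that lemma is applied, join-irreducibility finishes the argument, and the converse direction becomes direct bookkeeping.
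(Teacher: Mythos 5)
The paper states this proposition without proof, remarking only that it ``can be obtained as a consequence of Proposition~\ref{P:DrelPerm}''; your argument is a correct and complete realization of exactly that route, using Proposition~\ref{P:DrelPerm} (via \cite[Lemma~2.31]{FJN}) to pin down the shape of the elements of a minimal nontrivial join-cover, and Lemma~\ref{L:jirrSingl} together with the interval-disjointness bookkeeping to establish minimality of the displayed covers. I checked the delicate points --- the chain argument showing each $C$ of the given form is a join-cover, the step forcing every $z_j$ to occur among the $y_l$, and the comparison $\pji{y_l}{y_{l+1}}{U\res[y_l,y_{l+1}]}\leq q_l$ --- and found no gaps.
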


\section{The lattices~$\sA_U(n)$ and Tamari lattices}\label{S:Alterno}

In this section we shall introduce the lattices~$\sA_U(n)$, that will turn out later to be the \emph{Cambrian lattices of type~A} (cf. Proposition~\ref{P:CambCong}), and the Tamari lattices~$\sA(n)$ as particular cases. We shall relate our definition of~$\sA(n)$ with the one used by Huang and Tamari~\cite{HuTa72}, and verify that there are arbitrarily large $3$-generated sublattices of Tamari lattices (Proposition~\ref{P:AssocNotLocFin}). We shall also verify that every lattice~$\sA_U(n)$ is a sublattice of the corresponding~$\sP(n)$ (Corollary~\ref{C:Int(Uclosed)}).

Observe from Definition~\ref{D:DefOrdPerm} that for a positive integer~$n$, the permutohedron~$\sP(n)$ consists of all the transitive subsets~$\ba\subseteq\cI_n$ such that
 \[
 (x,z)\in\ba\text{ implies that either }(x,y)\in\ba
 \text{ or }(y,z)\in\ba\,,\quad\text{for all }x<y<z\text{ in }[n]\,.
 \]
When the choice whether $(x,y)\in\ba$ or $(y,z)\in\ba$ is determined
by a subset $U$ of~$[n]$, we obtain the structures~$\sA_U(n)$ defined
below. Namely, let us denote by~$\sD_U(n)$ the collection of all subsets~$\ba$ of~$\cI_n$ such that $1\leq i<j<k\leq n$ and $(i,k)\in\ba$ implies that $(i,j)\in\ba$ in case~$j\in U$ and $(j,k)\in\ba$ in case $j\notin U$.
Observe that, in order to define $\sD_U(n)$, we need only to know the
interior $U \setminus \set{1,n}$, so
$\sD_U(n)=\sD_{U\setminus\set{1,n}}(n)$. 
\begin{definition}\label{D:Alterno}
We define $\sA_U(n)$ as the collection of all \emph{transitive} members of~$\sD_U(n)$, and we order~$\sA_U(n)$ by set-theoretical inclusion. For $U=[n]$, we set $\sA(n)=\sA_{[n]}(n)$, the \emph{Tamari lattice on~$n$}.
\end{definition}

We first explain the terminology ``Tamari lattice'', for our structure~$\sA(n)$, as follows. Denote by~$\sA'(n)$ the set of all
maps $f\colon[n]\to[n]$ such that
\begin{itemize}
\item $i\leq f(i)$, for each $i\in[n]$;

\item $i\leq j\leq f(i)$ implies that $f(j)\leq f(i)$, for all $i,j\in[n]$.
\end{itemize}
We endow~$\sA'(n)$ with the componentwise ordering.

Huang and Tamari proved in~\cite{HuTa72} that~$\sA'(n)$ is isomorphic to the originally defined Tamari lattice, defined as the poset of all bracketings of~$n+1$ symbols given an ordering defined from certain natural rewriting rules (see the Introduction). Thus we will be entitled to call~$\sA(n)$ a Tamari lattice once we establish the following easy result.

\begin{proposition}\label{P:A(n)=A'(n)}
The posets~$\sA(n)$ and~$\sA'(n)$ are isomorphic, for every positive integer~$n$.
\end{proposition}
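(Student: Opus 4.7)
The plan is to build an explicit order-preserving bijection between $\sA(n)$ and $\sA'(n)$ by sending a clopen relation $\ba$ to the function recording, for each index~$i$, the largest $j$ with $j=i$ or $(i,j)\in\ba$. Concretely, for $\ba\in\sA(n)$ I would set
\[
f_\ba(i):=\max\bigl(\set{i}\cup\setm{j\in[n]}{(i,j)\in\ba}\bigr)\,,
\]
and for $f\in\sA'(n)$ I would set
\[
\ba_f:=\setm{(i,j)\in\cI_n}{j\leq f(i)}\,.
\]

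First I would verify that $\ba_f\in\sA(n)$. Transitivity is immediate from the two axioms of $\sA'(n)$: if $(i,j),(j,k)\in\ba_f$, then $j\leq f(i)$ forces $f(j)\leq f(i)$, hence $k\leq f(j)\leq f(i)$ and $(i,k)\in\ba_f$. The membership in $\sD_{[n]}(n)$ is trivial, since $(i,k)\in\ba_f$ and $i<j<k$ give $j<k\leq f(i)$, so $(i,j)\in\ba_f$.

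Second, and this is the only step that needs any care, I would verify that $f_\ba\in\sA'(n)$. The inequality $i\leq f_\ba(i)$ holds by construction. For the second axiom, suppose $i\leq j\leq f_\ba(i)=:m$ and set the target inequality $f_\ba(j)\leq m$. If $j=i$ the claim is trivial, so assume $j>i$; then $m>i$ and hence $(i,m)\in\ba$. The $\sD_{[n]}(n)$ condition on~$\ba$ (applied with the middle index $j$ when $j<m$) yields $(i,j)\in\ba$; the case $j=m$ is handled likewise via $(i,j)=(i,m)\in\ba$. Now for any $(j,k)\in\ba$, transitivity of~$\ba$ produces $(i,k)\in\ba$, so $k\leq m$. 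Therefore $f_\ba(j)\leq m=f_\ba(i)$, as required.

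Third, I would check the two composite identities. The equality $f_{\ba_f}=f$ is transparent from the definitions. For $\ba_{f_\ba}=\ba$, the inclusion $\ba\subseteq\ba_{f_\ba}$ is by construction, while for the reverse inclusion one takes $(i,j)\in\ba_{f_\ba}$ and observes that $j\leq f_\ba(i)$; if $j=f_\ba(i)$ then $(i,j)\in\ba$ directly, and if $j<f_\ba(i)$ one applies once more the $\sD_{[n]}(n)$ condition to the witness $(i,f_\ba(i))\in\ba$ to recover $(i,j)\in\ba$. Both assignments are evidently monotone with respect to~$\subseteq$ on $\sA(n)$ and componentwise order on $\sA'(n)$, so the bijection is an order-isomorphism. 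The only nontrivial step is the second axiom of $\sA'(n)$ for $f_\ba$; everything else reduces to the definitions, transitivity of~$\ba$, and the $\sD_{[n]}(n)$ condition.
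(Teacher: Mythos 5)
Your proposal is correct and takes essentially the same approach as the paper: the paper defines the same pair of mutually inverse, order-preserving maps (its $\varphi(\bx)(i)$, the largest $j$ with $\set{i}\times\oc{i,j}\subseteq\bx$, coincides with your $f_\ba(i)$ on $\sA(n)$ because the $\sD_{[n]}(n)$ condition makes $\setm{j}{(i,j)\in\ba}$ an initial segment of $\oc{i,n}$) and leaves the verification as a straightforward exercise, which you carry out correctly.
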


\begin{proof}
We define maps $\varphi\colon\sA(n)\to\sA'(n)$ and $\psi\colon\sA'(n)\to\sA(n)$ as follows:

\begin{itemize}
\item For each $\bx\in\sA(n)$, $\varphi(\bx)$ is the map from~$[n]$ to~$[n]$ defined by
 \[
 \varphi(\bx)(i)=\text{largest }j\in[i,n]\text{ such that }
 \set{i}\times\oc{i,j}\subseteq\bx\,,
 \qquad\text{for each }i\in[n]\,.
 \]

\item For each $f\in\sA'(n)$, we define $\psi(f)=\setm{(i,j)\in\cI_n}{j\leq f(i)}$.
\end{itemize}
It is a straightforward exercise to verify that these assignments define mutually inverse, order-preserving maps between~$\sA(n)$ and~$\sA'(n)$.
\end{proof}

For a positive integer~$n$, we define elements $\ba_n,\bb_n,\bc_n\in\sA(n)$ by
 \begin{align*}
 \ba_n&=\seq{1,n}\,,\\
 \bb_n&=\bigcup\famm{\seq{i,i+1}}{i\text{ even}\,,\ 1\leq i<n}\,,\\
 \bc_n&=\bigcup\famm{\seq{i,i+1}}{i\text{ odd}\,,\ 1\leq i<n}\,.
 \end{align*}
It follows from the proof of Santocanale \cite[Proposition~5.16]{Sant07} that the cardinality of the sublattice of~$\sA(n)$ generated by $\set{\ba_n,\bb_n,\bc_n}$ goes to infinity as~$n$ goes to infinity. Therefore,

\begin{proposition}\label{P:AssocNotLocFin}
There are arbitrarily large $3$-generated sublattices of Tamari lattices.
\end{proposition}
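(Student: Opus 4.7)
The plan is to show that the three explicit elements $\ba_n,\bb_n,\bc_n$ of $\sA(n)$ generate a sublattice whose cardinality grows without bound with~$n$. The natural route, which is exactly the one followed in Santocanale \cite[Proposition~5.16]{Sant07}, is to exhibit an infinite sequence of ternary lattice terms $(t_k)_{k\geq 1}$ such that, for each~$k$, the values $t_1(\ba_n,\bb_n,\bc_n),\dots,t_k(\ba_n,\bb_n,\bc_n)$ are pairwise distinct elements of~$\sA(n)$ once~$n$ exceeds a threshold~$n(k)$.

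As a preliminary step, one verifies that $\ba_n,\bb_n,\bc_n$ genuinely lie in~$\sA(n)$ in the sense of Definition~\ref{D:Alterno}. The set $\seq{1,n}$ is represented inside~$\cI_n$ as $\setm{(1,j)}{1<j\leq n}$, which is transitive (all edges share the first coordinate~$1$) and satisfies the Tamari closure of Definition~\ref{D:Alterno} with $U=[n]$, because for every $(1,k)\in\ba_n$ and every $1<j<k$ the pair~$(1,j)$ already belongs to~$\ba_n$ by construction. The sets $\bb_n$ and $\bc_n$ are disjoint unions of covering edges of the form $\seq{i,i+1}=\set{(i,i+1)}$, for which both transitivity and the Tamari closure are vacuous.

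The sequence of terms $t_k$ is then built by alternating a join with~$\sx$ and a meet with~$\sy$ or~$\sz$ in a zigzag pattern. At each step, the join with~$\ba_n$ extends the edge set via the transitive closure of Proposition~\ref{P:Perm(n)Latt}, while the subsequent meet with~$\bb_n$ or~$\bc_n$ selects a \jirr\ element of $\sP(n)$ of the form $\pji{1}{b_k}{U_k}$, with the upper parameter~$b_k$ (or equivalently the set~$U_k$) strictly varying along the sequence. By Lemma~\ref{L:DescJ(Permn)}, distinct triples $(1,b_k,U_k)$ yield distinct \jirr\ elements of~$\sP(n)$, so the values of the~$t_k$ are pairwise distinct, and since $\sA(n)$ is a sublattice of~$\sP(n)$ (Corollary~\ref{C:Int(Uclosed)}), they are pairwise distinct in~$\sA(n)$ as well.

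The main obstacle is the bookkeeping of the meets, which coincide with meets in~$\sP(n)$ and are computed as the interior~$\Int$ of the intersection via the formula~\eqref{Eq:ExprInt(X)}. The interior can shrink the edge set in ways that are not immediately visible from the union description of~$\bb_n$ and~$\bc_n$, and one must check at each step that the surviving edges are precisely those predicted by the induction. The alternating covering-edge structure of~$\bb_n$ and~$\bc_n$ keeps each such computation local to a short subinterval of~$[n]$, which is exactly what makes the induction work; the careful verification appears in the proof of \cite[Proposition~5.16]{Sant07} and transfers verbatim to the present formulation via the isomorphism of Proposition~\ref{P:A(n)=A'(n)}.
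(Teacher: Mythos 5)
Your proposal is correct and follows essentially the same route as the paper: both reduce the statement to Santocanale \cite[Proposition~5.16]{Sant07} applied to the same three elements $\ba_n$, $\bb_n$, $\bc_n$, with the substance of the argument (the zigzag of ternary terms and the verification that their values stay distinct) deferred to that reference. Your added preliminary check that $\ba_n$, $\bb_n$, $\bc_n$ indeed lie in $\sA(n)$ is accurate and harmless, though not something the paper bothers to spell out.
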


In universal algebraic terms, Proposition~\ref{P:AssocNotLocFin} implies immediately that \emph{the variety of lattices generated by all Tamari lattices is not locally finite}.

Although we found the description of Tamari lattices by either~$\sA(n)$ or~$\sA'(n)$ more convenient for our present purposes, this is not the case for all applications. For example, bracket reversing in the original description of the Tamari lattice easily implies the well-known fact that~$\sA(n)$ is \emph{self-dual}. This self-duality is not apparent in either description of the Tamari lattice by~$\sA(n)$ or~$\sA'(n)$. It is implicit in Lemmas~8 and~9 of Urquhart~\cite{Urqu78}, and stated in Bennett and Birkhoff~\cite[page~139]{BeBi94}.

The corresponding dual automorphism of~$\sA'(n)$ can be described explicitly as follows. Extend every element $f\in\sA'(n)$ at the point~$0$ by setting
$f(0)=n$. Observe that the conditions~(i) and~(ii) defining~$\sA'(n)$ are still satisfied on $[0,n]$. Next, for each $f\in\sA'(n)$, define $\tilde{f}\colon[0,n]\to[0,n]$ by setting $\tilde{f}(0)=n$, and
 \[
 \tilde{f}(i)=\text{least }j\in[i,n]\text{ such that }n-i<f(n-j)\,,\quad
 \text{for each }i\in[n]\,.
 \]
The proof of the following result is then an easy exercise.

\begin{proposition}\label{P:gg*antiaut}
The assignment $f\mapsto\tilde{f}$ defines an involutive dual automorphism of~$\sA'(n)$.
\end{proposition}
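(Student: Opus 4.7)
The plan is to begin by substituting $k:=n-j$ so as to rewrite the definition of $\tilde{f}$ in the more symmetric form
\[
\tilde{f}(i)=n-\max\Setm{k\in[0,n-i]}{f(k)>n-i}\,,
\]
valid for each $i\in[n]$; the set on the right is nonempty (it contains $0$, since $f(0)=n$) and its maximum is at most $n-i$. This reformulation makes all three verifications---membership in $\sA'(n)$, involutivity, and order reversal---much more uniform.

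I would first check $\tilde{f}\in\sA'(n)$. Condition~(i), $i\leq\tilde{f}(i)$, is immediate because the maximum above is at most $n-i$. For condition~(ii), assume $i\leq j\leq\tilde{f}(i)=n-k_{i}$, where $k_{i}$ realizes the maximum for~$i$. Then $k_{i}\leq n-j$ and $f(k_{i})>n-i\geq n-j$, so $k_{i}$ belongs to the set defining $\tilde{f}(j)$; hence $\tilde{f}(j)\leq n-k_{i}=\tilde{f}(i)$. Order reversal follows by the same monotonicity in disguise: if $f\leq g$ pointwise, then the set defining $\tilde{g}(i)$ contains the one defining $\tilde{f}(i)$, so its maximum is at least as large, which gives $\tilde{g}(i)\leq\tilde{f}(i)$.

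The main point, and the step where condition~(ii) of $\sA'(n)$ is used essentially, is involutivity $\tilde{\tilde{f}}=f$. Fix $i\in[n]$ and set $k^{*}:=n-f(i)$, so $n-k^{*}=f(i)$. Applying condition~(ii) for $f$ on the interval $[i,f(i)]$ yields $f(m)\leq f(i)=n-k^{*}$ for every $m\in[i,n-k^{*}]$, which translates exactly to $\tilde{f}(k^{*})>n-i$. Conversely, for any $k$ with $k^{*}<k\leq n-i$, one has $f(i)>n-k$ with $i\in[i,n-k]$, forcing $\tilde{f}(k)\leq n-i$. Thus $k^{*}$ realizes the outer maximum computing $\tilde{\tilde{f}}(i)$, so $\tilde{\tilde{f}}(i)=n-k^{*}=f(i)$. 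The delicate part---modest as it is---is recognizing that axiom~(ii) of $\sA'(n)$ is exactly what is needed to place $k^{*}=n-f(i)$ into the set whose maximum computes $\tilde{\tilde{f}}(i)$; everything else is a routine change of variable $k=n-j$.
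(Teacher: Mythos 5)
Your proof is correct and complete. The paper itself offers no argument for this proposition (it is declared ``an easy exercise''), so there is no official route to compare against; your change of variable $k=n-j$, giving $\tilde{f}(i)=n-\max\setm{k\in[0,n-i]}{f(k)>n-i}$, and your identification of $k^{*}=n-f(i)$ as the maximizer in the computation of $\tilde{\tilde{f}}(i)$ are exactly the right observations, and all three verifications (closure under the two axioms of $\sA'(n)$, order reversal, and involutivity) check out, with condition~(ii) entering precisely where you say it does.
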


We come now to the structures~$\sA_U(n)$.
Clearly, $\sD_U(n)$ is a sublattice of the powerset lattice of~$\cI_n$; in particular, it is distributive. Furthermore, $\sA_U(n)$ is a \msubsemi\ of~$\sD_U(n)$ containing the largest element (namely~$\cI_n$); in particular, it is a lattice.

A key point in understanding the lattice structure of~$\sA_U(n)$ is the following analogue of Lemma~\ref{Int(closed)}.

\begin{lemma}\label{L:Int(Uclosed)}
  The closure~$\Cl(\bx)$ belongs to $\sA_U(n)$, for each
  $\bx\in\sD_U(n)$. Consequently, $\Cl(\bx)$ is the least element
  of~$\sA_U(n)$ containing~$\bx$.
\end{lemma}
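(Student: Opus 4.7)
The plan is to establish the $\sD_U(n)$-membership of $\Cl(\bx)$ directly, and then use the fact that $\Cl(\bx)$ is transitive by definition to get membership in $\sP_U(n)$. The least-upper-bound property then follows formally since any transitive set containing $\bx$ must contain $\Cl(\bx)$.

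The key tool is the usual description of the transitive closure inside $\cI_n$: since every pair in $\cI_n$ has first coordinate strictly less than the second, $(i,k)\in\Cl(\bx)$ holds if{f} there exist $m\geq1$ and integers $i=s_0<s_1<\cdots<s_m=k$ in $[n]$ with $(s_l,s_{l+1})\in\bx$ for each $l<m$. Now suppose $(i,k)\in\Cl(\bx)$ and fix $j$ with $i<j<k$; pick a witnessing chain $i=s_0<\cdots<s_m=k$. I would split into two cases. If $j=s_l$ for some $l$, then both $(i,j)=(s_0,s_l)$ and $(j,k)=(s_l,s_m)$ already lie in $\Cl(\bx)$, and nothing remains to prove. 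Otherwise, there is a unique index $l<m$ with $s_l<j<s_{l+1}$; since $(s_l,s_{l+1})\in\bx$ and $\bx\in\sD_U(n)$, we get $(s_l,j)\in\bx$ if $j\in U$, and $(j,s_{l+1})\in\bx$ if $j\notin U$. Concatenating with the appropriate initial or terminal segment of the original chain yields a chain from $i$ to $j$ (resp., from $j$ to $k$) witnessing $(i,j)\in\Cl(\bx)$ (resp., $(j,k)\in\Cl(\bx)$). This is precisely the $\sD_U(n)$-condition for $\Cl(\bx)$ at the triple $i<j<k$.

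Since $\Cl(\bx)$ is transitive by construction, the conjunction with the previous paragraph yields $\Cl(\bx)\in\sP_U(n)$. For the final ``consequently'' statement, any $\by\in\sP_U(n)$ with $\bx\subseteq\by$ is a transitive superset of $\bx$, and therefore contains the transitive closure $\Cl(\bx)$; combined with $\Cl(\bx)\in\sP_U(n)$ just established, this shows $\Cl(\bx)$ is the least element of $\sP_U(n)$ above $\bx$.

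The only nontrivial point is the second case of the argument, which is precisely where the asymmetric definition of $\sD_U(n)$ is used: depending on whether the intermediate index $j$ belongs to $U$, one shortens the chain on the left or on the right. I do not anticipate any real obstacle — the argument is a one-step propagation of the $\sD_U(n)$-axiom from edges of the witnessing chain to the full interval $[i,k]$.
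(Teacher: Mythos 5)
Your proof is correct and follows essentially the same route as the paper's: verify the $\sD_U(n)$-condition for $\Cl(\bx)$ by taking a witnessing chain for $(i,k)$, locating $j$ relative to the chain, and using the $\sD_U(n)$-axiom on the single edge straddling $j$ to shorten the chain on the left (if $j\in U$) or on the right (if $j\notin U$). The only difference is cosmetic — your case split is ``$j$ is a chain point'' versus ``$j$ lies strictly inside an edge,'' while the paper splits on $j=s_{l+1}$ versus $j<s_{l+1}$ — and both handle all cases correctly.
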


\begin{proof}
Let $i<j<k$ with $(i,k)\in\Cl(\bx)$. By definition, there are a positive integer~$m$ and $i=s_0<s_1<\cdots<s_m=k$ such that $(s_u,s_{u+1})\in\bx$ for each $u<m$. Let $l<m$ such that $s_l<j\leq s_{l+1}$. If $j=s_{l+1}$, then the chain $i=s_0<s_1<\cdots<s_{l+1}=j$ witnesses the relation $(i,j)\in\Cl(\bx)$. Now suppose that $j<s_{l+1}$. If $j\in U$, then $(s_l,j)\in\bx$ and the chain $i=s_0<s_1<\cdots<s_l<j$ witnesses the relation $(i,j)\in\Cl(\bx)$. If $j\notin U$, then $(j,s_{l+1})\in\bx$ and the chain $j<s_{l+1}<\cdots<s_m=k$ witnesses the relation $(j,k)\in\Cl(\bx)$.

This completes the proof that $\Cl(\bx)$ belongs to~$\sD_U(n)$. Since~$\Cl(\bx)$ is transitive, it thus belongs to~$\sA_U(n)$.
\end{proof}

\begin{corollary}\label{C:Int(Uclosed)}
The set $\sA_U(n)$ is a $(0,1)$-sublattice of~$\sP(n)$. The meet and the join of elements $\bx,\by\in\sA_U(n)$ are given by $\bx\wedge\by=\bx\cap\by$ and $\bx\vee\by=\Cl(\bx\cup\by)$, respectively.
\end{corollary}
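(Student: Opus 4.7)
The plan is to verify three things in succession: that every element of $\sP_U(n)$ is already clopen (so $\sP_U(n)\subseteq\sP(n)$), that $\sP_U(n)$ is closed under the meet operation of $\sP(n)$ (with the simple formula $\bx\cap\by$), and that it is closed under the join operation of $\sP(n)$ (with the formula $\Cl(\bx\cup\by)$). The containments $\es,\cI_n\in\sP_U(n)$ are immediate, so these three steps will give the $0,1$-sublattice statement.

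The only nonobvious point is that elements of $\sP_U(n)$ are open, not merely transitive. Given $\bx\in\sP_U(n)$, I would prove $\cI_n\setminus\bx$ is transitive as follows: suppose toward a contradiction that $(i,j),(j,k)\in\cI_n\setminus\bx$ while $(i,k)\in\bx$. Since $\bx\in\sD_U(n)$, membership of $(i,k)$ forces $(i,j)\in\bx$ when $j\in U$ and $(j,k)\in\bx$ when $j\notin U$; either case contradicts our assumption. Hence $\bx$ is open, and combined with transitivity (i.e.\ closedness), $\bx\in\sP(n)$.

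For meets, I would first check that $\sD_U(n)$ is closed under intersection, which is immediate from the conjunction of the two defining implications for $\bx$ and $\by$. Since intersections of transitive sets are transitive, $\bx\cap\by\in\sP_U(n)$. Having just shown $\sP_U(n)\subseteq\sP(n)$, this intersection is clopen, hence $\Int(\bx\cap\by)=\bx\cap\by$ and so, by Proposition~\ref{P:Perm(n)Latt}, it coincides with the meet of $\bx$ and $\by$ taken in $\sP(n)$. For joins, I would observe that $\sD_U(n)$ is trivially closed under union (the defining implication of $\sD_U(n)$ is preserved when enlarging the set). Lemma~\ref{L:Int(Uclosed)} then places $\Cl(\bx\cup\by)$ in $\sP_U(n)$; by that same lemma it is the least element of $\sP_U(n)$ containing $\bx\cup\by$, and by Proposition~\ref{P:Perm(n)Latt} it equals the join of $\bx$ and $\by$ in $\sP(n)$.

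The main obstacle is really just the openness check, since everything else is formal unpacking of the definitions of $\sD_U(n)$ and $\sP_U(n)$ together with the already-established lattice structure of $\sP(n)$. Once openness is in hand, the formulas for meet and join follow by comparing with the formulas of Proposition~\ref{P:Perm(n)Latt}, and the $0,1$-sublattice conclusion is automatic.
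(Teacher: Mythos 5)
Your proposal is correct and follows essentially the same route as the paper: the paper's opening observation of Section~\ref{S:Alterno} (that $\sP(n)$ consists of the transitive sets satisfying the ``either $(x,y)$ or $(y,z)$'' condition) is exactly your openness check in contrapositive form, the closure of $\sD_U(n)$ under $\cap$ and $\cup$ is the paper's remark that $\sD_U(n)$ is a sublattice of the powerset lattice, and the join formula comes from Lemma~\ref{L:Int(Uclosed)} in both arguments. No gaps.
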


{}From Theorem~\ref{T:Nath00} and Corollary~\ref{C:Int(Uclosed)} it
follows immediately that~\emph{$\sA_U(n)$ is a bounded lattice, for
  each $U\subseteq[n]$}. We shall verify in
Proposition~\ref{P:CambCong} that the lattices~$\sA_U(n)$ are exactly
the \emph{Cambrian lattices of type~A} introduced in Reading~\cite{Read06}.

\section{A subdirect decomposition of the permutohedron}\label{S:SubdDec}

In this section we shall strengthen Corollary~\ref{C:Int(Uclosed)} by
proving that every lattice~$\sA_U(n)$ is a \emph{retract} (with
respect to the lattice operations) of the
permutohedron~$\sP(n)$. This result is obtained by
introducing the general definition of \emph{join-fitness} of a finite
\jzus\ within a larger finite lattice (Definition~\ref{D:Fits}) and
proving that~$\sA_U(n)$ join-fits within~$\sP(n)$. We shall also prove
(Proposition~\ref{P:DecompPU(n)}) that every permutohedron~$\sP(n)$ is
a subdirect product of the corresponding~$\sA_U(n)$ and that the
lattices~$\sA_U(n)$ are exactly the Cambrian lattices of type~A
(Proposition~\ref{P:CambCong}).

Throughout this section we shall fix a positive integer~$n$.

The following lemma gives a convenient description of the \jirr\ elements of~$\sA_U(n)$, which involves the restriction operation defined in~\eqref{Eq:RestrSubset}. Its proof is a straightforward exercise.

\begin{lemma}\label{L:JiisPUn}
For any $(i,j)\in\cI_n$, the least element of~$\sA_U(n)$ containing~$(i,j)$ as an element is $\pUji{i}{j}{U}=\pji{i}{j}{U\res[i,j]}$. Consequently,
 \[
 \J(\sA_U(n))=\setm{\pUji{i}{j}{U}}{(i,j)\in\cI_n}\,.
 \]
\end{lemma}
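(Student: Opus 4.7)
The plan is to establish the two assertions in order. For the first, I would first check that $\pUji{i}{j}{U}=\pji{i}{j}{V}$, with $V:=U\res[i,j]=(U\cap\oc{i,j})\cup\set{j}$, lies in $\sP_U(n)$ and contains $(i,j)$; and then verify minimality. Membership in $\sP_U(n)$ proceeds in two parts. Transitivity is automatic in the same vacuous way as for the full permutohedron: any $(p,q),(q,r)\in\pji{i}{j}{V}$ would force $q\in V$ and $q\in[i,j]\setminus V$ simultaneously. For the $\sD_U(n)$ condition, suppose $(p,r)\in\pji{i}{j}{V}$ and $p<q<r$; then $q\in\oo{i,j}$. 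A straightforward check splits on whether $q\in U$ or not: in the first case $q\in V$, so $(p,q)\in\pji{i}{j}{V}$; in the second case $q\notin V$ (since $q<j$), so $(q,r)\in\pji{i}{j}{V}$. Finally, $(i,j)$ belongs to $\pji{i}{j}{V}$ since $i\notin V$ and $j\in V$.

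For minimality, I would take any $\bx\in\sP_U(n)$ with $(i,j)\in\bx$ and argue that every $(p,q)\in\pUji{i}{j}{U}$ lies in $\bx$. By definition, $p\in[i,j]\setminus V$ and $q\in V$; unpacking $V$, this means $p\notin U$ or $p=i$, and $q\in U$ or $q=j$, together with $i\leq p<q\leq j$. A short case analysis starting from $(i,j)\in\bx$ then applies the $\sD_U(n)$ condition: if $q<j$ (so $q\in U$), apply the condition to $(i,j)$ at the intermediate point $q$ to obtain $(i,q)\in\bx$; then, if $p>i$ (so $p\notin U$), apply it again to $(i,q)$ at the intermediate point $p$ to obtain $(p,q)\in\bx$. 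The degenerate cases $p=i$ or $q=j$ are handled by one application or none.

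For the ``consequently'' clause, I would first note that for every $\bx\in\sP_U(n)$, the minimality result just proved yields $\bx=\bigvee\famm{\pUji{i}{j}{U}}{(i,j)\in\bx}$ in $\sP_U(n)$: each summand is contained in $\bx$, and the union of the summands contains $\bx$ itself. Consequently any \jirr\ element of $\sP_U(n)$ must coincide with some $\pUji{i}{j}{U}$. Conversely, each $\pUji{i}{j}{U}$ is indeed \jirr: I would verify that $\pji{i}{j}{V}\setminus\set{(i,j)}$ still satisfies the $\sD_U(n)$ condition (the same case split as above shows the produced pair $(p,q)$ or $(q,r)$ differs from $(i,j)$), so it lies in $\sP_U(n)$; by Lemma~\ref{L:jirrSingl} and the minimality result it is then the unique lower cover of $\pUji{i}{j}{U}$ in $\sP_U(n)$.

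The only slightly delicate step is the case analysis for the minimality argument, which is why the authors label the whole lemma a straightforward exercise; everything else is bookkeeping with the explicit description of $V=U\res[i,j]$.
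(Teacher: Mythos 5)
Your proof is correct, and it fills in exactly the ``straightforward exercise'' that the paper leaves unproved: membership of $\pji{i}{j}{U\res[i,j]}$ in $\sP_U(n)$, minimality via two applications of the $\sD_U(n)$ condition, and \jirry\ via the observation that every element of $\sP_U(n)$ strictly below $\pUji{i}{j}{U}$ omits $(i,j)$ and hence lies below $\pUji{i}{j}{U}\setminus\set{(i,j)}$, which is itself in $\sP_U(n)$. All the case splits check out against the definitions of $\sD_U(n)$ and $U\res[i,j]$.
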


\begin{all}{Notational convention}
For the case $U=[n]$, we shall write $\seq{i,j}$ instead of $\pUji{i}{j}{[n]}$, the \jirr\ elements of the Tamari lattice~$\sA(n)$.
\end{all}

The lattices $\sA(4)=\sA_{[4]}(4)$ and $\sA_{\set{3}}(4)$ are represented on the left hand side and right hand side of Figure~\ref{Fig:Cambrian}, respectively. On these pictures, we mark the \jirr\ elements by doubled circles and we write~$ij$ instead of~$\pUji{i}{j}{U}$.

\begin{figure}[htb]
\includegraphics{Cambrian}
\caption{The lattices $\sA(4)$ and $\sA_{\set{3}}(4)$}
\label{Fig:Cambrian}
\end{figure}

In order to establish that every~$\sA_U(n)$ is a retract of the
corresponding permutohedron~$\sP(n)$, it is convenient to introduce
the following concept.

\begin{definition}\label{D:Fits}
We say that a \jzu-subsemilattice~$K$ of a finite lattice~$L$ \emph{join-fits within~$L$} if
 \[
 (\forall(p,q)\in\J(K)\times\J(L))(p\bD_L q\Rightarrow q\in K)\,.
 \]
\end{definition}

\begin{lemma}\label{L:NiceLattQuot}
Let $K$ be a lattice that join-fits within a finite lattice~$L$. Then the lower projection map $(\pi\colon L\to K$, $y\mapsto\text{largest }x\in K\text{ such that }x\leq y)$ is a surjective lattice homomorphism.
\end{lemma}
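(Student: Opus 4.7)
The plan is to verify separately that $\pi$ is well-defined and surjective, preserves meets, and preserves joins, the last of these being the essential point where join-fitness is used. First I would observe that the set $K\dnw y:=\{x\in K\mid x\leq y\}$ is nonempty (it contains~$0$) and closed under finite joins (since~$K$ is a \jzus\ of~$L$, joins in~$K$ coincide with joins in~$L$), so its join taken in~$L$ lies in~$K$ and is its largest element; thus $\pi(y)$ is well defined. Since $\pi(x)=x$ for every $x\in K$, $\pi$ is surjective, and it is clearly order-preserving.

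Meet-preservation is essentially formal. Writing $\wedge$ (resp., $\wedge_K$) for the meet in~$L$ (resp., in~$K$), the inequality $\pi(y_1\wedge y_2)\leq\pi(y_1)\wedge_K\pi(y_2)$ is immediate from monotonicity of $\pi$ and the fact that $\pi(y_1\wedge y_2)\in K$. Conversely, $\pi(y_1)\wedge_K\pi(y_2)$ is an element of $K$ lying below $\pi(y_1)\leq y_1$ and below $\pi(y_2)\leq y_2$, hence below $y_1\wedge y_2$; the definition of~$\pi$ then forces $\pi(y_1)\wedge_K\pi(y_2)\leq\pi(y_1\wedge y_2)$.

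The substantive step is join-preservation. Since~$K$ is finite and $p\in K$ satisfies $p\leq y$ iff $p\leq\pi(y)$, one has $\pi(y)=\bigvee(\J(K)\cap L\dnw y)$ for every $y\in L$. Combined with monotonicity, this reduces the nontrivial inequality $\pi(y_1\vee y_2)\leq\pi(y_1)\vee\pi(y_2)$ to the following claim: every $p\in\J(K)$ with $p\leq y_1\vee y_2$ satisfies $p\leq\pi(y_1)\vee\pi(y_2)$. If $p\leq y_i$ for some~$i$, this is immediate. Otherwise $\{y_1,y_2\}$ is a nontrivial join-cover of~$p$ in~$L$, and, as~$L$ is finite, it can be refined to a minimal nontrivial join-cover $C\subseteq L$ of~$p$. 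By the characterization of~$\bD_L$ recalled before the lemma, every $q\in C$ belongs to~$\J(L)$ and satisfies $p\bD_L q$; by the join-fitness hypothesis, each such~$q$ lies in~$K$. As $C$ refines $\{y_1,y_2\}$, each $q\in C$ lies below some~$y_i$, hence below $\pi(y_i)$, so $p\leq\bigvee C\leq\pi(y_1)\vee\pi(y_2)$.

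The main obstacle is precisely this last step: passing from the naive join-cover $\{y_1,y_2\}$ to a minimal nontrivial refinement in order to apply join-fitness. Everything else (well-definedness, surjectivity, meet-preservation, and the reverse inequality for joins, which follows from monotonicity together with the agreement of joins in~$K$ and in~$L$) is straightforward bookkeeping.
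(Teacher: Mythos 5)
Your proposal is correct and follows essentially the same route as the paper's proof: reduce the nontrivial inequality $\pi(y_0\vee y_1)\leq\pi(y_0)\vee\pi(y_1)$ to \jirr\ elements of~$K$, replace the join-cover $\set{y_0,y_1}$ by a minimal nontrivial join-cover, and invoke the characterization of~$\bD_L$ together with join-fitness to land back in~$K$. The extra bookkeeping you supply (well-definedness and meet-preservation, which the paper dismisses as obvious) is accurate.
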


\begin{proof}
  It is obvious that~$\pi$ is a surjective meet-homomorphism. Now let
  $y_0,y_1\in L$, we must prove that $\pi(y_0\vee
  y_1)\leq\pi(y_0)\vee\pi(y_1)$. It suffices to prove that for each
  $p\in\J(K)$, if $p\leq y_0\vee y_1$, then
  $p\leq\pi(y_0)\vee\pi(y_1)$. The result is trivial if either $p\leq
  y_0$ or $p\leq y_1$, so suppose from now on that $p\nleq y_0$ and
  $p\nleq y_1$. Since $p\leq y_0\vee y_1$, there exists a minimal nontrivial join-cover~$I$ of~$p$ in~$L$ refining $\set{y_0,y_1}$. It follows that $I=I_0\cup I_1$, where we set $I_k=I\dnw y_k$ for each
  $k\in\set{0,1}$. For each $k<2$ and each $q\in I_k$, the relation
  $p\bD_Lq$ holds, thus, by assumption, $q\in K$. {}From $q\leq y_k$
  and $q\in K$ it follows that $q\leq\pi(y_k)$; thus $\bigvee
  I_k\leq\pi(y_k)$ as well. Therefore, $p\leq\bigvee I_0\vee\bigvee
  I_1\leq\pi(y_0)\vee\pi(y_1)$.
\end{proof}

\begin{proposition}\label{P:PUnjoinfitsPn}
  The lattice~$\sA_U(n)$ join-fits within~$\sP(n)$, for every positive
  integer~$n$ and every $U\subseteq[n]$. Consequently, $\sA_U(n)$ is a
  lattice-theoretical retract of~$\sP(n)$.
\end{proposition}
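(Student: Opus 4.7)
The plan is to verify the join-fitting condition of Definition~\ref{D:Fits} directly, by combining the explicit descriptions of the join-irreducibles in~$\sP(n)$ and~$\sP_U(n)$ with Proposition~\ref{P:DrelPerm}, and then to deduce the retraction by invoking Lemma~\ref{L:NiceLattQuot}.

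For the join-fitting part I would start with $p\in\J(\sP_U(n))$ and $q\in\J(\sP(n))$ satisfying $p\bD_{\sP(n)}q$. By Lemma~\ref{L:JiisPUn}, $p=\pji{a}{b}{U\res[a,b]}$ for some $(a,b)\in\cI_n$; by Lemma~\ref{L:DescJ(Permn)}, $q=\pji{c}{d}{V}$ for some $(c,d,V)\in\cD_n$. Proposition~\ref{P:DrelPerm} then forces
\[
[c,d]\subsetneqq[a,b]\quad\text{and}\quad V=(U\res[a,b])\res[c,d].
\]
The task reduces to proving the identity $(U\res[a,b])\res[c,d]=U\res[c,d]$, for this will give $q=\pji{c}{d}{U\res[c,d]}=\pUji{c}{d}{U}\in\J(\sP_U(n))\subseteq\sP_U(n)$.

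The identity is a short calculation using the definition in~\eqref{Eq:RestrSubset}. Since $a\leq c<d\leq b$, one has $\oc{c,d}\subseteq\oc{a,b}$, whence
\[
(U\res[a,b])\cap\oc{c,d}=(U\cap\oc{c,d})\cup(\set{b}\cap\oc{c,d}).
\]
As $d\leq b$, the intersection $\set{b}\cap\oc{c,d}$ equals $\set{d}$ when $b=d$ and is empty when $b>d$; in either case, adjoining $\set{d}$ yields $(U\cap\oc{c,d})\cup\set{d}=U\res[c,d]$, which is the desired equality.

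Finally, Corollary~\ref{C:Int(Uclosed)} records that $\sP_U(n)$ is a \jzus\ of~$\sP(n)$, so Lemma~\ref{L:NiceLattQuot} applies and produces a surjective lattice homomorphism $\pi\colon\sP(n)\twoheadrightarrow\sP_U(n)$, namely the lower projection onto~$\sP_U(n)$. Since every $x\in\sP_U(n)$ trivially satisfies $\pi(x)=x$, the map~$\pi$ splits the inclusion $\sP_U(n)\hookrightarrow\sP(n)$, exhibiting $\sP_U(n)$ as a lattice-theoretical retract of~$\sP(n)$. I do not expect a real obstacle here: the argument is pure bookkeeping on the formulas of Section~\ref{S:JirrPerm}, the only subtle point being the boundary case $b=d$ in the restriction calculation, and this is handled by the brief case split above.
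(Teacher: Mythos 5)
Your proof is correct and follows the same route as the paper: identify $p$ and $q$ via Lemma~\ref{L:JiisPUn}, Lemma~\ref{L:DescJ(Permn)}, and Proposition~\ref{P:DrelPerm}, reduce to the identity $(U\res[a,b])\res[c,d]=U\res[c,d]$, and then obtain the retraction from Lemma~\ref{L:NiceLattQuot} together with Corollary~\ref{C:Int(Uclosed)}. The only difference is that you spell out the restriction identity (including the boundary case $b=d$), which the paper leaves implicit.
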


\begin{proof}
  It follows from Corollary~\ref{C:Int(Uclosed)} that~$\sA_U(n)$ is a
  $(0,1)$-sublattice (thus, \emph{a fortiori}, a \jzu-subsemilattice)
  of~$\sP(n)$. Now let $\bp\in\J(\sA_U(n))$ and $\bq\in\J(\sP(n))$
  such that $\bp\bD_{\sP(n)}\bq$. By Lemma~\ref{L:JiisPUn}, there
  exists $(a,b)\in\cI_n$ such that $\bp=\pji{a}{b}{U\res[a,b]}$. By
  Lemma~\ref{L:DescJ(Permn)} and Proposition~\ref{P:DrelPerm}, there
  exists $(c,d)\in\cI_n$ such that $[c,d]\subsetneqq[a,b]$ and
  $\bq=\pji{c}{d}{(U\res[a,b])\res[c,d]}$. Thus
  $\bq=\pji{c}{d}{U\res[c,d]}$ belongs to~$\J(\sA_U(n))$, which
  completes the proof of the join-fitness statement. The retractness
  statement then follows from Corollary~\ref{C:Int(Uclosed)}.
\end{proof}

    For $U=[n]$, we obtain the following corollary. 
    \begin{corollary}\label{C:PUnjoinfitsPn}
The Tamari lattice~$\sA(n)$ is a lattice-theoretical retract of the permutohedron~$\sP(n)$, for every positive integer~$n$.
    \end{corollary}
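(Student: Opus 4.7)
The statement is essentially a direct specialization of the preceding Proposition~\ref{P:PUnjoinfitsPn}. The plan is therefore not to rework any machinery but simply to invoke that proposition at the particular subset $U := [n] \subseteq [n]$.

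More concretely, first I would recall Definition~\ref{D:Alterno}, where the associahedron is introduced as $\sA(n) := \sP_{[n]}(n)$. Then I would apply Proposition~\ref{P:PUnjoinfitsPn} with $U := [n]$, whose conclusion asserts that $\sP_{[n]}(n)$ is a lattice-theoretical retract of $\sP(n)$. Rewriting $\sP_{[n]}(n)$ as $\sA(n)$ via the definition yields exactly the claim.

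There is no genuine obstacle here, since the heavy lifting—the join-fitness of $\sP_U(n)$ inside $\sP(n)$ (via Lemmas~\ref{L:JiisPUn}, \ref{L:DescJ(Permn)}, and Proposition~\ref{P:DrelPerm}), combined with Lemma~\ref{L:NiceLattQuot} that turns join-fitness into a lattice-theoretical retraction—has already been accomplished in Proposition~\ref{P:PUnjoinfitsPn}. The only thing to double-check is the (trivial) point that $[n] \subseteq [n]$ is a valid choice of $U$, so that the proposition applies verbatim. Hence the proof reduces to a single sentence: \emph{Apply Proposition~\textup{\ref{P:PUnjoinfitsPn}} to $U := [n]$ and use Definition~\textup{\ref{D:Alterno}}.}
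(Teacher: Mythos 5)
Your proposal is correct and coincides exactly with the paper's argument: the corollary is obtained by specializing Proposition~\ref{P:PUnjoinfitsPn} to $U:=[n]$ and recalling that $\sA(n)=\sP_{[n]}(n)$ by Definition~\ref{D:Alterno}.
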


    We shall verify soon (cf. Proposition~\ref{P:CambCong}) that the
    lattices~$\sA_U(n)$ are exactly the Cambrian lattices associated
    to~$\sP(n)$. Assuming this result,
    Proposition~\ref{P:PUnjoinfitsPn} gives an alternative proof of
    Reading's result \cite[Theorem~6.5]{Read06} that the Cambrian
    lattices of type~A are retracts of the corresponding
    permutohedra. Let us notice that, while Reading simply states that
    Cambrian lattices of type~A are sublattices of the corresponding
    permutohedra, his proof actually exhibits these sublattices as
    retracts. The analogous statement for Tamari lattices
    (Corollary~\ref{C:PUnjoinfitsPn}) was already observed in
    Bj\"orner and Wachs \cite[Theorem~9.6]{BjWa97}.

The method of proof of Lemma~\ref{P:PUnjoinfitsPn} yields immediately the following.

\begin{lemma}\label{L:DRelPU}
  The equality $\J(\sA_U(n))=\sA_U(n)\cap\J(\sP(n))$ holds, and
  $\bp\bD_{\sA_U(n)}\bq$ if{f} $\bp\bD_{\sP(n)}\bq$, for all
  $\bp,\bq\in\J(\sA_U(n))$. Furthermore,
  $\pUji{a}{b}{U}\bD_{\sA_U(n)}\pUji{c}{d}{U}$ if{f}
  $[c,d]\subsetneqq[a,b]$, for all $(a,b),(c,d)\in\cI_n$.
\end{lemma}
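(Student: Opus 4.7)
My plan is to treat the three assertions in order, bootstrapping each from the explicit description of \jirr\ elements (Lemmas~\ref{L:DescJ(Permn)} and~\ref{L:JiisPUn}), the characterization of $\bD_{\sP(n)}$ in Proposition~\ref{P:DrelPerm}, and the retract structure of Proposition~\ref{P:PUnjoinfitsPn}.

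For the equality $\J(\sP_U(n))=\sP_U(n)\cap\J(\sP(n))$, the inclusion from left to right is immediate since the triple $(a,b,U\res[a,b])$ belongs to~$\cD_n$, so Lemma~\ref{L:DescJ(Permn)} places every $\pUji{a}{b}{U}=\pji{a}{b}{U\res[a,b]}$ in~$\J(\sP(n))$. For the reverse inclusion, given $\bp\in\sP_U(n)\cap\J(\sP(n))$, I would write $\bp=\pji{a}{b}{V}$ with $(a,b,V)\in\cD_n$ and apply the defining $\sD_U(n)$ condition to the pair $(a,b)\in\bp$ at each intermediate point $j\in\oo{a,b}$; this forces $j\in V$ exactly when $j\in U$, and combined with $a\notin V$, $b\in V$, and $V\subseteq[a,b]$ gives $V=U\res[a,b]$, so $\bp=\pUji{a}{b}{U}$.

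Next, to prove $\bp\bD_{\sP_U(n)}\bq\Leftrightarrow\bp\bD_{\sP(n)}\bq$ on $\J(\sP_U(n))$, I would first verify that the unique lower cover of each $\bq=\pji{a}{b}{V}\in\J(\sP_U(n))$ coincides in the two lattices. Lemma~\ref{L:jirrSingl} gives $\bq_{*,\sP(n)}=\bq\setminus\set{(a,b)}$, and a short check shows this set still belongs to~$\sP_U(n)$: transitivity is trivial because no two distinct elements of~$\pji{a}{b}{V}$ are composable (that would force the middle entry to lie both inside and outside~$V$), and the $\sD_U(n)$ condition for $\bq\setminus\set{(a,b)}$ is inherited from that for~$\bq$, since for any pair $(i,k)\neq(a,b)$ in~$\bq$ the element forced by~$\sD_U(n)$ is itself never $(a,b)$. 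By Corollary~\ref{C:Int(Uclosed)} joins in~$\sP_U(n)$ and~$\sP(n)$ also agree, so the implication $\bp\bD_{\sP_U(n)}\bq\Rightarrow\bp\bD_{\sP(n)}\bq$ is immediate: any witness in~$\sP_U(n)$ remains a witness in~$\sP(n)$. For the converse, given a witness $\bx\in\sP(n)$, I would set $\bx':=\pi(\bx)$, where $\pi\colon\sP(n)\twoheadrightarrow\sP_U(n)$ is the lower projection; by Proposition~\ref{P:PUnjoinfitsPn} together with Lemma~\ref{L:NiceLattQuot}, $\pi$ is a lattice homomorphism fixing~$\sP_U(n)$ pointwise, so applying~$\pi$ to $\bp\leq\bq\vee\bx$ yields $\bp\leq\bq\vee\bx'$, while $\bx'\leq\bx$ preserves the non-inequality $\bp\nleq\bq_*\vee\bx'$.

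Finally, the explicit form in the last assertion drops out of Proposition~\ref{P:DrelPerm}. Writing $\bp=\pji{a}{b}{U\res[a,b]}$ and $\bq=\pji{c}{d}{U\res[c,d]}$, that proposition reduces $\bp\bD_{\sP(n)}\bq$ to the conjunction of $[c,d]\subsetneqq[a,b]$ with $U\res[c,d]=(U\res[a,b])\res[c,d]$. A direct unfolding based on $\oc{c,d}\subseteq\oc{a,b}$, together with a brief case split on whether $b=d$ to handle the adjoined endpoint in $U\res[a,b]$, shows the second equation is automatic whenever $[c,d]\subseteq[a,b]$. I expect the main obstacle to be purely bookkeeping: keeping the restriction operation and the boundary points of the various $\cD_n$-triples aligned correctly.
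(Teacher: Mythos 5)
Your proof is correct and complete. The paper offers no written argument for this lemma --- it merely notes that ``the method of proof of Lemma~\ref{P:PUnjoinfitsPn} yields immediately'' the statement --- and what you have written is a faithful, fully detailed elaboration of exactly that method: Lemmas~\ref{L:DescJ(Permn)} and~\ref{L:JiisPUn} for the identification of the \jirr\ elements, Proposition~\ref{P:DrelPerm} together with the computation $(U\res[a,b])\res[c,d]=U\res[c,d]$ for the explicit form of the relation, with your verification that lower covers agree in the two lattices and your use of the lower projection~$\pi$ correctly supplying the details of the equivalence between $\bD_{\sP_U(n)}$ and $\bD_{\sP(n)}$ that the paper leaves implicit.
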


Denote by~$\pi_U\colon\sP(n)\twoheadrightarrow\sA_U(n)$ the canonical projection (defined by $\pi_U(\bx)=$ largest element of~$\sA_U(n)$ contained in~$\bx$). By Proposition~\ref{P:PUnjoinfitsPn}, $\pi_U$ is a lattice homomorphism.

\begin{proposition}\label{P:DecompPU(n)}
Every lattice~$\sA_U(n)$ is subdirectly irreducible, and the diagonal map $\pi\colon\sP(n)\to\prod\famm{\sA_U(n)}{U\subseteq[n]}$, $\bx\mapsto\famm{\pi_U(\bx)}{U\subseteq[n]}$ is a subdirect product decomposition of the permutohedron~$\sP(n)$.
\end{proposition}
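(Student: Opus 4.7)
The plan is to split the statement into two independent pieces: (a)~subdirect irreducibility of each factor~$\sP_U(n)$, and (b)~injectivity of the diagonal map~$\pi$ (surjectivity of each component~$\pi_U$ is already known, since~$\pi_U$ is a retraction by Proposition~\ref{P:PUnjoinfitsPn}, so each projection of the diagonal is surjective and~$\pi$ will indeed be a subdirect embedding once injective).

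\textbf{Part~(a): subdirect irreducibility.}
I would invoke the classical correspondence between congruences and join-dependency on finite bounded lattices (cf.~\cite[Theorem~2.30]{FJN}): a finite bounded lattice~$L$ is subdirectly irreducible if{f} there exists a unique $p\in\J(L)$ admitting no $\bD_L$-predecessor, i.e., no $q\in\J(L)$ with $q\neq p$ and $q\,\bD_L\,p$. For $L=\sP_U(n)$, Lemma~\ref{L:DRelPU} shows that, under the parametrization $\pUji{c}{d}{U}\leftrightarrow[c,d]$ of $\J(\sP_U(n))$ by~$\cI_n$, the relation $\bD_{\sP_U(n)}$ is nothing but the strict reverse-inclusion order on the intervals $[c,d]\subseteq[1,n]$ with $c<d$. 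The unique maximal such interval is $[1,n]$, so the unique join-irreducible without $\bD$-predecessor is $\pUji{1}{n}{U}$, and $\sP_U(n)$ is subdirectly irreducible.

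\textbf{Part~(b): injectivity of~$\pi$.}
Let $\bx\neq\by$ in~$\sP(n)$, and pick $\bp\in\J(\sP(n))$ separating them, say $\bp\leq\bx$ and $\bp\nleq\by$. By Lemma~\ref{L:DescJ(Permn)}, $\bp=\pji{a}{b}{V}$ for some $(a,b,V)\in\cD_n$. Viewing $V$ as a subset $U$ of $[n]$, the conditions $V\subseteq[a,b]$, $a\notin V$, and $b\in V$ give $U\res[a,b]=V$, so Lemma~\ref{L:JiisPUn} identifies $\bp$ with the join-irreducible $\pUji{a}{b}{U}$ of $\sP_U(n)$. Because $\pi_U(\bx)$ is the largest element of $\sP_U(n)$ contained in~$\bx$ and $\bp\in\sP_U(n)$ with $\bp\leq\bx$, we get $\bp\leq\pi_U(\bx)$; dually, $\pi_U(\by)\leq\by$ together with $\bp\nleq\by$ yields $\bp\nleq\pi_U(\by)$. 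Hence $\pi_U(\bx)\neq\pi_U(\by)$, so $\pi$ is injective.

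\textbf{Main obstacle.}
The delicate step is part~(a): one must apply the congruence--$\bD$ correspondence in the correct direction (it is tempting to confuse $\bD$-sources with $\bD$-sinks) and extract from the description of the ordering of principal congruences the concrete combinatorial criterion for subdirect irreducibility. Part~(b) is essentially a matter of bookkeeping once Proposition~\ref{P:PUnjoinfitsPn} and Lemmas~\ref{L:DescJ(Permn)} and~\ref{L:JiisPUn} are in hand.
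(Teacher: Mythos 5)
Your proof is correct and follows essentially the same route as the paper's: both parts rest on Lemma~\ref{L:DRelPU} for the combinatorics of~$\bD_{\sP_U(n)}$, and injectivity of~$\pi$ is obtained exactly as in the paper by separating $\bx\nleq\by$ with a \jirr\ $\bp=\pji{a}{b}{V}=\pUji{a}{b}{V}$ that survives in the factor~$\sP_V(n)$. The only cosmetic difference is in part~(a): the paper invokes the criterion that $\pUji{1}{n}{U}$ is the $\utr$-least \jirr\ element (Freese--Je\v{z}ek--Nation, Corollary~2.37), whereas you use the formulation via a unique \jirr\ element with no $\bD$-predecessor, which is equivalent for bounded (hence $\bD$-cycle-free) lattices such as~$\sP_U(n)$ --- and your attention to the direction of the congruence--$\bD$ correspondence is warranted and correctly handled.
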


\begin{proof}
  It follows from Lemma~\ref{L:DRelPU} that $\pUji{1}{n}{U}$ is the
  least \jirr\ element of~$\sA_U(n)$ with respect to the transitive
  closure of the relation~$\bD_{\sA_U(n)}$. Consequently, by Freese,
  Je\v{z}ek, and Nation \cite[Corollary~2.37]{FJN}, $\sA_U(n)$ is
  subdirectly irreducible.
  
  It remains to prove that the map~$\pi$ is one-to-one. Let
  $\ba,\bb\in\sP(n)$ such that $\ba\not\subseteq\bb$. By
  Lemma~\ref{L:DescJ(Permn)}, there exists $(i,j,U)\in\cF_n$ such that
  the element $\bp=\pji{a}{b}{U}$ is contained in~$\ba$ but not
  in~$\bb$. Now $\bp=\pUji{a}{b}{U}$ belongs to $\J(\sA_U(n))$, thus
  $\bp\in\pi_U(\ba)\setminus\pi_U(\bb)$, and thus
  $\pi_U(\ba)\not\subseteq\pi_U(\bb)$.
\end{proof}

For a \jirr\ element~$p$ in a finite lattice~$L$, we set
 \begin{align*}
 \Theta_L(p)&=\text{least congruence of }L\text{ that identifies }
 p\text{ and }p_*\,,\\
 \Psi_L(p)&=\text{largest congruence of }L\text{ that does not identify }
 p\text{ and }p_*\,.
 \end{align*}
We shall also write $\Theta(p)$, $\Psi(p)$ in case the lattice~$L$ is understood. It follows from Freese, Je\v{z}ek, and Nation \cite[Theorem~2.30]{FJN} that the \jirr\ congruences of~$L$ are exactly those of the form~$\Theta_L(p)$, while the \mirr\ congruences of~$L$ are exactly those of the form~$\Psi_L(p)$.

The following lemma gives a description of the kernel of~$\pi_U$ in terms of the \jirr\ elements of~$\sP(n)$.

\begin{lemma}\label{L:CongPU}
The kernel of~$\pi_U$ is equal to $\Psi_{\sP(n)}(\pji{1}{n}{U})$, for each $U\subseteq[n]$.
\end{lemma}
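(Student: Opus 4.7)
The plan is to prove the two inclusions separately. The main preliminary observation is that both $\pji{1}{n}{U}$ and its lower cover in~$\sP(n)$, namely $(\pji{1}{n}{U})_*=\pji{1}{n}{U}\setminus\set{(1,n)}$ (cf. Lemma~\ref{L:jirrSingl}), already lie in~$\sP_U(n)$, so that~$\pi_U$ fixes them both; this will immediately yield the inclusion $\ker(\pi_U)\subseteq\Psi_{\sP(n)}(\pji{1}{n}{U})$.

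To verify this, note first that since $(1,n,U)\in\cD_n$ forces $1\notin U$ and $n\in U$, one has $U\res[1,n]=U$, so Lemma~\ref{L:JiisPUn} yields $\pji{1}{n}{U}=\pUji{1}{n}{U}\in\J(\sP_U(n))$. For $(\pji{1}{n}{U})_*$, I would verify the defining conditions of~$\sP_U(n)$ directly: any pair $(i,j),(j,k)\in\pji{1}{n}{U}$ would force $j\in U\cap([1,n]\setminus U)=\es$, so transitivity is vacuous; and whenever $(i,k)\in(\pji{1}{n}{U})_*$ with $i<j<k$, the element $(i,j)$ (if $j\in U$) or $(j,k)$ (if $j\notin U$) required by the $\sD_U(n)$-condition is automatically in~$\pji{1}{n}{U}$, and differs from $(1,n)$ because $j<n$ in the first case and $j>1$ in the second. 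Once both elements are known to lie in~$\sP_U(n)$, we have $\pi_U(\pji{1}{n}{U})=\pji{1}{n}{U}$ and $\pi_U((\pji{1}{n}{U})_*)=(\pji{1}{n}{U})_*$, so $\ker(\pi_U)$ does not identify this cover, whence $\ker(\pi_U)\subseteq\Psi_{\sP(n)}(\pji{1}{n}{U})$.

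For the reverse inclusion I would invoke the subdirect irreducibility of~$\sP_U(n)$ established in Proposition~\ref{P:DecompPU(n)}. This ensures that $\ker(\pi_U)$ is meet-irreducible in the (distributive) congruence lattice of~$\sP(n)$, with a unique upper cover~$\theta$ that corresponds, under the canonical isomorphism between the congruence lattice of $\sP(n)/\ker(\pi_U)$ and the interval above $\ker(\pi_U)$, to the monolith of~$\sP_U(n)$. That monolith is precisely $\Theta_{\sP_U(n)}(\pji{1}{n}{U})$: by Lemma~\ref{L:DRelPU} the element $\pji{1}{n}{U}$ is the minimum \jirr\ element of~$\sP_U(n)$ for the transitive closure of~$\bD_{\sP_U(n)}$, and \cite[Corollary~2.37]{FJN} then identifies the monolith. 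Since~$\pi_U$ fixes both $\pji{1}{n}{U}$ and $(\pji{1}{n}{U})_*$, the upper cover~$\theta$ must identify them in~$\sP(n)$. Consequently, any congruence~$\alpha$ properly containing~$\ker(\pi_U)$ contains~$\theta$, and so identifies this cover; this shows that $\ker(\pi_U)$ is maximal among congruences not identifying $\pji{1}{n}{U}$ with $(\pji{1}{n}{U})_*$, which is the defining property of $\Psi_{\sP(n)}(\pji{1}{n}{U})$.

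The main obstacle I anticipate is the bookkeeping in the membership check of $(\pji{1}{n}{U})_*$ in~$\sP_U(n)$; the rest is standard manipulation of meet-irreducible congruences and monoliths of subdirectly irreducible quotients, already available in the background material from Section~\ref{S:NotaTerm}.
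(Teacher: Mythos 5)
Your proof is correct, and the first half (showing that $\Ker\pi_U$ does not collapse the prime interval $\cc{\bp_*,\bp}$, where $\bp:=\pji{1}{n}{U}$) is essentially the paper's argument --- the paper only needs $\bp\in\sP_U(n)$ and the chain $\pi_U(\bp)=\bp>\bp_*\geq\pi_U(\bp_*)$, whereas you additionally verify $\bp_*\in\sP_U(n)$, which you genuinely need later. For the reverse inclusion your route is different from the paper's. The paper argues at the level of \jirr\ congruences: it takes any $\psi=\Theta_{\sP(n)}(\bq)$ with $\bq=\pji{c}{d}{V}$ not collapsing $\cc{\bp_*,\bp}$, translates this \emph{via} the join-dependency relation (Proposition~\ref{P:DrelPerm} and \cite[Lemma~2.36]{FJN}) into $V\neq U\res[c,d]$, deduces $\bq\notin\sP_U(n)$, and concludes $\pi_U(\bq)=\pi_U(\bq_*)$, so $\psi\subseteq\Ker\pi_U$. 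You instead use the subdirect irreducibility of $\sP_U(n)$ (Proposition~\ref{P:DecompPU(n)}, proved beforehand, so there is no circularity) to see that $\Ker\pi_U$ is completely \mirr\ with unique upper cover pulled back from the monolith $\Theta_{\sP_U(n)}(\bp)$; since $\bp_*\in\sP_U(n)$ is the lower cover of $\bp$ there as well, that upper cover collapses $\cc{\bp_*,\bp}$, so any congruence strictly above $\Ker\pi_U$ does too, forcing $\Ker\pi_U=\Psi_{\sP(n)}(\bp)$. Your argument is shorter and more structural, at the cost of the explicit membership check for $\bp_*$; the paper's computation has the side benefit of identifying exactly which \jirr\ congruences $\Theta(\bq)$ lie below $\Ker\pi_U$ (those with $V\neq U\res[c,d]$), a description that is reused immediately afterwards in the proof of Proposition~\ref{P:CambCong}. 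The only cosmetic point is that, as in the paper, you should state explicitly at the outset that one may normalize $U$ so that $1\notin U$ and $n\in U$ (legitimate since $\sD_U(n)=\sD_{U\setminus\set{1,n}}(n)$), rather than merely asserting $(1,n,U)\in\cD_n$.
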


\begin{proof}
  Since the definition of~$\sA_U(n)$ depends only of
  $U\setminus\set{1,n}$, we may assume that $1\notin U$ and $n\in U$,
  that is, $(1,n,U)\in\cF_n$. Set $\theta=\Ker\pi_U$ and
  $\bp=\pji{1}{n}{U}$. Since~$\bp$ belongs to~$\sA_U(n)$,
  $\pi_U(\bp)=\bp>\bp_*\geq\pi_U(\bp_*)$, thus
  $\bp\not\equiv\bp_*\pmod{\theta}$. Conversely, we need to prove that
  every congruence~$\psi$ of~$\sP(n)$ such that
  $\bp\not\equiv\bp_*\pmod{\psi}$ is contained in~$\theta$. We may
  assume that~$\psi$ is \jirr, so $\psi=\Theta_{\sP(n)}(\bq)$, with
  $\bq=\pji{c}{d}{V}$ for some $(c,d,V)\in\cF_n$ (cf.
  Lemma~\ref{L:DescJ(Permn)}).
  
  Denoting by~$\utr$ the reflexive and transitive closure of the
  relation~$\bD_{\sP(n)}$, $\bp\not\equiv\bp_*\pmod{\psi}$ means that
  $\bp\not\utr\bq$ (cf. Freese, Je\v{z}ek, and Nation
  \cite[Lemma~2.36]{FJN}), that is, using
  Proposition~\ref{P:DrelPerm}, $V\neq U\res[c,d]$. It follows easily
  that~$\bq$ does not belong to~$\sA_U(n)$, thus
  $\pi_U(\bq)\leq\bq_*$, $\pi_U(\bq)=\pi_U(\bq_*)$, so
  $(\bq,\bq_*)\in\theta$, that is, $\psi\subseteq\theta$.
\end{proof}

As we shall verify soon, the lattices~$\sA_U(n)$ are identical to the \emph{Cambrian lattices of type A} introduced in
Reading~\cite{Read06}. This result is, actually, already contained in results from Reading \cite{Read07a,Read07b}. We shall now give an outline of how this works.

Recall first how Cambrian lattices of type~A are defined. For an integer~$n\geq2$ (if $n=1$ then everything is trivial), we set $s_i=\begin{pmatrix}i&i+1\end{pmatrix}$ for $1\leq i<n$. The Dynkin diagram of~$\fS_n$ is the undirected graph having as vertices the~$s_{i}$ and, as edges, the pairs $\set{s_{i-1},s_{i}}$ for $i = 2,\ldots ,n-1$.
Informally, an orientation of the Dynkin diagram of~$\fS_n$
consists of a choice, for each index~$i\in[n-2]$, of an orientation
between the two vertices~$s_i$ and~$s_{i+1}$: that is,
either~$s_i\rightarrow s_{i+1}$ or $s_i\leftarrow s_{i+1}$. Hence the
orientation is encoded by a subset of $\set{2,3,\dots,n-1}$, namely
 \[
 U=\setm{i+1}{1\leq i\leq n-2\text{ and }s_i\rightarrow s_{i+1}}\,.
 \]
The \emph{Cambrian congruence} associated to~$U$ is the lattice congruence~$\eta$ of~$\sP(n)$ generated by all pairs $s_{i+1}\equiv s_{i+1}s_i\pmod{\eta}$ if $i+1\in U$, and $s_i\equiv s_is_{i+1}\pmod{\eta}$ if $i+1\notin U$. Now, identifying a permutation with its set of inversions as defined in Section~\ref{S:BasicPerm}, we obtain that the Cambrian congruence~$\eta$ is generated by the pairs
 \begin{align*}
 \set{(i+1,i+2)}&\equiv\set{(i+1,i+2),(i,i+2)}\pmod{\eta}\,,
 &&\text{if }i+1\in U\,,\\
 \set{(i,i+1)}&\equiv\set{(i,i+1),(i,i+2)}\pmod{\eta}\,,&&\text{if }i+1\notin U\,. 
 \end{align*}
The associated \emph{Cambrian lattice} is defined as~$\sP(n)/{\eta}$.

According to Theorems~1.1 and~1.4 in Reading~\cite{Read07b}, the
``$c$-sortable'' elements of~$\sP(n)$, where~$c$ denotes a Coxeter
word associated to the given orientation, are exactly the bottom
elements of the $c$-Cambrian congruence, denoted there by~$\Theta_c$
and identical to our congruence~$\eta$.  On the other hand, Reading
introduces in \cite[Section~4]{Read07a} the ``$c$-aligned''
elements. By \cite[Lemma~4.8]{Read07a}, the $c$-aligned elements
of~$\sP(n)$ are exactly the elements of~$\sA_U(n)$. By Reading
\cite[Theorem~4.1]{Read07a}, ``$c$-sortable'' is the same as
``$c$-aligned''. This shows that the Cambrian lattices of type~A are
exactly the lattices~$\sA_U(n)$. We give, for the reader's
convenience, a direct proof of that fact below.

\begin{proposition}\label{P:CambCong}
The Cambrian congruence associated to~$U$ is the kernel of~$\pi_U$. Consequently, the associated Cambrian lattice is~$\sA_U(n)$.
\end{proposition}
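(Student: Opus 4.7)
The plan is to prove both inclusions. For $\eta\subseteq\Ker\pi_U$, I would verify that $\pi_U$ identifies each generating pair. In the case $i+1\in U$, the pair is $\bx:=\set{(i+1,i+2)}$ and $\by:=\set{(i+1,i+2),(i,i+2)}$. Now $\bx\in\sP_U(n)$, whereas $\by\notin\sD_U(n)$ because $(i,i+2)\in\by$ with $i+1\in U$ would force $(i,i+1)\in\by$. Hence $\bx$ is the largest element of $\sP_U(n)$ contained in $\by$, so $\pi_U(\bx)=\pi_U(\by)=\bx$; the case $i+1\notin U$ is analogous, with the roles of $(i,i+1)$ and $(i+1,i+2)$ interchanged.

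For the reverse inclusion $\Ker\pi_U\subseteq\eta$, I would exploit the fact that in a finite lattice every congruence $\alpha$ is determined by the set $S(\alpha):=\setm{\bp\in\J(\sP(n))}{\bp\equiv\bp_*\pmod{\alpha}}$, via $\alpha=\bigvee_{\bp\in S(\alpha)}\Theta(\bp)$, so $\alpha\subseteq\beta$ iff $S(\alpha)\subseteq S(\beta)$. Moreover, $\Theta(\bp)\subseteq\Theta(\bq)$ iff $\bp\utr\bq$ (by \cite[Lemma~2.36]{FJN}, already invoked in the proof of Lemma~\ref{L:CongPU}), so $S(\alpha)$ is closed under $\utr$-ancestors. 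From Lemma~\ref{L:CongPU} and its proof, $S(\Ker\pi_U)=\setm{\pji{c}{d}{V}\in\J(\sP(n))}{V\neq U\res[c,d]}$.

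Unpacking the generators of $\eta$ in terms of Notation~\ref{Not:DabU}, one identifies the relevant ``seeds'' in $S(\eta)$: the pair $(\bx,\by)$ above equals $(\pji{i}{i+2}{\set{i+2}}_*,\pji{i}{i+2}{\set{i+2}})$ by Lemma~\ref{L:jirrSingl}, so $\pji{i}{i+2}{\set{i+2}}\in S(\eta)$ whenever $i+1\in U$; dually, $\pji{i}{i+2}{\set{i+1,i+2}}\in S(\eta)$ whenever $i+1\notin U$. It then suffices to show that every $\bp=\pji{c}{d}{V}\in S(\Ker\pi_U)$ is a $\utr$-ancestor of such a seed. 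Since $V\neq U\res[c,d]$ one may pick $j\in\oo{c,d}$ with $(j\in U)\neq(j\in V)$. If $[j-1,j+1]\subsetneq[c,d]$, Proposition~\ref{P:DrelPerm} yields $\bp\bD_{\sP(n)}\pji{j-1}{j+1}{V\res[j-1,j+1]}$, and a short computation shows that $V\res[j-1,j+1]$ equals $\set{j+1}$ (when $j\in U$, $j\notin V$) or $\set{j,j+1}$ (when $j\notin U$, $j\in V$), matching a seed. Otherwise $[c,d]=[j-1,j+1]$, and the constraints on $V$ force $\bp$ itself to be a seed.

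The main obstacle is the bookkeeping in the second inclusion: one must correctly match the four subcases (according to whether $j\in U$ and whether $[j-1,j+1]$ is strictly inside $[c,d]$) with the two types of seed. Once Proposition~\ref{P:DrelPerm} is brought in, each subcase reduces to a direct computation using the definition of $U\res[-,-]$. The final sentence of the proposition then follows from Proposition~\ref{P:PUnjoinfitsPn}, which identifies $\sP(n)/\Ker\pi_U$ with $\sP_U(n)$.
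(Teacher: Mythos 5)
Your proof is correct, and while the first inclusion ($\eta\subseteq\Ker\pi_U$) is identical to the paper's argument, your second inclusion takes a genuinely different route. Both you and the paper reduce, via Lemma~\ref{L:CongPU}, to showing $\Theta(\bq)\subseteq\eta$ for each $\bq=\pji{c}{d}{V}$ with $V\neq U\res[c,d]$. The paper then does an explicit element chase: picking $i$ in the symmetric difference, it builds a join $\bq\leq\pUji{c}{i-1}{V}\vee\pUji{i-1}{i+1}{V}\vee\pUji{i+1}{d}{V}$, replaces the middle term modulo a generator of $\eta$ to get an element $\bx$ with $(c,d)\notin\bx$, and concludes $\bq\equiv\bq_*\pmod\eta$. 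You instead recognize each generating pair of $\eta$ as $(\bq_*,\bq)$ for an explicit \jirr\ ``seed'' ($\pji{i}{i+2}{\set{i+2}}$ or $\pji{i}{i+2}{\set{i+1,i+2}}$, correctly matched via Lemma~\ref{L:jirrSingl}), and then invoke Proposition~\ref{P:DrelPerm} together with the $\utr$-characterization of $\Theta(\bp)\subseteq\Theta(\bq)$ from \cite[Lemma~2.36]{FJN} to show that every \jirr\ element collapsed by $\Ker\pi_U$ is either a seed or $\bD$-related to one; the computation $V\res[j-1,j+1]=\set{j+1}$ or $\set{j,j+1}$ checks out, as does the exhaustiveness of the dichotomy $[j-1,j+1]\subsetneqq[c,d]$ versus $[j-1,j+1]=[c,d]$. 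What each approach buys: yours offloads the work to the already-established description of the join-dependency relation, making the final step a clean bookkeeping exercise on restricted subsets and making transparent that $\eta$ and $\Ker\pi_U$ collapse exactly the same \jirr\ elements; the paper's is more self-contained at that point, constructing the witnessing element directly from the lattice operations and the generators of $\eta$ without re-entering the $\bD$-relation machinery.
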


\begin{proof}
Set again $\theta=\Ker\pi_U$.

Let $i\in[n-2]$. Suppose first that $i+1\in U$. For each $\bx\in\sA_U(n)$ with $\bx\subseteq\set{(i+1,i+2),(i,i+2)}$, the possibility that $(i,i+2)\in\bx$ is ruled out for it would imply (as $i+1\in U$) that $(i,i+1)\in\bx$, \contr; hence $\bx\subseteq\set{(i+1,i+2)}$, and hence
 \[
 \set{(i+1,i+2)}\equiv\set{(i+1,i+2),(i,i+2)}\pmod{\theta}\,.
 \]
Similarly, we can prove that if $i+1\notin U$, then
 \[
 \set{(i,i+1)}\equiv\set{(i,i+1),(i,i+2)}\pmod{\theta}\,.
 \]
It follows that~$\theta$ contains~$\eta$.

In order to establish the converse containment, remember from
Lemma~\ref{L:CongPU} that~$\theta$ is generated by all~$\Theta(\bq)$,
where $\bq=\pji{c}{d}{V}\in\J(\sP(n))$ with $(c,d,V)\in\cF_n$ and
$V\neq U\res[c,d]$. Hence it suffices to prove that
$\bq\equiv\bq_*\pmod{\eta}$ for each such~$\bq$. We separate cases. If
$U\res[c,d]\not\subseteq V$, pick~$i$ in the difference; observe that
$c<i<d$. {}From $i\in U$ it follows that $\set{(i,i+1)}\equiv\set{(i,i+1),(i-1,i+1)}\pmod{\eta}$,
that is, as $i\notin V$, $\pUji{i}{i+1}{V}\equiv\pUji{i-1}{i+1}{V}\pmod{\eta}$.
Thus, setting $\pUji{k}{k}{V}=\es$ for each~$k$, we get
 \[
 \bq=\pUji{c}{d}{V}\leq\pUji{c}{i-1}{V}\vee\pUji{i-1}{i+1}{V}\vee\pUji{i+1}{d}{V}\equiv\bx\pmod{\eta}
 \]
 where we set
 $\bx=\pUji{c}{i-1}{V}\vee\pUji{i}{i+1}{V}\vee\pUji{i+1}{d}{V}$.
 {}From $(c,d)\notin\bx$ it follows that $\bq\not\subseteq\bx$, thus
 $\bq\equiv\bq_*\pmod{\eta}$, as desired. The proof in case
 $V\not\subseteq U\res[c,d]$ is similar, now picking an index~$i\in
 V\setminus(U\res[c,d])$ and obtaining, this time,
 elements~$\by,\by'\in\sP(n)$ such that $(i,d)\notin\by$ and
 $\bq\leq\by'\equiv\by\pmod{\eta}$.
\end{proof}

Since the elements $\pUji{1}{n}{U}$ are exactly the \emph{minimal} elements of $\J(\sP(n))$ with respect to the transitive closure~$\utr$ of the join-dependency relation, a straightforward application of Freese, Je\v{z}ek, and Nation \cite[Lemma~2.36]{FJN} yields the following.

\begin{corollary}\label{C:CambCong}
The Cambrian lattices of type~A are exactly the quotients of permutohedra by their minimal \mirr\ congruences.
\end{corollary}

The following consequence of Lemma~\ref{L:CongPU} can be obtained, \emph{via} Proposition~\ref{P:CambCong}, from Reading \cite[Theorem~3.5]{Read06}. We show here an easy, direct argument.

\begin{corollary}
\label{C:PU(n)dual}
The lattices~$\sA_U(n)$ and~$\sA_{[n]\setminus U}(n)$ are dually isomorphic, for each $U\subseteq[n]$.
\end{corollary}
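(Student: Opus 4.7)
The plan is to exploit the involutive dual automorphism $\Phi\colon\sP(n)\to\sP(n)$, $\bx\mapsto\bx^\cpl$, furnished by Proposition~\ref{P:Perm(n)compl}, and to show that it interchanges the Cambrian congruences associated to~$U$ and to~$[n]\setminus U$. As a preliminary, I note that for any involutive dual automorphism~$\Phi$ of a lattice~$L$ and any congruence~$\theta$ of~$L$, the pullback $\Phi^{*}\theta:=\setm{(\bx,\by)\in L^2}{(\Phi\bx,\Phi\by)\in\theta}$ is again a congruence of~$L$, and~$\Phi$ descends to a dual lattice isomorphism $L/(\Phi^{*}\theta)\to L/\theta$, $[\bx]\mapsto[\Phi\bx]$. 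Applied to $L=\sP(n)$ and $\theta=\Ker\pi_{[n]\setminus U}$, the corollary will follow once I establish the single identity
\[
  \Phi^{*}(\Ker\pi_{[n]\setminus U})=\Ker\pi_{U}\,.
\]

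To prove this identity, I would normalize representatives using the convention preceding Lemma~\ref{L:CongPU}: take $U$ in canonical form so that $1\notin U$ and $n\in U$, and replace $[n]\setminus U$ by $V:=(\{2,\dots,n-1\}\setminus U)\cup\{n\}$. Since $V\setminus\{1,n\}=([n]\setminus U)\setminus\{1,n\}$, one has $\sP_V(n)=\sP_{[n]\setminus U}(n)$, while $(1,n,V)\in\cD_n$; so Lemma~\ref{L:CongPU} applies and gives $\Ker\pi_{U}=\Psi_{\sP(n)}(\pji{1}{n}{U})$ and $\Ker\pi_{V}=\Psi_{\sP(n)}(\pji{1}{n}{V})$. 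The crucial input, essentially already computed in the text, is the formula $\kappa_{\sP(n)}(\pji{1}{n}{U})=\pji{1}{n}{V}^\cpl$, which is exactly the $\kappa$-computation spelled out just after Lemma~\ref{L:ArrPrecJirr}: our~$V$ coincides with the $\widetilde{U}$ of that passage.

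Finally, I unfold definitions. By its very construction, $\Phi^{*}\Psi_{\sP(n)}(\pji{1}{n}{V})$ is the largest congruence of~$\sP(n)$ that does not collapse the covering pair $(\pji{1}{n}{V}^\cpl,\pji{1}{n}{V}_{*}^\cpl)$, whose bottom is the \mirr\ element $\pji{1}{n}{V}^\cpl=\kappa_{\sP(n)}(\pji{1}{n}{U})$ and whose top is its unique upper cover in~$\sP(n)$ (coming from Lemma~\ref{L:jirrSingl} transported through~$\Phi$). The main obstacle is the last step: one must invoke, as a standard consequence of \cite[Theorem~2.56]{FJN} and its dual applied in the semidistributive lattice~$\sP(n)$, the $\kappa/\kappa^\op$ matching between covering pairs $(p_{*},p)$ for $p\in\J(\sP(n))$ and covering pairs $(u,u^{*})$ for $u\in\M(\sP(n))$, which implies that the congruence $\Psi_{\sP(n)}(p)$ also equals the largest congruence not collapsing $(\kappa(p),\kappa(p)^{*})$. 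Applied with $p:=\pji{1}{n}{U}$ and $u:=\kappa_{\sP(n)}(p)=\pji{1}{n}{V}^\cpl$, this yields the equality $\Phi^{*}\Psi_{\sP(n)}(\pji{1}{n}{V})=\Psi_{\sP(n)}(\pji{1}{n}{U})$, completing the proof.
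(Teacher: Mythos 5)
Your proof is correct and follows essentially the same route as the paper's: both pass through the complementation dual automorphism of $\sP(n)$, normalize $U$ so that $(1,n,U)\in\cD_n$, identify $\kappa_{\sP(n)}(\pji{1}{n}{U})=\pji{1}{n}{\widetilde{U}}{}^\cpl$ with $\widetilde{U}$ equal to your $V$, apply Lemma~\ref{L:CongPU}, and use the perspectivity of the prime intervals $[\bp_*,\bp]$ and $[\kappa(\bp),\kappa(\bp)^*]$ to transport $\Psi$-congruences across the duality. You merely spell out more explicitly the congruence-pullback formalism that the paper compresses into ``$\Ker\pi_U=\gamma(\Ker\pi_{\widetilde U})$''.
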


\begin{proof}
Denote by $\gamma\colon\sP(n)\to\sP(n)$, $\bx\mapsto\bx^\cpl$ the canonical dual automorphism (cf. Proposition~\ref{P:Perm(n)compl}).
Again, we may assume that $1\notin U$ and $n\in U$.
Set $\bp=\pji{1}{n}{U}$, $\widetilde{U}=(\oo{1,n}\setminus U)\cup\set{n}$ and $\bq=\pji{1}{n}{\widetilde{U}}$. As observed after the statement of Lemma~\ref{L:ArrPrecJirr}, $\kappa_{\sP(n)}(\bp)=\gamma(\bq)$. It follows that the prime interval $[\bp_*,\bp]$ projects up to the interval $[\gamma(\bq),\gamma(\bq)^*]$, hence, as~$\gamma$ is a dual automorphism and using Lemma~\ref{L:CongPU}, $\Ker\pi_U=\gamma(\Ker\pi_{\widetilde{U}})$, and hence $\sA_U(n)\cong\sP(n)/{\Ker\pi_U}$ is dually isomorphic to $\sP(n)/{\Ker\pi_{\widetilde{U}}}\cong\sA_{\widetilde{U}}(n)=\sA_{[n]\setminus U}(n)$.
\end{proof}

In particular, since the Tamari lattice~$\sA(n)$ is self-dual, it is isomorphic to both~$\sA_\es(n)$ and to~$\sA_{[n]}(n)$.

\section{The Gazpacho identities}\label{S:Gazpacho}

In this section we shall construct an infinite collection of lattice-theoretical identities, the \emph{Gazpacho identities}, and prove that these identities hold in every Tamari lattice (Theorem~\ref{T:GzpinAssoc}).

We denote by~$\SS$ the set of all finite sequences $\vec{m}=(m_1,\dots,m_d)$ of positive integers with $d\geq2$, and we set
 \[
 \fF(\vec m)=\prod\famm{[m_i]}{1\leq i\leq d}\,,
 \quad\text{for each }\vec m\in\SS\,.
 \]
We also define terms~$\sa_i$, $\tb_i$, $\se_{\vec{m}}$, $\se^*_{\vec{m}}$ in the variables~$\sa_{i,j}$ and~$\sb_i$ (for $1\leq i\leq d$ and $1\leq j\leq m_i$) by
 \begin{align}
 \sa_i&=\bigvee_{j=1}^{m_i}\sa_{i,j}\,,&
 \tb_i&=\Bigl(\bigvee_{i'=1}^d\sb_{i'}\Bigr)\wedge(\sa_i\vee\sb_i)
 \qquad(\text{for }1\leq i\leq d)\,,\label{Eq:aitbi}\\ 
 \se_{\vec{m}}&=\bigwedge_{i=1}^d(\sa_i\vee\sb_i)\,,&
 \se^*_{\vec{m}}&=\Bigl(\bigvee_{i'=1}^d\sb_{i'}\Bigr)
 \wedge\se_{\vec{m}}=\bigwedge_{i=1}^d\tb_i\,.\notag
 \end{align}
 Further, we define lattice terms~$\sf^{\sigma,\tau}_i$, for $2\leq
 i\leq d$ and $(\sigma,\tau)\in\fS_d\times\fF(\vec{m})$, by downward
 induction on~$i$ (for $2\leq i<d$), by
 \begin{align}
 \sf^{\sigma,\tau}_d&=(\sa_{\sigma(d),\tau\sigma(d)}\vee
 \tb_{\sigma(1)})\wedge(\sa_{\sigma(d)}\vee\sb_{\sigma(d)})\,,\label{Eq:fstd}\\
 \sf^{\sigma,\tau}_i&=(\sa_{\sigma(i),\tau\sigma(i)}\vee\tb_{\sigma(1)})\wedge
 (\sa_{\sigma(i)}\vee\sb_{\sigma(i)})\wedge\bigwedge_{i<j\leq d}
 \bigl(\sa_{\sigma(i),\tau\sigma(i)}\vee\sf^{\sigma,\tau}_j\bigr)\,.\notag
 \end{align}
Let \Gzp{\vec{m}} (the \emph{Gazpacho identity with index~$\vec{m}$}) be the following lattice-theoretical identity, in the variables~$\sa_{i,j}$ and~$\sb_i$, for $1\leq i\leq d$ and $1\leq j\leq m_i$:
 \begin{equation}
 \se_{\vec{m}}\leq\se^*_{\vec{m}}\vee\bigvee\Famm{\sf^{\sigma,\tau}_2}
 {(\sigma,\tau)\in\fS_d\times\fF(\vec{m})}\,.\tag{\Gzp{\vec{m}}}
 \end{equation}

\begin{theorem}\label{T:GzpinAssoc}
Every Tamari lattice satisfies \Gzp{\vec{m}} for each $\vec{m}\in\SS$.
\end{theorem}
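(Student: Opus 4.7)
The plan is to reduce, using the finiteness of $\sA(n)$, the verification of $\Gzp{\vec m}$ at an arbitrary assignment of the variables to $\sA(n)$ to the inequality $\bp\leq\se^*_{\vec m}\vee\bigvee_{\sigma,\tau}\sf^{\sigma,\tau}_2$ for every join-irreducible $\bp$ below $\se_{\vec m}$. By Lemma~\ref{L:JiisPUn}, such a $\bp$ has the form $\seq{a,b}$ for some $(a,b)\in\cI_n$. The case where $\bp\leq\sb_i$ for some $i$ is immediate, since then $\bp\leq\bigl(\bigvee_{i'}\sb_{i'}\bigr)\wedge\se_{\vec m}=\se^*_{\vec m}$; so under the standing assumption $\bp\not\leq\sb_i$ for every $i$, I would proceed by induction on $b-a$.

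In the base case $b=a+1$, $\bp$ is an atom of~$\sA(n)$. By Proposition~\ref{P:MinJoinCov} (with $k=1$), atoms admit only the trivial minimal join-cover and are therefore join-prime. Hence $\bp\leq\sa_i\vee\sb_i$ and $\bp\not\leq\sb_i$ force $\bp\leq\sa_i$, and a second application of join-primeness to $\sa_i=\bigvee_j\sa_{i,j}$ yields $\bp\leq\sa_{i,\tau(i)}$ for some $\tau(i)\in[m_i]$; this produces a $\tau\in\fF(\vec m)$. For any $\sigma\in\fS_d$ one has $\bp\leq\sa_{\sigma(i),\tau\sigma(i)}$ for every $i$, and since each meetand in the recursive definition of $\sf^{\sigma,\tau}_i$ dominates $\sa_{\sigma(i),\tau\sigma(i)}$ (either directly or via $\sa_{\sigma(i)}\geq\sa_{\sigma(i),\tau\sigma(i)}$), a downward induction from $i=d$ to $i=2$ shows that $\bp\leq\sf^{\sigma,\tau}_2$.

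The inductive step, $b-a\geq 2$, requires a subtle choice of $(\sigma,\tau)$. The key auxiliary fact, extracted from Corollary~\ref{C:Int(Uclosed)} and the definition of~$\Cl$ as the transitive closure, is the chain-characterization: $\seq{a,b}\leq\bx\vee\by$ holds in $\sA(n)$ if{f} there is a chain $a=s_0<\cdots<s_m=b$ with each $(s_l,s_{l+1})\in\bx\cup\by$. I would use this to organize the indices $\sigma(1),\dots,\sigma(d)$ into a ``schedule'' in which $\sigma(1)$ is chosen so that $\tb_{\sigma(1)}$ absorbs the right tail of $\bp$ (the portion below $\bigvee_{i'}\sb_{i'}$), while the elements $\sa_{\sigma(i),\tau\sigma(i)}$ for $i\geq 2$ cover progressively longer initial $a$-segments of $\bp$. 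Unwinding the recursion for $\sf^{\sigma,\tau}_i$, the desired inequality $\bp\leq\sf^{\sigma,\tau}_2$ reduces, for each $j\in\set{2,\dots,d}$, to
\[
\bp\leq\Bigl(\bigvee_{2\leq l<j}\sa_{\sigma(l),\tau\sigma(l)}\Bigr)\vee\sa_{\sigma(j)}\vee\sb_{\sigma(j)},
\]
together with $\bp\leq\bigl(\bigvee_{2\leq l\leq d}\sa_{\sigma(l),\tau\sigma(l)}\bigr)\vee\tb_{\sigma(1)}$; each such inequality is then certified by exhibiting an appropriate chain in $[a,b]$ whose steps realize the required summands.

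The main obstacle is producing the combinatorial rule for $(\sigma,\tau)$ from the evaluation. The naive idea of taking a common refinement of the chains witnessing $\bp\leq\sa_i\vee\sb_i$ for different~$i$ fails in $\sA(n)$, because the inequality $\seq{c',d'}\leq\seq{c,d}$ requires $c'=c$: inserting a fresh breakpoint into a chain destroys the classification of the resulting sub-pieces as $\leq\sa_i$ or $\leq\sb_i$. The resolution rests on the left-anchored form $\seq{a,b}=\setm{(a,m)}{a<m\leq b}$ specific to join-irreducibles in the associahedron, a feature that is precisely what distinguishes $\sA(n)$ from a general Cambrian lattice~$\sP_U(n)$, and whose absence is what permits $\Veg{1}$ to fail in $\sP_{\set{3}}(4)$. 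Concretely, I expect $\sigma$ to process the indices in the order of the maximal ``rightward reach'' of each~$\sa_i$ from~$a$ inside~$\bp$, with $\tau(i)$ picking an $\sa_{i,j}$ that realizes this reach; making this precise and verifying that it satisfies all the meetand inequalities above is the heart of the proof.
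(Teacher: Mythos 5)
Your overall strategy is the same as the paper's: reduce to a \jirr\ $\bp=\seq{a,b}$ below $\se_{\vec m}$, argue by minimality of the interval $[a,b]$, decompose $[a,b]$ into chains witnessing $\bp\leq\ba_i\vee\bb_i$ for each $i$, and exploit the left-closure property of associahedron elements ($(i,k)\in\bx$ and $i<j<k$ imply $(i,j)\in\bx$). But two essential points are missing or incorrect. First, your ``unwinding'' of the recursion for $\sf^{\sigma,\tau}_2$ into the flat inequalities $\bp\leq\bigl(\bigvee_{2\leq l<j}\sa_{\sigma(l),\tau\sigma(l)}\bigr)\vee\sa_{\sigma(j)}\vee\sb_{\sigma(j)}$ goes the wrong way: since $\sf^{\sigma,\tau}_j$ is itself a meet, the condition $\bp\leq\sa_{\sigma(i),\tau\sigma(i)}\vee\sf^{\sigma,\tau}_j$ is \emph{stronger} than having $\bp$ below the join of $\sa_{\sigma(i),\tau\sigma(i)}$ with each meetand of $\sf^{\sigma,\tau}_j$ separately (a join of a meet lies below, not above, the meet of the joins), so your list of flat inequalities does not imply $\bp\leq\sf^{\sigma,\tau}_2$. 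The paper handles this by a downward induction that produces, for each $j$, a single point $s_{\sigma(j)}$ with $(s_{\sigma(j)},y)\in\bf^{\sigma,\tau}_j$ and $(s_{\sigma(i)},s_{\sigma(j)})\in\Delta_\ell\cup\ba_{\sigma(i),\tau\sigma(i)}$, which does yield $(s_{\sigma(i)},y)\in\ba_{\sigma(i),\tau\sigma(i)}\vee\bf^{\sigma,\tau}_j$.

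Second, the combinatorial rule for $(\sigma,\tau)$ --- which you yourself call the heart of the proof --- is only guessed at, and the guess does not match what works. The correct construction is not organized around the ``rightward reach of each $\sa_i$ from $a$'' nor around initial segments: for each $i$ one locates the least breakpoint $z_i$ of the $i$th chain such that the tail $(z_i,y)$ lies in $\tb_i$ (it exists because the last step of each chain must be a $\bb_i$-step), takes $\sigma(1)$ to minimize $z_i$, and for $i\neq\sigma(1)$ lets $s_i$ be the breakpoint of the $i$th chain immediately below $z_{\sigma(1)}$. The step of the $i$th chain crossing $z_{\sigma(1)}$ cannot be a $\bb_i$-step (that would contradict the leastness of $z_i$), so it is an $\ba_{i,\tau(i)}$-step, which fixes $\tau$, and left-closure gives $(s_i,z_{\sigma(1)})\in\ba_{i,\tau(i)}$; the permutation $\sigma$ then orders the $s_i$ increasingly so that the pairs $(s_{\sigma(i)},s_{\sigma(j)})$ are again covered by $\ba_{\sigma(i),\tau\sigma(i)}$. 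Everything is anchored at the single point $z_{\sigma(1)}$ and the common tail $[z_{\sigma(1)},y]$. Without this construction and the verification you explicitly defer, the proposal remains a plan rather than a proof.
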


\begin{proof}
Let $\ell$ be a positive integer.
Set $\vec{m}=(m_1,\dots,m_d)$ with $d\geq2$ and let~$\ba_{i,j}$ and~$\bb_i$ (for $1\leq i\leq d$ and $1\leq j\leq m_i$) be elements of $\sA(\ell)$. We define $\bb=\bigvee_{i=1}^d\bb_i$, and, applying the lattice polynomials defined above, elements $\ba_i$, $\tilde{\bb}_i$, $\be=\se_{\vec{m}}(\vec{\ba},\vec{\bb})$, $\bf^{\sigma,\tau}_i=\sf^{\sigma,\tau}_i(\vec{\ba},\vec{\bb})$, and
 \[
 \bf=(\bb\wedge\be)\vee\bigvee\Famm{\bf^{\sigma,\tau}_2}
 {(\sigma,\tau)\in\fS_d\times\fF(\vec{m})}\,.
 \]
We must prove that~$\be$ is contained in~$\bf$. Suppose otherwise and let $(x,y)\in\be\setminus\bf$ with the interval $[x,y]$ minimal with that property. For each $i\in[d]$, there exists a subdivision
 \begin{equation}\label{Eq:Subdivzij}
 x=z^i_0<z^i_1<\cdots<z^i_{n_i}=y\text{ with }(z^i_j,z^i_{j+1})\in
 \bigcup_{k=1}^{m_i}\ba_{i,k}\cup\bb_i
 \text{ for each }j<n_i\,.
 \end{equation}
We set $Z_i=\setm{z^i_j}{0\leq j\leq n_i}$, for each $i\in[d]$.
It follows from the minimality assumption on $[x,y]$ that $(x,z^i_j)\in\bf$ for each $i\in[d]$ and each $j<n_i$. Since $(x,y)\notin\bf$, it follows that $(z^i_j,y)\notin\bf$; in particular, $(z^i_{n_i-1},y)\notin\bf$. However, from $\bf^{\sigma,\tau}_2\geq\ba_{\sigma(2),\tau\sigma(2)}$ for each $(\sigma,\tau)\in\fS_d\times\fF(\vec{m})$ it follows that $\ba_i\leq\bf$, thus, \emph{a fortiori}, $\bigcup_{k=1}^{m_i}\ba_{i,k}\subseteq\bf$, and thus, by~\eqref{Eq:Subdivzij}, $(z^i_{n_i-1},y)\in\bb_i$.

Let $i\in[d]$. Since $\bb_i\subseteq\tilde{\bb}_i$, there exists a least $z_i\in Z_i\setminus\set{y}$ such that $(z_i,y)\in\tilde{\bb}_i$. If $z_i=x$, then $(x,y)$ belongs to $\tilde{\bb}_i\wedge\be=\bb\wedge\be$, thus to~$\bf$, \contr; hence $x<z_i$. Pick $i_1\in[d]$ such that $z_{i_1}\leq z_i$ for each $i\in[d]$. Denote by~$s_i$ the largest element of~$Z_i\ddnw z_{i_1}$ and by~$s'_i$ the successor of~$s_i$ in~$Z_i$, for each $i\in[d]\setminus\set{i_1}$. There exists a permutation $\sigma\in\fS_d$ such that $\sigma(1)=i_1$ and $s_{\sigma(2)}\leq s_{\sigma(3)}\leq\cdots\leq s_{\sigma(d)}$.

Suppose that $(s_i,s'_i)\in\bb_i$, for some $i\in[d]\setminus\set{\sigma(1)}$. {}From $s_i<z_{\sigma(1)}\leq s'_i$ it follows that $(s_i,z_{\sigma(1)})\in\bb_i$, thus, as $(z_{\sigma(1)},y)\in\tilde{\bb}_{\sigma(1)}$, we obtain that $(s_i,y)\in\bb_i\vee\tilde\bb_{\sigma(1)}$, thus $(s_i,y)\in\bb$. {}From $\set{s_i,y}\subseteq Z_i$ it follows that $(s_i,y)\in\ba_i\vee\bb_i$, and so $(s_i,y)\in\bb\wedge(\ba_i\vee\bb_i)=\tilde{\bb}_i$, \contr\ as $s_i<z_i$. Therefore, $(s_i,s'_i)\notin\bb_i$, and therefore, by~\eqref{Eq:Subdivzij}, there exists $\tau(i)\in[m_i]$ such that $(s_i,s'_i)\in\ba_{i,\tau(i)}$. Since $s_i<z_{\sigma(1)}\leq s'_i$, we also get $(s_i,z_{\sigma(1)})\in\ba_{i,\tau(i)}$.

{}From $(s_{\sigma(i)},z_{\sigma(1)})\in\ba_{\sigma(i),\tau\sigma(i)}$ and $(z_{\sigma(1)},y)\in\tilde{\bb}_{\sigma(1)}$ it follows that $(s_{\sigma(i)},y)\in\ba_{\sigma(i),\tau\sigma(i)}\vee\tilde{\bb}_{\sigma(1)}$. Moreover, from $\set{s_{\sigma(i)},y}\subseteq Z_{\sigma(i)}$ it follows that $(s_{\sigma(i)},y)\in\ba_{\sigma(i)}\vee\bb_{\sigma(i)}$, and therefore
 \begin{equation}\label{Eq:Intermssigiyin}
 (s_{\sigma(i)},y)\in(\ba_{\sigma(i),\tau\sigma(i)}\vee\tilde{\bb}_{\sigma(1)})
 \wedge(\ba_{\sigma(i)}\vee\bb_{\sigma(i)})\,.
 \end{equation}
Now we prove, by downward induction on~$i$, that $(s_{\sigma(i)},y)\in\bf^{\sigma,\tau}_i$, for each $i\in[2,d]$. The case $i=d$ follows readily from~\eqref{Eq:Intermssigiyin}. Now suppose that $2\leq i<d$ and that $(s_{\sigma(j)},y)\in\bf^{\sigma,\tau}_j$ for each~$j$ with $i<j\leq d$. Fix such a~$j$. {}From $(s_{\sigma(i)},z_{\sigma(1)})\in\ba_{\sigma(i),\tau\sigma(i)}$ and $s_{\sigma(i)}\leq s_{\sigma(j)}<z_{\sigma(1)}$ it follows that $(s_{\sigma(i)},s_{\sigma(j)})\in\Delta_{\ell}\cup\ba_{\sigma(i),\tau\sigma(i)}$. By induction hypothesis, it follows that $(s_{\sigma(i)},y)\in\ba_{\sigma(i),\tau\sigma(i)}\vee\bf^{\sigma,\tau}_j$. Therefore, meeting the right hand side of this relation over all~$j$ and then with the right hand side of~\eqref{Eq:Intermssigiyin}, we obtain that $(s_{\sigma(i)},y)\in\bf^{\sigma,\tau}_i$, as desired.

In particular, $(s_{\sigma(2)},y)\in\bf^{\sigma,\tau}_2\subseteq\bf$. By the minimality assumption on the interval $[x,y]$, the pair $(x,s_{\sigma(2)})$ belongs to~$\bf$, and so $(x,y)\in\bf$, \contr.
\end{proof}

Due to the complexity of the identities \Gzp{\vec{m}} for general~$\vec{m}$, we shall study some of their much simpler consequences instead.

\section{A first nontrivial identity for all Tamari lattices}\label{S:Veg1}

In this section we shall prove that the simplest Gazpacho identity
does not hold in the Cambrian lattice~$\sA_{\set{3}}(4)$, thus
providing our first counterexample to Geyer's conjecture.

Consider the identity \Gzp{\vec{m}} with $\vec{m}=(1,1)$. It has the
four variables $\sa_1$, $\sa_2$, $\sb_1$, $\sb_2$, it involves the
terms $\tb_i=(\sb_1\vee\sb_2)\wedge(\sa_i\vee\sb_i)$, for
$i\in\set{1,2}$, and $\se=(\sa_1\vee\sb_1)\wedge(\sa_2\vee\sb_2)$. Since
$\fF(\vec{m})$ is a singleton, the superscript~$\tau$ becomes
irrelevant in the term~$\sf^{\sigma,\tau}_d$ given in~\eqref{Eq:fstd}
(for $d=2$), so we omit it, and then
 \[
 \sf^{\sigma}_2=(\sa_{\sigma(2)}\vee\tb_{\sigma(1)})\wedge
 (\sa_{\sigma(2)}\vee\sb_{\sigma(2)})\,,\quad\text{for each }\sigma\in\fS_2\,.
 \]
Consequently, \Gzp{1,1} is equivalent to the following identity:
 \[
 (\sa_1\vee\sb_1)\wedge(\sa_2\vee\sb_2)\leq
 (\tb_1\wedge\tb_2)\vee\bigl((\sa_1\vee\tb_2)\wedge(\sa_1\vee\sb_1)\bigr)
 \vee\bigl((\sa_2\vee\tb_1)\wedge(\sa_2\vee\sb_2)\bigr)\,.
 \]
Now observing that $\sa_i\vee\sb_i=\sa_i\vee\tb_i$ in every lattice, we can cancel out the term $\tb_1\wedge\tb_2$ and thus we obtain the following equivalent form of \Gzp{1,1}, which we shall denote by~\Veg{1}:
 \begin{equation}
 (\sa_1\vee\sb_1)\wedge(\sa_2\vee\sb_2)\leq
 \bigl((\sa_1\vee\sb_1)\wedge(\sa_1\vee\tb_2)\bigr)
 \vee\bigl((\sa_2\vee\tb_1)\wedge(\sa_2\vee\sb_2)\bigr)\,.\tag*{\Veg{1}}
 \end{equation}
Hence, as a consequence of Theorem~\ref{T:GzpinAssoc}, we obtain the following result.

\begin{corollary}\label{C:Asso2VEG1}
Every Tamari lattice satisfies \Veg{1}.
\end{corollary}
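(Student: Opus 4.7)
The plan is to derive this corollary directly from Theorem~\ref{T:GzpinAssoc}, specialized to $\vec{m}=(1,1)$, by performing the syntactic simplifications already laid out in the text preceding the statement. There is essentially no new mathematical content beyond observing the equivalence between \Gzp{1,1} and \Veg{1}; the substantive work has been done in Theorem~\ref{T:GzpinAssoc}.

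Concretely, I would proceed as follows. First, instantiate $\vec{m}:=(1,1)$ in the definition of \Gzp{\vec{m}}. Since $d=2$ and each $m_i=1$, the index set $\fF(\vec{m})=[1]\times[1]$ is a singleton, so the parameter $\tau$ carries no information and the variables $\sa_{i,j}$ reduce to a single variable $\sa_{i,1}$, which one may simply rename $\sa_i$ (this is consistent with the definition $\sa_i:=\bigvee_{j=1}^{m_i}\sa_{i,j}$ in~\eqref{Eq:aitbi}). The sum ranges only over $\sigma\in\fS_2$, giving two terms $\sf^{\sigma}_2$ for $\sigma=\id$ and $\sigma=(1\ 2)$. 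Unwinding~\eqref{Eq:fstd} with these substitutions yields exactly the form
\[
\se_{\vec{m}}\leq\se^*_{\vec{m}}\vee\bigl((\sa_1\vee\tb_2)\wedge(\sa_1\vee\sb_1)\bigr)\vee\bigl((\sa_2\vee\tb_1)\wedge(\sa_2\vee\sb_2)\bigr)\,,
\]
displayed in the paragraph preceding \Veg{1}.

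Next, I would eliminate the joinand $\se^*_{\vec{m}}=\tb_1\wedge\tb_2$ and replace the~$\sb_i$ by~$\tb_i$ inside the remaining joinands, using the identity
\[
\sa_i\vee\sb_i=\sa_i\vee\tb_i\quad\text{valid in every lattice,}
\]
which holds because $\sb_i\leq\tb_i\leq\sa_i\vee\sb_i$ (immediate from the definition $\tb_i:=(\sb_1\vee\sb_2)\wedge(\sa_i\vee\sb_i)$ in~\eqref{Eq:aitbi}). The absorption argument is then routine: the term $\tb_1\wedge\tb_2$ is dominated by either of the other two joinands once one notes $\tb_i\leq\sa_j\vee\tb_i$ for $\{i,j\}=\{1,2\}$ and $\tb_j\leq\sa_j\vee\sb_j$, so adding~$\se^*_{\vec{m}}$ to the right-hand side is redundant. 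After these cosmetic manipulations the instantiated Gazpacho identity becomes precisely~\Veg{1}.

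The only potential obstacle is bookkeeping: one must verify carefully that the two simplifications (dropping~$\tau$ and cancelling~$\tb_1\wedge\tb_2$) are indeed logically equivalence-preserving rather than mere one-sided implications. But since $\sa_i\vee\sb_i=\sa_i\vee\tb_i$ is a true equation in every lattice and $\tb_1\wedge\tb_2$ is absorbed, the equivalence \Gzp{1,1}~$\Leftrightarrow$~\Veg{1} holds universally, so the corollary follows at once from Theorem~\ref{T:GzpinAssoc}.
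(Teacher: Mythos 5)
Your proposal is correct and follows exactly the paper's route: the paper's ``proof'' of Corollary~\ref{C:Asso2VEG1} is precisely the derivation in the text preceding the statement (instantiating \Gzp{\vec{m}} at $\vec{m}=(1,1)$, discarding the vacuous parameter~$\tau$, and absorbing the joinand $\tb_1\wedge\tb_2$ into $(\sa_1\vee\tb_2)\wedge(\sa_1\vee\sb_1)$), followed by an appeal to Theorem~\ref{T:GzpinAssoc}. Your absorption check via $\tb_1\wedge\tb_2\leq\tb_2\leq\sa_1\vee\tb_2$ and $\tb_1\wedge\tb_2\leq\tb_1\leq\sa_1\vee\sb_1$ is exactly the needed justification, so nothing is missing.
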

\begin{theorem}\label{T:Perm4NotEmb}
The permutohedron~$\sP(4)$ does not satisfy the identity \Veg{1}. In particular, it has no lattice embedding into any Tamari lattice.
\end{theorem}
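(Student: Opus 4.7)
The plan is to exhibit an explicit assignment of elements to the four variables $\sa_1, \sb_1, \sa_2, \sb_2$ of~\Veg{1} witnessing the failure of this identity inside the Cambrian lattice $\sP_{\set{3}}(4)$. Since $\sP_{\set{3}}(4)$ is a lattice-theoretical retract of $\sP(4)$ by Proposition~\ref{P:PUnjoinfitsPn}, every identity satisfied by $\sP(4)$ is inherited by the retract, so a failure inside $\sP_{\set{3}}(4)$ transfers immediately to $\sP(4)$. The second conclusion is then automatic: sublattices inherit identities and every associahedron satisfies~\Veg{1} by Corollary~\ref{C:Asso2VEG1}, so an embedding of $\sP(4)$ into any associahedron would force $\sP(4)$ to satisfy~\Veg{1}.

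The witness I propose is, inside $\sP_{\set{3}}(4)$,
\[
\ba_1:=\set{(2,3),(2,4)}\,,\quad \bb_1:=\set{(1,2)}\,,\quad
\ba_2:=\set{(1,3),(2,3)}\,,\quad \bb_2:=\set{(3,4)}\,,
\]
each of which one checks directly against Definition~\ref{D:Alterno} to be an element of $\sP_{\set{3}}(4)$. By Corollary~\ref{C:Int(Uclosed)}, joins in $\sP_{\set{3}}(4)$ are transitive closures of set-theoretical unions while meets are ordinary set intersections. A direct computation then gives $\ba_1\vee\bb_1=\set{(1,2),(1,3),(1,4),(2,3),(2,4)}$ and $\ba_2\vee\bb_2=\set{(1,3),(1,4),(2,3),(2,4),(3,4)}$, whose intersection, namely the left-hand side of~\Veg{1}, equals $\set{(1,3),(1,4),(2,3),(2,4)}$ and hence contains the pair~$(1,4)$. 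Since $\bb_1\vee\bb_2=\set{(1,2),(3,4)}$ is already transitively closed, $\tb_i=\bb_i$ for $i\in\set{1,2}$; consequently $\ba_1\vee\tb_2=\set{(2,3),(2,4),(3,4)}$ and $\ba_2\vee\tb_1=\set{(1,2),(1,3),(2,3)}$. Intersecting these with $\ba_1\vee\bb_1$ and $\ba_2\vee\bb_2$ respectively, and joining the two results, produces $\set{(1,3),(2,3),(2,4)}$, which does not contain $(1,4)$. Hence~\Veg{1} is violated, proving the theorem.

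The only real difficulty is locating a suitable witness; once an assignment is proposed, the verification reduces to transitive-closure computations controlled by the $\sD_U$-conditions of Definition~\ref{D:Alterno}. The conceptual guide behind the choice is that the pair~$(1,4)$ is produced on the left-hand side through two independent transitivity routes $(1,2)(2,4)$ and $(1,3)(3,4)$, each of which requires \emph{both} of the $\bb_i$'s as bridges; on the right-hand side, however, the two summands isolate $\bb_1$ from $\bb_2$, so neither closure can assemble both bridges at once, and the $\sD_U$-constraint that forces $(1,4)$ to carry $(1,3)$ alongside it keeps the pair~$(1,4)$ out of the combined join.
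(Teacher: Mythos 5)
Your proposal is correct and follows essentially the same route as the paper: it exhibits the same four join-irreducible witnesses in the Cambrian lattice $\sP_{\set{3}}(4)$ (with the roles of the indices $1$ and $2$ swapped, which is immaterial by the symmetry of \Veg{1}) and transfers the failure to $\sP(4)$ via the sublattice/retract relationship rather than the subdirect decomposition, an inessential difference. The computations of the joins, meets, and of $\tb_1,\tb_2$ all check out.
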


\begin{proof}
  By using Proposition~\ref{P:DecompPU(n)}, it suffices to prove
  that~$\sA_U(4)$ does not satisfy \Veg{1} for a suitable
  $U\subseteq[4]$. Take $U=\set{3}$ and define elements of
  $\sA_U(4)$ by $\ba_1=\seq{1,3}_U$, $\ba_2=\seq{2,4}_U$,
  $\bb_1=\seq{3,4}_U$,
  and $\bb_2=\seq{1,2}_U$. Hence $\ba_1=\set{(1,3),(2,3)}$,
  $\ba_2=\set{(2,3),(2,4)}$, $\bb_1=\set{(3,4)}$, and
  $\bb_2=\set{(1,2)}$. Furthermore, it is straightforward to verify
  that
 \begin{align*}
 \ba_1\vee\bb_1&=\set{(1,3),(1,4),(2,3),(2,4),(3,4)}\,,\\
 \ba_2\vee\bb_2&=\set{(1,2),(1,3),(1,4),(2,3),(2,4)}\,,\\
 \ba_1\vee\bb_2&=\set{(1,2),(1,3),(2,3)}\,,\\
 \ba_2\vee\bb_1&=\set{(2,3),(2,4),(3,4)}\,,\\
 \ba_1\vee\ba_2&=\set{(1,3),(2,3),(2,4)}\,,
 \end{align*}
 thus
 \begin{align*}
   \tilde{\bb}_j&=\bb_j\,,&&\text{for all }j\in\set{1,2}\,,\\
   (\ba_i\vee\tilde{\bb}_1)\wedge(\ba_i\vee\tilde{\bb}_2)&=\ba_i\,,
   &&\text{for all }i\in\set{1,2}\,,\\
   (\ba_1\vee
   \bb_1)\wedge(\ba_2\vee\bb_2)&=\set{(1,3),(1,4),(2,3),(2,4)}\,.
 \end{align*}
 In particular, for that particular instance, \Veg{1} is not
 satisfied.
\end{proof}

\begin{remark}\label{Rk:NonEmbAlt}
  The proof of Theorem~\ref{T:Perm4NotEmb} shows that the Cambrian
  lattice~$\sA_{\set{3}}(4)$ does not satisfy the identity~\Veg{1}.
  Hence, by Corollary~\ref{C:PU(n)dual}, the Cambrian
  lattice~$\sA_{\set{3}}(4)=\sA_{[4]\setminus\set{2}}(4)$ does not
  satisfy the dual of the identity~\Veg{1}. In particular,
  $\sA_{\set{2}}(4)$ cannot be embedded into any Tamari lattice,
  either. The lattice~$\sA_{\set{3}}(4)$ is represented on the right
  hand side of Figure~\ref{Fig:Cambrian}.
\end{remark}

\begin{remark}\label{Rk:P(4)optimal}
Observe that for positive integers~$m$ and~$n$, there is a lattice embedding from the product $\sA(m)\times\sA(n)$ into~$\sA(m+n)$, obtained by sending $(\bx,\by)$ to $\bx\cup\by'$ where $\by'=\setm{(m+i,m+j)}{(i,j)\in\by}$. (A similar comment applies to embedding $\sP(m)\times\sP(n)$ into~$\sP(m+n)$.) Since the permutohedron~$\sP(3)$ is a subdirect product of two copies of the five-element modular nondistributive lattice~$\sN_5$ (see Figure~\ref{Fig:P3N5}) and~$\sN_5\cong\sA(3)$, it follows that~$\sP(3)$ embeds into $\sA(3)\times\sA(3)$, thus into~$\sA(6)$.

\begin{figure}[htb]
\includegraphics{P3N5}
\caption{The lattices $\sP(3)$ and~$\sN_5$}
\label{Fig:P3N5}
\end{figure}
\end{remark}

\section{Another identity for all Tamari lattices}\label{S:Veg2}

In this section we shall prove that a weakening of a certain Gazpacho identity fails in the lattice~$\sB(2,2)$ (Corollary~\ref{C:B22Config}), thus providing our second counterexample to Geyer's conjecture.

Consider the Gazpacho identity \Gzp{\vec{m}}, where $\vec{m}=(2,2)$, in which we substitute to both variables~$\sa_{1,j}$ and~$\sa_{2,j}$ the variable~$\sa_j$ (not to be confused with the lattice term~$\sa_i$ introduced in~\eqref{Eq:aitbi}), for $j\in\set{1,2}$. By arguing in a similar manner as at the beginning of Section~\ref{S:Veg1}, we see that the resulting identity is equivalent to the following identity, which we shall denote by \Veg{2}:
 \begin{equation}
 (\sa_1\vee\sa_2\vee\sb_1)\wedge(\sa_1\vee\sa_2\vee\sb_2)=
 \bigvee_{i,j\in\set{1,2}}
 \bigl((\sa_i\vee\tb_j)\wedge(\sa_1\vee\sa_2\vee\sb_{3-j})\bigr)\,,
 \tag*{\Veg{2}}
 \end{equation}
with the lattice terms $\tb_j=(\sb_1\vee\sb_2)\wedge(\sa_1\vee\sa_2\vee\sb_j)$, for $j\in\set{1,2}$. Hence, as a consequence of Theorem~\ref{T:GzpinAssoc}, we obtain the following.

\begin{theorem}\label{T:Asso2VEG2}
Every Tamari lattice satisfies \Veg{2}.
\end{theorem}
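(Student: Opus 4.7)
The plan is to derive \Veg{2} as a formal consequence of the Gazpacho identity \Gzp{2,2}, which by Theorem~\ref{T:GzpinAssoc} already holds in every associahedron. Specifically, I would apply to \Gzp{2,2} the substitution $\sa_{1,j}:=\sa_{2,j}:=\sa_j$ for $j\in\set{1,2}$, simplify, and verify that, up to absorption of one redundant term, the result is exactly \Veg{2}.

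Under this substitution, the Gazpacho term $\sa_i=\sa_{i,1}\vee\sa_{i,2}$ of~\eqref{Eq:aitbi} collapses to $\sa_1\vee\sa_2$ for both $i\in\set{1,2}$, while the Gazpacho term $\tb_i$ of~\eqref{Eq:aitbi} becomes the term $\tb_i$ appearing in \Veg{2}. Consequently, the left-hand side $\se_{\vec m}$ of \Gzp{2,2} becomes $(\sa_1\vee\sa_2\vee\sb_1)\wedge(\sa_1\vee\sa_2\vee\sb_2)$, matching the LHS of \Veg{2}. For the right-hand side, the base-case formula~\eqref{Eq:fstd} gives $\sf^{\sigma,\tau}_2=(\sa_{\sigma(2),\tau\sigma(2)}\vee\tb_{\sigma(1)})\wedge(\sa_{\sigma(2)}\vee\sb_{\sigma(2)})$, which under the substitution becomes $(\sa_{\tau\sigma(2)}\vee\tb_{\sigma(1)})\wedge(\sa_1\vee\sa_2\vee\sb_{\sigma(2)})$. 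Writing $i:=\sigma(1)$ and $j:=\tau\sigma(2)$, the pair $(i,j)$ ranges over $\set{1,2}^2$ as $(\sigma,\tau)$ varies over $\fS_2\times\fF(2,2)$, and the four distinct terms so produced match exactly the four summands indexed by $(i,j)$ on the RHS of \Veg{2}.

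The remaining summand $\se^*_{\vec m}=\tb_1\wedge\tb_2$ of the Gazpacho RHS is absorbed by the other terms: indeed $\tb_1\wedge\tb_2\leq(\sa_1\vee\tb_2)\wedge(\sa_1\vee\sa_2\vee\sb_1)$, since $\tb_1\wedge\tb_2\leq\tb_2\leq\sa_1\vee\tb_2$ and $\tb_1\wedge\tb_2\leq\tb_1\leq\sa_1\vee\sa_2\vee\sb_1$. This establishes the nontrivial inequality~$\leq$ in \Veg{2}. The reverse inequality~$\geq$ holds in any lattice, since $\sa_i\leq\sa_1\vee\sa_2$ and $\tb_j\leq\sa_1\vee\sa_2\vee\sb_j$ imply $(\sa_i\vee\tb_j)\wedge(\sa_1\vee\sa_2\vee\sb_{3-j})\leq(\sa_1\vee\sa_2\vee\sb_j)\wedge(\sa_1\vee\sa_2\vee\sb_{3-j})$. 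The argument is purely formal bookkeeping once Theorem~\ref{T:GzpinAssoc} is available; the only point requiring even momentary care is the matching of the eight Gazpacho summands $\sf^{\sigma,\tau}_2$ with the four summands of \Veg{2}, and I foresee no genuine obstacle.
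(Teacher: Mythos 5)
Your proof is correct and takes exactly the paper's route: Theorem~\ref{T:Asso2VEG2} is obtained there precisely by substituting $\sa_{1,j}=\sa_{2,j}=\sa_j$ into \Gzp{2,2} and simplifying ``as at the beginning of Section~\ref{S:Veg1}'', which is the bookkeeping you carry out in detail, including the absorption of $\se^*_{\vec m}=\tb_1\wedge\tb_2$ and the trivial reverse inequality. (Your labels $i:=\sigma(1)$, $j:=\tau\sigma(2)$ are transposed relative to the indexing used in \Veg{2}, but since the join ranges over all of $\set{1,2}^2$ the set of summands is unchanged, so this is harmless.)
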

 
For natural numbers~$m$ and~$n$, we denote by $\sB(m,n)$ the lattice
obtained by doubling the join of~$m$ atoms in the
$(m+n)$-atom Boolean lattice. It can be obtained by adding a new
element~$\bp$ to the Boolean lattice on $m+n$ atoms~$\ba_1$, \dots,
$\ba_m$, $\bb_1$, \dots, $\bb_n$, with the extra relations $\ba_i<\bp$
(for $1\leq i\leq m$) and $\bp<\ba_1\vee\cdots\vee \ba_m\vee \bb_j$
(for $1\leq j\leq n$). The lattices~$\sB(1,3)$ and~$\sB(2,2)$ are
represented in Figure~\ref{Fig:B13B22}, with their \jirr\ elements
marked by doubled circles.

\begin{figure}[htb]
\includegraphics{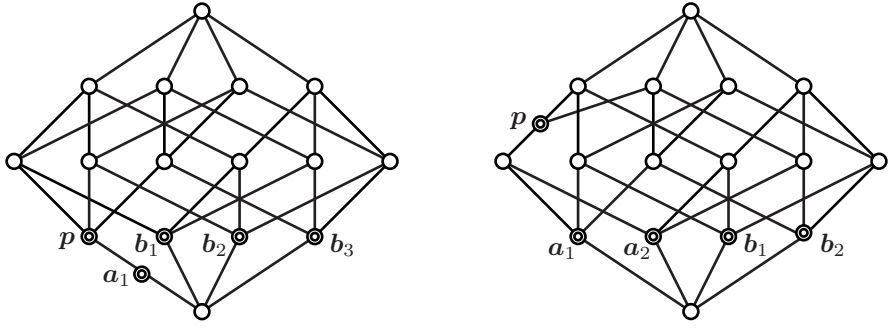}
\caption{The lattices $\sB(1,3)$ and $\sB(2,2)$}
\label{Fig:B13B22}
\end{figure}

The lattice~$\sB(m,n)$ is a so-called \emph{almost distributive lattice} (cf. Jipsen and Rose \cite[Lemma~4.11]{JiRo}), and it is subdirectly irreducible (cf. \cite[Theorem~4.17]{JiRo}). It is obtained by doubling a point from a finite Boolean lattice, thus it is bounded (cf. Freese, Je\v{z}ek, and Nation \cite[Theorem~2.44]{FJN}).

The class of lattices of the form~$\sB(m,n)$ is self-dual:

\begin{lemma}\label{L:DualBmn}
The lattices $\sB(m,n)$ and $\sB(n,m)$ are dually isomorphic, for all natural numbers~$m$ and~$n$.
\end{lemma}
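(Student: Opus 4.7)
The plan is to view $\sB(m,n)$ as obtained from the $(m+n)$-atom Boolean lattice $B$ by doubling the single element $\bc:=\ba_1\vee\cdots\vee\ba_m$, and then exploit the interaction of the doubling construction with the duality functor. Concretely, one checks first a general fact: for any finite lattice~$L$ and any $\bc\in L$, if $L[\bc]$ denotes the lattice obtained by doubling the one-element interval $\set{\bc}$ (with $\bc$ replaced by $\bc_0<\bc_1$), then $L[\bc]^\op\cong L^\op[\bc]$. The isomorphism simply swaps the labels of the two copies of~$\bc$, namely $\bc_0\leftrightarrow\bc_1^\op$, $\bc_1\leftrightarrow\bc_0^\op$, and fixes the rest pointwise; verifying that this preserves the covering relation above/below $\bc$ in both factors is routine from the explicit definition of doubling.

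Second, I would use the standard self-duality of the Boolean lattice~$B$ by complementation, that is, the dual isomorphism $\gamma\colon B\to B^\op$, $\bx\mapsto\bx^\cpl$. Under~$\gamma$, the element $\bc=\ba_1\vee\cdots\vee\ba_m$ corresponds to $\bc^\cpl=\bb_1\vee\cdots\vee\bb_n$, which is a join of~$n$ atoms in $B$. Substituting this into the identification of the previous paragraph yields
\[
\sB(m,n)^\op=B[\bc]^\op\cong B^\op[\bc]\cong B[\bc^\cpl]=\sB(n,m),
\]
as desired. The only step requiring any care is the ``doubling commutes with dualization'' lemma, but since we are doubling a single point (not a general interval), the verification amounts to checking the four covering relations at $\bc_0$ and $\bc_1$ and reduces to a direct unpacking of definitions; there is no real obstacle.
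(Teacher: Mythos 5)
Your argument is correct and is essentially the paper's proof in slightly more structural clothing: unfolding your composite $\sB(m,n)^\op\cong B^\op[\bc]\cong B[\bc^\cpl]=\sB(n,m)$ yields exactly the explicit dual isomorphism the paper writes down (complement every element of the Boolean lattice other than the doubled one, and swap the roles of the bottom and top of the doubled pair). Both the ``doubling a point commutes with dualization'' step and the use of Boolean complementation are sound, so there is nothing to fix.
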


\begin{proof}
Let~$x$ and~$y$ be disjoint sets of cardinality~$m$ and~$n$, respectively, and denote by~$\sB(x,y)$ the lattice obtained by doubling~$x$ in the powerset lattice~$\Pow(x\cup y)$ of~$x\cup y$. Hence $\sB(x,y)=\Pow(x\cup y)\cup\set{\bp}$ and $\sB(y,x)=\Pow(x\cup y)\cup\set{\bq}$, for new elements~$\bp$ and~$\bq$ such that
 \begin{gather*}
 x<\bp\text{ and }\bp<x\cup\set{j}\text{ for each }j\in y\,,\\
 y<\bq\text{ and }\bq<\set{i}\cup y\text{ for each }i\in x\,.
 \end{gather*}
Define a map $\varphi\colon\sB(x,y)\to\sB(y,x)$ by $\varphi(x)=\bq$, $\varphi(\bp)=y$, and $\varphi(z)=(x\cup y)\setminus z$ for each $z\in\Pow(x\cup y)\setminus\set{x}$. Then~$\varphi$ is a dual isomorphism. Now $\sB(m,n)\cong\sB(x,y)$ and $\sB(n,m)\cong\sB(y,x)$.
\end{proof}

The evaluations in~$\sB(2,2)$ of the lattice terms~$\tb_1$ and~$\tb_2$ at the quadruple $(\ba_1,\ba_2,\bb_1,\bb_2)$ are~$\bb_1$ and~$\bb_2$, respectively, so the left hand side of~\Veg{2} is evaluated by~$\bp$ while its right hand side is evaluated by~$\ba_1\vee \ba_2$. Since these two elements are distinct, we obtain the following result.

\begin{corollary}\label{C:B22Config}
The lattice~$\sB(2,2)$ does not satisfy the identity~\Veg{2}. In particular, it cannot be embedded into any Tamari lattice.
\end{corollary}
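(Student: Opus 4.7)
The plan is to exhibit a specific evaluation of the variables of \Veg{2} in $\sB(2,2)$ at which the identity fails, then invoke Theorem~\ref{T:Asso2VEG2} together with the trivial fact that the class of lattices satisfying a fixed identity is closed under taking sublattices. Concretely, I substitute $\sa_i\mapsto\ba_i$ and $\sb_j\mapsto\bb_j$ (for $i,j\in\set{1,2}$), where $\ba_1,\ba_2,\bb_1,\bb_2$ are the four atoms of the Boolean part of~$\sB(2,2)$, and $\bp$ is the newly adjoined element satisfying $\ba_1,\ba_2<\bp<\ba_1\vee\ba_2\vee\bb_j$ for $j\in\set{1,2}$.

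First I would evaluate the auxiliary terms. Since $\bb_1,\bb_2,\bb_1\vee\bb_2$ and the $\ba_1\vee\ba_2\vee\bb_j$ all lie in the Boolean sublattice of~$\sB(2,2)$, the computation of $\tb_j=(\sb_1\vee\sb_2)\wedge(\sa_1\vee\sa_2\vee\sb_j)$ is distributive and yields $\tb_j\mapsto\bb_j$ for $j\in\set{1,2}$. Then I would compute the right-hand side of \Veg{2}: each of the four terms $(\sa_i\vee\tb_j)\wedge(\sa_1\vee\sa_2\vee\sb_{3-j})$ reduces, again by the Boolean distributivity available, to $\ba_i$, so their join is $\ba_1\vee\ba_2$ (the Boolean join, which lies strictly below~$\bp$).

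The key step, and the only one where the doubled element~$\bp$ intervenes, is the computation of the left-hand side $(\ba_1\vee\ba_2\vee\bb_1)\wedge(\ba_1\vee\ba_2\vee\bb_2)$ in~$\sB(2,2)$. Here I must observe that $\bp$ is a common lower bound of the two factors (by the defining inequalities $\bp<\ba_1\vee\ba_2\vee\bb_j$), and that $\bp$ strictly dominates $\ba_1\vee\ba_2$; moreover $\bp$ is in fact the greatest such lower bound, since any element of the Boolean part that lies below both factors must, by Boolean distributivity, lie below $\ba_1\vee\ba_2<\bp$. Hence the left-hand side evaluates to $\bp$, while the right-hand side evaluates to $\ba_1\vee\ba_2$, and $\bp\neq\ba_1\vee\ba_2$.

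This refutes \Veg{2} in $\sB(2,2)$. Since every associahedron satisfies \Veg{2} by Theorem~\ref{T:Asso2VEG2}, and since lattice identities are inherited by sublattices, $\sB(2,2)$ admits no lattice embedding into any associahedron, completing the proof. I do not anticipate a genuine obstacle: the only conceptual point is to recognise that the meet in $\sB(2,2)$ differs from the meet in its Boolean quotient precisely because of the doubling, which is exactly what makes $\sB(2,2)$ a counterexample.
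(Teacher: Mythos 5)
Your proposal is correct and coincides with the paper's own argument: both evaluate \Veg{2} at the four atoms of~$\sB(2,2)$, find $\tb_j\mapsto\bb_j$, the left-hand side equal to~$\bp$ and the right-hand side equal to~$\ba_1\vee\ba_2$, and then conclude via Theorem~\ref{T:Asso2VEG2} and the closure of identities under sublattices. Your explicit justification that the doubled element~$\bp$ is the greatest common lower bound of the two coatoms is the one point the paper leaves implicit, and it is handled correctly.
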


\begin{remark}\label{Rk:BmnsatVeg2}
It is not hard (although a bit tedious) to verify that $\sB(m,n)$ satisfies \Veg{1} for all non simultaneously zero natural numbers~$m$ and~$n$. In particular, $\sB(2,2)$ satisfies~\Veg{1} but not~\Veg{2} (cf. Corollary~\ref{C:B22Config}). On the other hand, $\sP(4)$ does not satisfy~\Veg{1} (cf. Theorem~\ref{T:Perm4NotEmb}) and it can be verified that it satisfies~\Veg{2}. In particular, \emph{none of the identities~\Veg{1} and~\Veg{2} implies the other}.
\end{remark}

\section{Polarized measures and \mh s to Cambrian lattices}\label{S:PolMeas}

In the present section we shall introduce a convenient tool for
handling lattice embeddings into Cambrian lattices of type~A, inspired
by the theory of Galois connections (cf. Gierz \emph{et al.}~\cite{Comp} and the duality for finite lattices sketched in Santocanale~\cite{Sant09}). We shall apply this tool by proving that $\min\set{m,n}\leq1$ implies that~$\sB(m,n)$ embeds into some Tamari lattice (Theorem~\ref{T:EmbBn01}) and that $\min\set{m,n}\leq2$ implies that~$\sB(m,n)$ embeds into some Cambrian lattice of type~A, thus in some permutohedron (Proposition~\ref{P:EmbBm2}).

We set $\so{P}=\setm{(x,y)\in P\times P}{x<y}$, for any poset~$P$. Observe that $\so{[n]}=\cI_n$.

\begin{definition}\label{D:PolMeas}
Let $L$ be a \js, let~$P$ be a poset, and let $U\subseteq P$. An \emph{$L$-valued $U$-polarized measure on~$P$} is a map $\mu\colon\so{P}\to L$ such that
\begin{enumerate}
\item $\mu(x,z)\leq\mu(x,y)\vee\mu(y,z)$;

\item $y\in U$ implies that $\mu(x,y)\leq\mu(x,z)$;

\item $y\notin U$ implies that $\mu(y,z)\leq\mu(x,z)$,
\end{enumerate}
for all $x<y<z$ in~$P$. Furthermore, we say that~$\mu$ satisfies the
\emph{V-condition} if for all $(x,y)\in\so{P}$ and all $\ba,\bb\in L$,
  \begin{align}
    \label{Condition:V}
    \tag{V}
    \text{\em if
    $\mu(x,y)\leq\ba\vee\bb$, then} & \\\notag
    \text{\em there are $m \geq 1$ and } & \text{\em 
    a subdivision $x=z_0<z_1<\cdots<z_m=y$ in~$P$ such that} \\
  \notag
  & \text{\em either~$\mu(z_i,z_{i+1})\leq\ba$ or $\mu(z_i,z_{i+1})\leq\bb$ for each $i<m$\,.}
\end{align}
If \eqref{Condition:V} holds, then we shall say that \emph{the refinement problem $\mu(x,y)\leq\ba\vee\bb$ can be solved in~$P$}.
In case $U=P$, we shall say \emph{polarized measure} instead of
$U$-polarized measure. Furthermore, if~$L$ has a least element~$0$,
then we shall often extend the $U$-polarized measures by setting
$\mu(x,x)=0$ for each $x\in P$.
\end{definition}

In all the cases that we will consider in this paper, $P$ will be a finite chain, most of the time (but not always) of the form~$[n]$ for a positive integer~$n$. For the rest of this section we shall fix a positive integer~$n$.

\begin{example}\label{Ex:PolarMeas1}
Set $L=\sA(n)$. Then the assignment $\mu\colon(x,y)\mapsto\seq{x,y}$ defines an $L$-valued polarized measure on~$[n]$. Furthermore, $\mu$ satisfies the V-condition and its range \jz-generates the lattice~$L$.
\end{example}

\begin{example}\label{Ex:PolarMeas2}
Set $L=\sP(n)$. Then the assignment $\mu\colon(x,y)\mapsto\pUji{x}{y}{U}$ defines an $L$-valued $U$-polarized measure on~$[n]$. Furthermore, $\mu$ satisfies the V-condition. However, its range does not \jz-generate~$L$ for $n\geq3$.
\end{example}

\begin{definition}\label{D:UpolDualHom}
Let $U\subseteq[n]$ and let~$L$ be a finite lattice. We say that maps $\mu\colon\cI_n\to L$ and $\varphi\colon L\to\sA_U(n)$ are \emph{dual} if $(x,y)\in\varphi(\ba)$ if{f} $\mu(x,y)\leq\ba$, for all $(x,y)\in\cI_n$ and all $\ba\in L$.
\end{definition}

We leave to the reader the straightforward proof of the following result.

\begin{proposition}\label{P:UpolDualHom}
The following statements hold, for any $U\subseteq[n]$ and any finite lattice~$L$.
\begin{enumerate}
\item If $\mu\colon\cI_n\to L$ and $\varphi\colon L\to\sA_U(n)$ are dual, then~$\mu$ is a $U$-polarized measure and~$\varphi$ is a \muh. Furthermore,
 \begin{align}
 \mu(x,y)&=\text{least }\ba\in L\text{ such that }(x,y)\in\varphi(\ba)\,,
 &&\text{for each }(x,y)\in\cI_n\,;\label{Eq:mufromphi}\\
 \varphi(\ba)&=\setm{(x,y)\in\cI_n}{\mu(x,y)\leq\ba}\,,&&
 \text{for each }\ba\in L\,.\label{Eq:phifrommu}
 \end{align}
 
 \item Every \muh\ $\varphi\colon L\to\sA_U(n)$ has a unique dual $U$-polarized measure $\mu\colon\cI_n\to L$, which is defined by the formula~\eqref{Eq:mufromphi}.

\item Every $U$-polarized measure $\mu\colon\cI_n\to L$ has a unique dual \muh\ $\varphi\colon L\to\sA_U(n)$, which is defined by the formula~\eqref{Eq:phifrommu}.
\end{enumerate}
\end{proposition}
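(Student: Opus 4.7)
The proposition is a standard Galois-connection-style correspondence, so the plan is to verify each clause by unwinding the duality relation $(x,y)\in\varphi(\ba)\iff\mu(x,y)\leq\ba$ directly. The heart of the argument is the observation that in $\sP_U(n)$ the meet is intersection (by Corollary~\ref{C:Int(Uclosed)}), while the three defining conditions of an $L$-valued $U$-polarized measure on $[n]$ are precisely the pointwise translations, via duality, of the defining conditions for a subset of $\cI_n$ to lie in $\sP_U(n)$ (namely transitivity and membership in $\sD_U(n)$). So I would organize the proof around these two ``dictionary entries''.

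For part~(i), I would first derive the two formulas \eqref{Eq:mufromphi} and \eqref{Eq:phifrommu}: formula \eqref{Eq:phifrommu} is just the definition of duality, and \eqref{Eq:mufromphi} follows because $\mu(x,y)$ itself satisfies $(x,y)\in\varphi(\mu(x,y))$, and any $\ba$ with $(x,y)\in\varphi(\ba)$ satisfies $\mu(x,y)\leq\ba$ by duality. For the polarized-measure axioms, I would set $\ba:=\mu(x,y)\vee\mu(y,z)$ so that both $(x,y)$ and $(y,z)$ lie in $\varphi(\ba)\in\sP_U(n)$, hence transitivity gives $(x,z)\in\varphi(\ba)$, i.e., condition~(1); conditions~(2) and~(3) follow from the $\sD_U(n)$ condition on $\varphi(\mu(x,z))$. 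That $\varphi$ is a $(\wedge,1)$-homomorphism is immediate from duality: $(x,y)\in\varphi(\ba\wedge\bb)\iff\mu(x,y)\leq\ba$ and $\mu(x,y)\leq\bb\iff(x,y)\in\varphi(\ba)\cap\varphi(\bb)$, together with $\varphi(1_L)=\cI_n$.

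For part~(ii), given a $(\wedge,1)$-homomorphism $\varphi$, I would observe that the set $\Setm{\ba\in L}{(x,y)\in\varphi(\ba)}$ is a nonempty down-directed subset of the finite lattice~$L$ closed under binary meets (since $\varphi$ preserves meets and $1_L\mapsto\cI_n$), so it has a least element, which defines $\mu(x,y)$ via~\eqref{Eq:mufromphi}. Duality then reads: $\mu(x,y)\leq\ba$ forces $(x,y)\in\varphi(\mu(x,y))\subseteq\varphi(\ba)$ using that $\varphi$ is order-preserving, and conversely $(x,y)\in\varphi(\ba)$ forces $\mu(x,y)\leq\ba$ by minimality. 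Uniqueness of $\mu$ is forced by~\eqref{Eq:mufromphi}.

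For part~(iii), given a $U$-polarized measure $\mu$, I would define $\varphi$ by~\eqref{Eq:phifrommu} and check that each $\varphi(\ba)$ lies in $\sP_U(n)$: transitivity uses axiom~(1), while the $\sD_U(n)$ condition uses axioms~(2)–(3). Preservation of meets and the top element is exactly as in part~(i), read in reverse, and uniqueness of $\varphi$ is forced by~\eqref{Eq:phifrommu}. No step poses a real obstacle; the only mild care needed is to make sure that when passing from $\varphi$ to $\mu$ one really uses meet-preservation to guarantee existence of the minimum in~\eqref{Eq:mufromphi}, and when passing from $\mu$ to $\varphi$ one checks \emph{both} the transitivity and the $\sD_U(n)$ clauses separately.
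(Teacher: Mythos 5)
Your proof is correct and is exactly the straightforward verification the paper has in mind (the paper explicitly leaves the proof to the reader, so there is no written proof to diverge from). All the key points are in place: the two dictionary entries (meet in $\sP_U(n)$ is intersection, and the transitivity/$\sD_U(n)$ clauses correspond to the three measure axioms), the use of meet-preservation and $\varphi(1_L)=\cI_n$ to get the minimum in \eqref{Eq:mufromphi}, and the uniqueness statements forced by the two formulas.
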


\begin{proposition}\label{P:betwmuphi}
Let $U\subseteq[n]$, let~$L$ be a finite lattice, and let $\mu\colon\cI_n\to L$ and $\varphi\colon L\to\sA_U(n)$ be dual. The following statements hold:
\begin{enumerate}
\item $\varphi(0)=\es$ if{f} $0$ does not belong to the range of~$\mu$.

\item The range of~$\mu$ generates~$L$ as a \jz-subsemilattice if{f}~$\varphi$ is one-to-one.

\item $\mu$ satisfies the V-condition if{f} $\varphi$ is a lattice homomorphism.
\end{enumerate}
\end{proposition}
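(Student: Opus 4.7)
The plan is to exploit the duality formulas~\eqref{Eq:mufromphi} and~\eqref{Eq:phifrommu} from Proposition~\ref{P:UpolDualHom}, together with the description of the join in~$\sP_U(n)$ as a closure given in Corollary~\ref{C:Int(Uclosed)}. Each of the three equivalences then unfolds as a near-formal consequence of the definitions. Part~(i) is immediate: from~\eqref{Eq:phifrommu}, $\varphi(0)=\setm{(x,y)\in\cI_n}{\mu(x,y)\leq 0}=\setm{(x,y)\in\cI_n}{\mu(x,y)=0}$, which is empty exactly when~$0$ is not attained by~$\mu$.

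For~(ii), the key observation is that for every $\ba\in L$, the element
\[
\bc_{\ba}:=\bigvee\setm{\mu(x,y)}{(x,y)\in\varphi(\ba)}
\]
(with empty join equal to~$0$) satisfies $\bc_{\ba}\leq\ba$, $\varphi(\bc_{\ba})=\varphi(\ba)$, and lies in the $(\vee,0)$-subsemilattice~$K$ of~$L$ generated by the range of~$\mu$. If $K=L$, then every $\ba\in L$ is itself such a join, and a direct inspection gives $\ba=\bc_{\ba}$; hence $\varphi(\ba)=\varphi(\bb)$ forces $\bc_{\ba}=\bc_{\bb}$, whence $\ba=\bb$, and $\varphi$ is injective. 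Conversely, if some $\ba\in L\setminus K$ exists, then $\bc_{\ba}\in K$ forces $\bc_{\ba}<\ba$, while still $\varphi(\bc_{\ba})=\varphi(\ba)$, so $\varphi$ is not injective.

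For~(iii), since~$\varphi$ is a meet-homomorphism by Proposition~\ref{P:UpolDualHom}, it is a lattice homomorphism if{f} it preserves binary joins. The inclusion $\varphi(\ba)\vee\varphi(\bb)\subseteq\varphi(\ba\vee\bb)$ holds automatically by order-preservation and transitivity (that is, closure) of $\varphi(\ba\vee\bb)$. By Corollary~\ref{C:Int(Uclosed)}, $\varphi(\ba)\vee\varphi(\bb)=\Cl(\varphi(\ba)\cup\varphi(\bb))$, so the reverse inclusion says: whenever $\mu(x,y)\leq\ba\vee\bb$, there is a chain $x=z_0<z_1<\cdots<z_m=y$ with each $(z_i,z_{i+1})\in\varphi(\ba)\cup\varphi(\bb)$, i.e.\ $\mu(z_i,z_{i+1})\leq\ba$ or $\mu(z_i,z_{i+1})\leq\bb$. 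This is the V-condition~\eqref{Condition:V} verbatim, and the equivalence follows. The main obstacle, if any, is purely book-keeping in~(iii): one must verify that preservation of binary joins (on top of preservation of meets) really does suffice for $\varphi$ to be a lattice homomorphism, and that unravelling $\Cl(\varphi(\ba)\cup\varphi(\bb))$ as a transitive closure matches~\eqref{Condition:V} exactly.
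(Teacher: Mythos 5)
Your proposal is correct and follows essentially the same route as the paper: part~(iii) is the paper's argument verbatim, and your element $\bc_{\ba}$ in part~(ii) plays exactly the role of the paper's $\ba'$ (which the paper builds from the \jirr\ decomposition of $\varphi(\ba)$ rather than from all pairs in $\varphi(\ba)$, an immaterial difference). The organization of the two implications in~(ii) is slightly different (you argue the contrapositives), but the underlying idea --- that $\bc_{\ba}$ is the largest join of values of~$\mu$ below~$\ba$ and satisfies $\varphi(\bc_{\ba})=\varphi(\ba)$ --- is the same.
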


\begin{proof}
(i) is straightforward.

(ii). Suppose that~$\varphi$ is one-to-one and let $\ba\in L$. It follows from Lemma~\ref{L:JiisPUn} that there exists a decomposition $\varphi(\ba)=\bigvee_{i=1}^m\pUji{x_i}{y_i}{U}$ with a natural number~$m$ and elements $(x_i,y_i)\in\cI_n$ for $1\leq i\leq m$. Set $\ba'=\bigvee_{i=1}^m\mu(x_i,y_i)$. {}From $(x_i,y_i)\in\varphi(\ba)$ it follows that $\mu(x_i,y_i)\leq\ba$ for each~$i$; thus $\ba'\leq\ba$. Conversely, for each~$i\in[m]$, $\mu(x_i,y_i)\leq\ba'$, thus $(x_i,y_i)\in\varphi(\ba')$, and thus, by Lemma~\ref{L:JiisPUn}, $\pUji{x_i}{y_i}{U}\subseteq\varphi(\ba')$. Therefore, $\varphi(\ba)\leq\varphi(\ba')$, thus, by assumption, $\ba\leq\ba'$, and thus $\ba=\ba'$ is a join of elements of the range of~$\mu$.

Conversely, suppose that the range of~$\mu$ generates~$L$ as a \js\ and let $\ba,\bb\in L$ such that $\ba\nleq\bb$. By assumption, there exists $(x,y)\in\cI_n$ such that $\mu(x,y)\leq\ba$ and $\mu(x,y)\nleq\bb$, that is, $(x,y)\in\varphi(\ba)\setminus\varphi(\bb)$. Therefore, $\varphi$ is one-to-one.

(iii). Suppose that~$\varphi$ is a \jh\ and let $(x,y)\in\cI_n$ and $\ba,\bb\in L$ such that $\mu(x,y)\leq\ba\vee\bb$. This means that $(x,y)$ belongs to $\varphi(\ba\vee\bb)=\varphi(\ba)\vee\varphi(\bb)=\Cl(\varphi(\ba)\cup\varphi(\bb))$, thus there exists a subdivision $x=z_0<z_1<\cdots<z_m=y$ in~$[n]$ such that $(z_i,z_{i+1})\in\varphi(\ba)\cup\varphi(\bb)$ for each $i<m$; that is, either $\mu(z_i,z_{i+1})\leq\ba$ or $\mu(z_i,z_{i+1})\leq\bb$. Therefore, $\mu$ satisfies the V-condition.

Conversely, suppose that~$\mu$ satisfies the V-condition, let $\ba,\bb\in L$, and let $(x,y)\in\varphi(\ba\vee\bb)$, we must prove that $(x,y)\in\varphi(\ba)\vee\varphi(\bb)$. Since~$\mu$ and~$\varphi$ are dual, $\mu(x,y)\leq\ba\vee\bb$, thus, as~$\mu$ satisfies the V-condition, there exists a subdivision $x=z_0<z_1<\cdots<z_m=y$ in~$[n]$ such that~$\mu(z_i,z_{i+1})$ is contained in either~$\ba$ or~$\bb$ for each $i<m$; so $(z_i,z_{i+1})\in\varphi(\ba)\cup\varphi(\bb)$ for each $i<m$, and so $(x,y)\in\varphi(\ba)\vee\varphi(\bb)$.
\end{proof}

We apply Propositions~\ref{P:UpolDualHom} and~\ref{P:betwmuphi} to the following two embedding results.

\begin{theorem}\label{T:EmbBn01}
Let~$m$ and~$n$ be natural numbers. Then the lattice~$\sB(m,n)$ embeds into some Tamari lattice if{f} either $m\leq1$ or~$n\leq1$.
\end{theorem}

\begin{proof}
If $m\geq2$ and $n\geq2$, then~$\sB(2,2)$ embeds into~$\sB(m,n)$, thus, by Theorem~\ref{C:B22Config}, $\sB(m,n)$ cannot be embedded into any Tamari lattice. Hence, since every Tamari lattice is self-dual and by Lemma~\ref{L:DualBmn}, it suffices to prove that both~$\sB(m,0)$ and~$\sB(m,1)$ embed into~$\sA(m+2)$, for every positive integer~$m$. Since~$\sB(m,0)$ is distributive with~$m+1$ \jirr\ elements, the result for that lattice follows from Markowsky~\cite[page~288]{Mark92}. It remains to deal with~$\sB(m,1)$. It is convenient to describe the embedding by a polarized measure $\mu\colon\cI_{m+2}\to\sB(m,1)$. We set
 \[
 \ba_X=\bigvee_{i\in X}\ba_i\,,\quad\text{for each }X\subseteq[m]\,.
 \]
The measure $\mu\colon\cI_{m+2}\to\sB(m,1)$ is given (setting $\bb=\bb_1$) by
 \begin{align*}
 \mu(k,l)&=\ba_{[k,l-1]}\,,&&\text{for }1\leq k<l\leq m+1\,,\\
 \mu(k,m+2)&=\ba_{[k,m]}\vee\bb\,,&&\text{for }2\leq k\leq m+1\,,\\
 \mu(1,m+2)&=\bp\,. 
 \end{align*}
It is straightforward to verify that $\mu$ is a polarized V-measure. The conclusion follows then from Propositions~\ref{P:UpolDualHom} and~\ref{P:betwmuphi}.
\end{proof}

\begin{proposition}\label{P:EmbBm2}
The lattice $\sB(m,2)$ has a $(0,1)$-lattice embedding into the Cambrian lattice $\sA_{[m+2,2m+1]}(2m+2)$, for every positive integer~$m$.
\end{proposition}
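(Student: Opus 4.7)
The plan is to mirror the construction from the proof of Theorem~\ref{T:EmbBn01} by exhibiting an $\sB(m,2)$-valued $U$-polarized measure $\mu\colon\cI_{2m+2}\to\sB(m,2)$ on the chain~$[2m+2]$, with $U:=[m+2,2m+1]$, that satisfies the V-condition, never takes the value~$0$, and whose range $(\vee,0)$-generates~$\sB(m,2)$. Granted such a~$\mu$, Propositions~\ref{P:UpolDualHom} and~\ref{P:betwmuphi} furnish a dual map $\varphi\colon\sB(m,2)\to\sP_U(2m+2)$ that is simultaneously a lattice homomorphism, one-to-one, and sends~$0$ to~$\es$ and~$1$ to~$\cI_{2m+2}$, hence the desired $0,1$-lattice embedding.

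The design of~$\mu$ is guided by three features. First, the doubled element~$\bp$ is placed at the longest pair, $\mu(1,2m+2):=\bp$. Second, the atoms $\ba_1,\dots,\ba_m$ are distributed along consecutive edges of the two halves $[1,m+1]$ and $[m+2,2m+2]$ in a mirror-symmetric fashion, so that $\mu$-values on pairs entirely within one half lie in the Boolean sublattice $\setm{\ba_X}{X\subseteq[m]}$. Third, $\bb_1$ appears as~$\mu$ of an edge anchored at the left extremity~$1$, and $\bb_2$ appears as~$\mu$ of an edge anchored at the right extremity~$2m+2$. The orientation $U=[m+2,2m+1]$ precisely matches this left/right split, so that the polarization axioms~(ii) and~(iii) of Definition~\ref{D:PolMeas} reduce to a routine induction on the length of~$[x,y]$.

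The hard part is the V-condition, in particular the only non-trivial refinement, $\mu(1,2m+2)=\bp\leq\ba\vee\bb$ with $\bp\not\leq\ba$ and $\bp\not\leq\bb$. Using the almost-distributivity of~$\sB(m,2)$ (cf.\ Jipsen and Rose~\cite[Lemma~4.11]{JiRo}), such a configuration forces the Boolean supports of~$\ba$ and~$\bb$ to jointly cover all the atoms $\ba_1,\dots,\ba_m$ together with at least one of~$\bb_1,\bb_2$, each such generator lying entirely below~$\ba$ or entirely below~$\bb$. This combinatorial description then yields a subdivision $1=z_0<z_1<\cdots<z_k=2m+2$ along which each $\mu(z_i,z_{i+1})$ falls below~$\ba$ or below~$\bb$. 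The remaining refinements, where $\mu(x,y)$ already lies in the Boolean sublattice, follow from the distributive V-condition, immediate from the edge-by-atom construction; that the range of~$\mu$ generates $\sB(m,2)$ and avoids~$0$ is then visible by inspection.
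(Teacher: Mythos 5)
Your overall strategy is exactly the paper's: construct an $\sB(m,2)$-valued $U$-polarized measure on the chain $[2m+2]$ with $U=[m+2,2m+1]$, check the V-condition, non-vanishing, and \jz-generation, and then invoke Propositions~\ref{P:UpolDualHom} and~\ref{P:betwmuphi}. The skeleton you describe (the doubled element $\bp$ on the extremal pair, the atoms laid out consecutively and mirror-symmetrically on the two halves $[1,m+1]$ and $[m+2,2m+2]$, the generators $\bb_1,\bb_2$ on edges at the two extremities, the orientation $U$ matching the left/right split) coincides with the paper's construction up to swapping $\bb_1$ and $\bb_2$.

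There is, however, a genuine gap: you never define $\mu(x,y)$ for the pairs straddling the midpoint, other than the single pair $(1,2m+2)$. These values are the crux of the construction. Condition~(i) of Definition~\ref{D:PolMeas} for a triple $x<y<z$ with $x$ in the left half and $z$ in the right half, and likewise the V-condition for straddling pairs, cannot be verified without them, and they are nontrivially constrained by the values already placed on the two halves. The paper's choice (phrased on the order-isomorphic chain $[-m-1,m+1]\setminus\set{0}$ with $U=[1,m]$) is $\nu(-i,j):=\ba_1\vee\cdots\vee\ba_{i\wedge j}$ for $i,j\in[m+1]$ not both equal to $m+1$; that is, the straddling value is governed by the smaller of the two distances to the midpoint. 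This is precisely what makes the triangle inequality work for straddling triples and what allows the refinement of $\mu(x,y)=\ba_1\vee\cdots\vee\ba_k$ for a straddling pair to thread through whichever half is shorter, using one straddling edge; so your claim that these remaining refinements are ``immediate from the edge-by-atom construction'' does not hold as stated, and the polarization axioms are pointwise conditions on triples rather than something one proves ``by induction on the length of $[x,y]$''. Your treatment of the one hard refinement $\bp\leq\ba\vee\bb$ via almost-distributivity is fine in spirit (the paper simply uses the explicit minimal join-covers $\bp<\ba_1\vee\cdots\vee\ba_m\vee\bb_l$), but the explicit subdivisions realizing it again depend on the missing straddling values.
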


\begin{proof}
We shall define the embedding \emph{via} a $[m+2,2m+1]$-polarized
measure on $[2m+2]$, by using Propositions~\ref{P:UpolDualHom}
and~\ref{P:betwmuphi}. It will be more convenient to construct the
measure on the totally ordered set $\Lambda=[-m-1,m+1]\setminus\set{0}$ 
(which is isomorphic to the interval $[2m+2]$) and to prove that it is $U$-polarized with $U=[1,m]$.

We denote by $\ba_1$, \dots, $\ba_m$, $\bb_1$, $\bb_2$, and $\bp$ the \jirr\ elements of~$\sB(m,2)$, with $\bigvee_{1\leq i\leq m}\ba_i<\bp$. We denote by $\mu\colon\so{[0,m+1]}\to\sB(m,1)$ the polarized measure given by the isomorphism $[0,m+1]\cong[1,m+2]$ and the proof of Theorem~\ref{T:EmbBn01}. In particular, $\mu(i-1,i)=\ba_i$ for $1\leq i\leq m$, $\mu(m,m+1)=\bb_1$, and $\mu(0,m+1)=\bp$. Moreover, set $A=\setm{\ba_i}{1\leq i\leq m}$.

We denote by $\sB'(m,1)$ the copy of~$\sB(m,1)$, within~$\sB(m,2)$, obtained by changing~$\bb_1$ to~$\bb_2$, and we denote by $\mu'\colon\so{[0,m+1]}\to\sB'(m,1)$ the corresponding polarized measure. In particular, $\mu'(i-1,i)=\ba_i$ for $1\leq i\leq m$, $\mu'(m,m+1)=\bb_2$, and $\mu'(0,m+1)=\bp$.

Now we define a map $\nu\colon\so{\Lambda}\to\sB(m,2)$ as follows:
 \begin{align*}
 \nu(i,j)&=\mu(i,j)\,,&&\text{for }1\leq i<j\leq m+1\,,\\
 \nu(i,j)&=\mu'(-j,-i)\,,&&\text{for }-m-1\leq i<j\leq -1\,,\\
 \nu(-i,j)&=\mu(0,i\wedge j)=\mu'(0,i\wedge j)\,,&&\text{for }i,j\in[m+1]\,. 
 \end{align*}
We claim that~$\nu$ is a $U$-polarized measure on~$\Lambda$. Let $x<y<z$ in~$\Lambda$, we need to prove that $\nu(x,z)\leq\nu(x,y)\vee\nu(y,z)$, while $y\in U$ implies that $\nu(x,y)\leq\nu(x,z)$ and $y\notin U$ implies that $\nu(y,z)\leq\nu(x,z)$.

If either $z<0$ or $x>0$, then the result follows from~$\mu'$ and $\mu$ being 
$U$-polarized measures. Now assume that $x<0$ and $z>0$ and set $x'=-x$. Then $\nu(x,z)=\mu(0,x'\wedge z)$. If $y\in U$, then $\nu(x,y)=\mu(0,x'\wedge y)\leq\mu(0,x'\wedge z)=\nu(x,z)$. Further, $x'\leq y$ implies that $x'\wedge y=x'\wedge z=x'$, thus $\mu(x'\wedge y,x'\wedge z)=0\leq\mu(y,z)$; while $y\leq x'$ implies that $\mu(x'\wedge y,x'\wedge z)=\mu(y,x'\wedge z)\leq\mu(y,z)$ (because~$\mu$ is a polarized measure). In each case, $\mu(x'\wedge y,x'\wedge z)\leq\mu(y,z)$, so
 \[
 \nu(x,z)=\mu(0,x'\wedge z)\leq
 \mu(0,x'\wedge y)\vee\mu(x'\wedge y,x'\wedge z)\leq\nu(x,y)\vee\nu(y,z)\,.
 \]

If $y\notin U$, then the element $y'=-y$ belongs to~$U$ and
$y'<x'$. Further, $\nu(y,z)=\mu'(0,y'\wedge z)\leq\mu'(0,x'\wedge
z)=\nu(x,z)$. As above, $\mu'(y'\wedge z,x'\wedge z)\leq\mu'(y',x')$, so we obtain
 \[
 \nu(x,z)=\mu'(0,x'\wedge z)\leq
 \mu'(0,y'\wedge z)\vee\mu'(y'\wedge z,x'\wedge z)\leq\nu(y,z)\vee\nu(x,y)\,.
 \]
This completes the proof of~$\nu$ being a $U$-polarized measure.

Since $\nu(-m-1,m+1)=\bp$, $\nu(-1,1)=\ba_1$, $\nu(m,m+1)=\bb_1$, $\nu(-m-1,-m)=\bb_2$, and $\nu(i,i+1)=\ba_{i+1}$ for $1\leq i\leq m-1$, the range of~$\nu$ generates~$\sB(m,2)$ as a \jzs. Now it remains to verify the V-condition. In order to do this, it suffices to prove that for every $(x,y)\in\so{\Lambda}$, every positive integer~$n$, and every minimal join-covering in~$\sB(m,2)$ of the form $\nu(x,y)\leq\bigvee_{1\leq j\leq n}\bc_j$ (\emph{observe that by minimality, all~$\bc_j$ are \jirr}), there are a positive integer~$k$ and a subdivision $x=z_0<z_1<\cdots<z_k=y$ in~$\Lambda$ such that each $\nu(z_i,z_{i+1})$ is contained in some~$\bc_j$. Since~$\bp$ is the only \jirr\ element of~$\sB(m,2)$ that is not join-prime, it suffices to solve this problem in each case $\nu(x,y)=\bigvee_{1\leq j\leq n}\bc_j$ with $n\geq2$, and $\nu(x,y)=\bp<\bigvee_{1\leq j\leq n}\bc_j$.

We begin with the first case. Since all the~$\bc_j$ belong to $A\cup\set{\bb_1}$ if $x>0$ and to $A\cup\set{\bb_2}$ if $y<0$, our refinement problem has a solution if either $x>0$ or $y<0$ (because~$\mu$ and~$\mu'$ are V-measures to~$\sB(m,1)$ and~$\sB'(m,1)$, respectively). Suppose now that $x<0$ and $y>0$; set $x'=-x$. Then $\mu(0,x'\wedge y)=\nu(x,y)=\bigvee_{1\leq j\leq n}\bc_j$. We separate cases. If $x'\leq y$, then $\mu(0,x')=\bigvee_{1\leq j\leq n}\bc_j$ is a minimal join-covering with $n\geq2$, thus $x'\leq m$ (\emph{this is because $\mu(0,m+1)=\bp$}) and our join-covering is equivalent, up to permutation, to $\nu(x,y)=\bigvee_{1\leq j\leq x'}\ba_j$, for which a refinement is given by the subdivision $x<x+1<\cdots<-1<y$, with successive measures $\nu(x,x+1)=\ba_{x'}$, $\nu(x+1,x+2)=\ba_{x'-1}$, \dots, $\nu(-2,-1)=\ba_2$, $\nu(-1,y)=\ba_1$. If $x'\geq y$, then $\mu(0,y)=\bigvee_{1\leq j\leq n}\bc_j$ is a minimal join-covering with $n\geq2$, thus $y\leq m$ and our join-covering is equivalent, up to permutation, to $\nu(x,y)=\bigvee_{1\leq j\leq y}\ba_j$, for which a refinement is given by the subdivision $x<1<\cdots<y-1<y$, with successive measures $\nu(x,1)=\ba_1$, $\nu(1,2)=\ba_2$, \dots, $\nu(y-1,y)=\ba_y$.

It remains to deal with the minimal join-coverings of the form $\nu(x,y)=\bp<\bigvee_{1\leq j\leq n}\bc_j$. Necessarily, $x=-m-1$, $y=m+1$, and our covering is equivalent, up to permutation, to a covering of the form 
 \[
 \nu(-m-1,m+1)=\bp<\ba_1\vee\ba_2\vee\cdots\vee\ba_m\vee\bb_l\,,
 \quad\text{for some }l\in\set{1,2}\,.
 \]
If $l=1$, then a refinement is given by $-m-1<1<2<\cdots<m<m+1$, with successive measures~$\ba_1$, $\ba_2$, \dots, $\ba_m$, $\bb_1$. If $l=2$, then a refinement is given by $-m-1<-m<-m+1<\cdots<-1<m+1$, with successive measures~$\bb_2$, $\ba_m$, $\ba_{m-1}$, \dots, $\ba_1$.
\end{proof}

In particular, it follows from Proposition~\ref{P:EmbBm2}
that~$\sB(2,2)$ has a lattice embedding into~$\sA_{\set{4,5}}(6)$,
thus into~$\sP(6)$. It can be shown that~$\sB(2,2)$ has no lattice embedding into~$\sP(n)$, for $n \leq 5$.

\section{A lattice that cannot be embedded into any permutohedron}\label{S:B33}

The main goal of the present section is to provide a proof of the following result, which implies that \emph{not every finite bounded lattice can be embedded into a permutohedron}.

\begin{theorem}\label{T:B33}
The lattice~$\sB(3,3)$ cannot be embedded into any permutohedron.
\end{theorem}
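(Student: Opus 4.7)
The plan is to reduce the problem, via the subdirect decomposition of Section~\ref{S:SubdDec} and the polarized-measure framework of Section~\ref{S:PolMeas}, to a combinatorial refinement argument on~$\cI_\ell$, where the ``$3$-versus-$3$'' symmetry of~$\sB(3,3)$ should make simultaneous refinements incompatible.

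First, I would reduce to embeddings into Cambrian lattices. Suppose, for contradiction, that $\sB(3,3)$ embeds into $\sP(\ell)$ for some~$\ell$. By Proposition~\ref{P:DecompPU(n)}, $\sP(\ell)$ is a subdirect product of the lattices $\sP_U(\ell)$ with $U\subseteq[\ell]$, so the composition yields an embedding $\sB(3,3)\hookrightarrow\prod_U\sP_U(\ell)$. Since $\sB(3,3)$ is subdirectly irreducible (recorded in Section~\ref{S:Veg2}), a standard argument on the kernels of the projections forces at least one composition $\sB(3,3)\to\sP_U(\ell)$ to be injective; fix such a $U$ and such an embedding.

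Next, I would translate this embedding into a polarized measure. By Propositions~\ref{P:UpolDualHom} and~\ref{P:betwmuphi}, the embedding corresponds to an $\sB(3,3)$-valued $U$-polarized V-measure $\mu\colon\cI_\ell\to\sB(3,3)$ whose range generates $\sB(3,3)$ as a \jzs. Each join-irreducible of~$\sB(3,3)$, and in particular~$\bp$, must lie in the range of~$\mu$. I fix a pair $(x,y)\in\cI_\ell$ with $\mu(x,y)=\bp$ such that $y-x$ is minimal. Then, for each $j\in\{1,2,3\}$, the relation $\bp\leq\bigl(\bigvee_{i=1}^{3}\ba_i\bigr)\vee\bb_j$ holds in~$\sB(3,3)$, so iterated applications of the V-condition yield a subdivision $x=z_0^{(j)}<\cdots<z_{m_j}^{(j)}=y$ on which each value $\mu(z_k^{(j)},z_{k+1}^{(j)})$ is bounded above by one of $\ba_1,\ba_2,\ba_3,\bb_j$. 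By the minimality assumption, none of these piece-values equals~$\bp$, and one may refine further so as to assume each piece is exactly one of the four listed join-irreducibles.

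The main step is then to show that these three simultaneous refinements of the single interval $[x,y]$ cannot coexist. I would form the common refinement of the three subdivisions and, on each of its atomic pieces, track which of the join-irreducibles $\ba_i,\bb_j$ the value of~$\mu$ can be, using the $U$-polarization constraints~(ii)--(iii) of Definition~\ref{D:PolMeas} to propagate information across vertices lying in~$U$ versus $[\ell]\setminus U$. I expect that this must be coupled with its dual: by Lemma~\ref{L:DualBmn} (self-duality of $\sB(3,3)$) together with Corollary~\ref{C:PU(n)dual}, the same combinatorial analysis applies to an $([\ell]\setminus U)$-polarized V-measure representing the dual embedding, and matching the two sides should force three distinct $\bb_j$'s into slots that the $3\times3$ structure of~$\sB(3,3)$ forbids. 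Making this combinatorial incompatibility precise, and understanding exactly why it fails to appear for $\min\set{m,n}\leq2$ (in contrast with the embeddings of $\sB(m,1)$ and $\sB(m,2)$ constructed in Theorem~\ref{T:EmbBn01} and Proposition~\ref{P:EmbBm2}), is the main obstacle; the authors' remark that the natural separating identity for~$\sB(3,3)$ does hold in a specific~$\sP_U(12)$ indicates that no identity-based shortcut is available, so the contradiction must be extracted from the fine structure of the measure itself.
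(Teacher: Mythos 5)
Your reduction steps are sound and coincide with the paper's: pass to a Cambrian lattice $\sP_U(\ell)$ via subdirect irreducibility of $\sB(3,3)$, dualize the embedding to a $U$-polarized V-measure $\mu\colon\cI_\ell\to\sB(3,3)$ whose range \jz-generates the lattice (modulo the small technicality, which you skip, that one must first adjoin a top element to $\sB(3,3)$ if the embedding is not unit-preserving, so that the duality of Proposition~\ref{P:UpolDualHom} applies), pick $(x,y)$ with $\mu(x,y)=\bp$ and $y-x$ minimal, and extract for each $j$ a subdivision of $[x,y]$ whose pieces are bounded by $\ba_1,\ba_2,\ba_3$ or $\bb_j$. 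Up to this point you are reconstructing Sections~\ref{S:SubdDec}--\ref{S:PolMeas} and the opening of Section~\ref{S:B33} faithfully.

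The main step, however, is left as an acknowledged gap, and the plan you sketch for it is not the right one. Taking the common refinement of the three subdivisions does not behave well: if $z<w<z'$ with $\mu(z,z')\leq\ba_l$, then $U$-polarization controls only \emph{one} of $\mu(z,w)$, $\mu(w,z')$ (according to whether $w\in U$ or not), so the refined pieces need not remain bounded by single \jirr\ elements. Likewise, invoking the dual measure via self-duality of $\sB(3,3)$ and Corollary~\ref{C:PU(n)dual} is not how the contradiction is obtained. The missing idea is the notion of a \emph{peak}: in a subdivision subordinate to $\bb_i$ there is an index $j$ with $z_j\in U\cup\set{x}$ and $z_{j+1}\in U^\cpl\cup\set{y}$, and at that crossing one must have $\mu(z_j,z_{j+1})\leq\bb_i$ while $\mu(x,z_j)\leq\ba$ and $\mu(z_{j+1},y)\leq\ba$ (using minimality of $y-x$ and polarization). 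After normalizing so that all intermediate points left of the peak lie in $U$ and all those right of it lie in $U^\cpl$, one compares the three peaks $(s_i,t_i)$: if, say, $t_1\leq t_2\leq t_3$, then the tail $\mu(t_2,y)$ is forced below a \emph{single} $\ba_l$, and dually the head below a single $\ba_k$, whence $\bp\leq\ba_k\vee\ba_l\vee\bb_j$ --- impossible because every minimal join-cover of $\bp$ needs all three of $\ba_1,\ba_2,\ba_3$. This is exactly where ``three versus three'' enters (and why $\sB(m,2)$ escapes), and it is the content you would still need to supply.
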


In order to prove Theorem~\ref{T:B33}, we denote, as in earlier sections, the \jirr\ elements of~$\sB(3,3)$ by~$\ba_1$, $\ba_2$, $\ba_3$, $\bb_1$, $\bb_2$, $\bb_3$, and~$\bp$, with $\ba_i<\bp$ for each $i\in\set{1,2,3}$. We also set $\ba=\ba_1\vee\ba_2\vee\ba_3$. We suppose that there exists a lattice embedding $\varphi\colon\sB(3,3)\hookrightarrow\sP(\ell)$ for some positive integer~$\ell$. Now~$\sP(\ell)$ is a subdirect product of its associated Cambrian lattices~$\sA_U(\ell)$ (cf. Proposition~\ref{P:DecompPU(n)}), thus, since~$\sB(3,3)$ is subdirectly irreducible (cf. Jipsen and Rose \cite[Theorem~4.17]{JiRo}), there is a lattice embedding $\psi\colon\sB(3,3)\hookrightarrow\sA_U(\ell)$ for some $U\subseteq[\ell]$. Now we define a new lattice~$K$ by setting
 \[
 K=\begin{cases}
 \sB(3,3)\,,&\quad\text{if }\psi(1_{\sB(3,3)})=1_{\sA_U(n)}\,,\\
 \sB(3,3)\cup\set{\infty}\,,&\quad\text{otherwise},
 \end{cases}
 \]
and we extend~$\psi$ to~$K$ by setting $\psi(\infty)=1_{\sA_U(n)}$ (in case $\psi(1_{\sB(3,3)})\neq1_{\sA_U(n)}$). Now~$\psi$ is an unit-preserving lattice embedding from~$K$ into~$\sA_U(\ell)$. By Proposition~\ref{P:betwmuphi}, the range of the dual $U$-polarized measure $\mu\colon\cI_{\ell}\to K$ generates~$K$ as a \jzs.

In particular, $\bp$ is a join of elements in the range of~$\mu$. Since~$\bp$ is \jirr, it follows that there exists $(x,y)\in\cI_{\ell}$ such that $\bp=\mu(x,y)$. Pick such an $(x,y)$ with $y-x$ minimal. For each $i\in[3]$, we say that a subdivision $x=z_0<z_1<\cdots<z_{n}=y$ is \emph{subordinate to~$\bb_i$} if
 \begin{equation}\label{Eq:SubdivzijB33}
 \text{either }\mu(z_j,z_{j+1})\leq\bb_i\text{ or }
 \mu(z_j,z_{j+1})\leq\ba_l\text{ for some }l\in[3]\,,
 \text{ for each }j<n\,.
 \end{equation}
Since~$\mu$ is a V-measure and $\mu(x,y)=\bp\leq\ba_1\vee\ba_2\vee\ba_3\vee\bb_i$, there exists certainly such a subdivision.
Observe that as $\bp\leq\ba_1\vee\ba_2\vee\ba_3\vee\bb_i$ is a minimal covering, each element of $\set{\ba_1,\ba_2,\ba_3,\bb_i}$ appears at least once among the elements $\mu(z_j,z_{j+1})$. In particular, $n\geq4$.

Recall that~$U^\cpl$ denotes the complement of~$U$. Say that a \emph{peak index} of a subdivision $x=z_0<z_1<\cdots<z_n=y$ is an index $j\in[0,n-1]$ such that $z_j\in U\cup\set{x}$ and $z_{j+1}\in U^\cpl\cup\set{y}$. We shall call the pair $(z_j,z_{j+1})$ the \emph{peak associated to~$j$}.

\begin{lemma}\label{L:ExistsPeak}
Let $i\in[3]$.
Each subdivision $x=z_0<z_1<\cdots<z_{n}=y$ subordinate to~$\bb_i$ has a peak index. Furthermore, $\mu(x,z_j)\leq\ba$ and $\mu(z_{j+1},y)\leq\ba$ while $\mu(z_j,z_{j+1})\leq\bb_i$, for each peak index~$j$.
\end{lemma}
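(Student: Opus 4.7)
I will split the argument into the existence of a peak index and the verification of the three inequalities that must hold at any such peak.

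For existence, the key observation is that $z_0 = x$ lies in $U \cup \set{x}$ while $z_n = y$ lies in $U^\cpl \cup \set{y}$, so a discrete intermediate-value argument will locate a transition. Concretely, I will let~$l$ be the least index in~$[1, n]$ with $z_l \in U^\cpl \cup \set{y}$; this is well defined since $l = n$ is admissible. By minimality of~$l$, the point $z_{l-1}$ is either $x$ itself (when $l = 1$) or an interior point that fails to lie in $U^\cpl \cup \set{y}$; since interior points are partitioned by $U$ and $U^\cpl$, the latter case forces $z_{l-1} \in U$. Either way, $z_{l-1} \in U \cup \set{x}$, so $j := l-1$ is a peak index.

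Next, I fix a peak index~$j$ and tackle the outer bounds $\mu(x, z_j) \leq \ba$ and $\mu(z_{j+1}, y) \leq \ba$, which are perfectly symmetric. For the first: if $j = 0$ there is nothing to check since $\mu(x, x) = 0$. If $j \geq 1$, then $z_j \neq x$, so the peak condition forces $z_j \in U$; axiom~(ii) of Definition~\ref{D:PolMeas} applied to $x < z_j < y$ then gives $\mu(x, z_j) \leq \mu(x, y) = \bp$. The minimality of $y - x$ in the choice of $(x, y)$ combined with the strict inequality $z_j < y$ rules out equality, forcing $\mu(x, z_j) < \bp$. Inspecting $\sB(3, 3)$ one sees that every element strictly below~$\bp$ lies below~$\ba$, hence $\mu(x, z_j) \leq \ba$. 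The bound for $\mu(z_{j+1}, y)$ is identical in spirit, using axiom~(iii) in place of~(ii) and $z_{j+1} > x$ in place of $z_j < y$.

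Finally, the inner inequality $\mu(z_j, z_{j+1}) \leq \bb_i$ will follow by contradiction: if it failed, then by the definition of subordination $\mu(z_j, z_{j+1})$ would be below some $\ba_l$ and hence below $\ba$. Combined with the two outer bounds and axiom~(i) iterated once, this would yield
\[
\bp \;=\; \mu(x, y) \;\leq\; \mu(x, z_j) \vee \mu(z_j, z_{j+1}) \vee \mu(z_{j+1}, y) \;\leq\; \ba,
\]
contradicting $\bp \not\leq \ba$ in $\sB(3, 3)$. I do not expect a genuine obstacle: the whole lemma is a direct unpacking of the $U$-polarized measure axioms at the minimal witness $(x, y)$, the only mildly delicate point being the discrete intermediate-value argument used to locate a peak.
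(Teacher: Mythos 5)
Your proof is correct and follows essentially the same route as the paper's: a discrete intermediate-value argument for the existence of a peak (the paper's version of taking the least $j$ with $z_{j+1}\in U^\cpl\cup\set{y}$), the polarized-measure axioms plus minimality of $y-x$ for the outer bounds (you just make explicit the step $\mu(x,z_j)<\bp\Rightarrow\mu(x,z_j)\leq\ba$, which uses $\bp_*=\ba$ and which the paper leaves implicit), and the same join computation, phrased contrapositively, for $\mu(z_j,z_{j+1})\leq\bb_i$.
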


\begin{note}
In the statement above, we are using again the convention $\mu(z,z)=0$ for each $z\in[\ell]$.
\end{note}

\begin{proof}
If $z_j\in U\cup\set{x}$ for some $j\in[0,n-1]$, then, taking the largest such~$j$, we obtain that $z_{j+1}\in U^\cpl\cup\set{y}$. On the other hand, if $z_{j+1}\in U^\cpl\cup\set{y}$ for some $j\in[0,n-1]$, then, taking the least such~$j$, we obtain that $z_j\in U\cup\set{x}$. In both cases, $j$ is a peak index; thus such an index always exists.

Let~$j$ be a peak index. {}From~$\mu$ being a $U$-polarized measure it follows that $\mu(x,z_j)\leq\bp$ and $\mu(z_{j+1},y)\leq\bp$. Therefore, by the minimality assumption on $y-x$, it follows that $\mu(x,z_j)\leq\ba$ and $\mu(z_{j+1},y)\leq\ba$, hence
 \[
 \bp=\mu(x,y)\leq\mu(x,z_j)\vee\mu(z_j,z_{j+1})\vee\mu(z_{j+1},y)
 \leq\ba\vee\mu(z_j,z_{j+1})\,.
 \]
Since~$\bp\nleq\ba$, it follows that $\mu(z_j,z_{j+1})\nleq\ba$, thus, by~\eqref{Eq:SubdivzijB33}, $\mu(z_j,z_{j+1})\leq\bb_i$.
\end{proof}

Say that a subdivision subordinate to~$\bb_i$ is \emph{normal} if it has a peak index~$j$ such that for each $k\in[0,n-1]\setminus\set{j}$ there exists $l\in[3]$ such that $\mu(z_k,z_{k+1})\leq\ba_l$.

\begin{lemma}\label{L:NormSubd}
There exists a normal subdivision subordinate to~$\bb_i$, for each index $i\in[3]$. Furthermore, for each such subdivision $x=z_0<z_1<\cdots<z_{n}$ and each $k\in[n-1]$, $k\leq j$ implies that $z_k\in U$ while $j+1\leq k$ implies that $z_k\in U^\cpl$.
\end{lemma}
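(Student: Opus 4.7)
My plan is to prove the two assertions separately: existence via iterated use of the V-condition, and the $U$-classification of interior points via a parity argument on a Boolean sequence encoding $U$-membership. For existence, apply the V-condition to the covering $\mu(x,y)=\bp\leq(\ba_1\vee\ba_2\vee\ba_3)\vee\bb_i$ to produce some subdivision of $[x,y]$ subordinate to~$\bb_i$; Lemma~\ref{L:ExistsPeak} then locates a peak index~$j_0$ whose segment has measure~$\leq\bb_i$ and whose outer pieces $[x,z_{j_0}]$ and~$[z_{j_0+1},y]$ carry measure~$\leq\ba$. Two further applications of the V-condition---first to $\ba=\ba_1\vee(\ba_2\vee\ba_3)$, then to $\ba_2\vee\ba_3$---refine each outer piece into a subdivision all of whose segments are dominated by some single~$\ba_l$. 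Concatenating with the middle segment yields a normal subdivision whose designated peak~$j$ is the new position of~$z_{j_0}$; the peak condition survives because no point is inserted strictly between $z_{j_0}$ and~$z_{j_0+1}$.

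For the classification, fix any normal subdivision $x=z_0<z_1<\cdots<z_n=y$ with designated peak~$j$, and encode its $U$-membership pattern by setting $u_0:=1$, $u_n:=0$, and, for $1\leq k\leq n-1$, $u_k:=1$ when $z_k\in U$ and $u_k:=0$ otherwise. Peak indices of the subdivision correspond bijectively to positions~$k$ with $u_k=1$ and $u_{k+1}=0$. Since the sequence $(u_k)_{0\leq k\leq n}$ begins at~$1$ and ends at~$0$, the number of $1$-to-$0$ transitions exceeds the number of $0$-to-$1$ transitions by exactly one; thus if~$j$ is the sole peak, the sequence must read $(1,\ldots,1,0,\ldots,0)$ switching at position~$j$, which is precisely the claimed classification.

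It remains to exclude every non-designated peak. If $k\neq j$ were another peak, Lemma~\ref{L:ExistsPeak} would give $\mu(z_k,z_{k+1})\leq\bb_i$, while the normality hypothesis at $k\neq j$ gives $\mu(z_k,z_{k+1})\leq\ba_l$ for some~$l$; combining, $\mu(z_k,z_{k+1})\leq\bb_i\wedge\ba_l=0$ in~$\sB(3,3)$. The main obstacle will be ruling out this degenerate case, since the statement is asserted for \emph{every} normal subdivision. I would argue either that one may assume (after a suitable adjustment of the embedding~$\psi$) that~$0$ does not belong to the range of~$\mu$, or that any zero segment at a non-designated index may be excised from the subdivision by deleting one of its endpoints and applying axiom~(i) of Definition~\ref{D:PolMeas}, yielding a strictly shorter normal subdivision with the same designated peak~$j$; an induction on~$n$ then reduces to the nondegenerate case, where the parity argument above completes the proof.
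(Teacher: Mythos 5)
Your existence argument coincides with the paper's: locate a peak of some subordinate subdivision via Lemma~\ref{L:ExistsPeak}, refine the two outer pieces by iterated applications of the V-condition to $\ba_1\vee(\ba_2\vee\ba_3)$, and concatenate. That half is correct.

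The classification half has a genuine gap, and it is exactly the one you flag. Your parity argument is sound \emph{once} the designated index~$j$ is known to be the unique peak index, but your uniqueness step only yields $\mu(z_k,z_{k+1})\leq\bb_i\wedge\ba_l=0_{\sB(3,3)}$ for a second peak~$k$, and nothing in the setup forbids $0_{\sB(3,3)}$ from lying in the range of~$\mu$: the embedding~$\psi$ was only normalized at the top (by adjoining~$\infty$), not at the bottom, so $\psi(0)$ need not be~$\es$. Neither proposed repair is in working order as stated. Adjoining a new bottom to~$K$ (the analogue of the top normalization) changes~$0_K$ but not the meet $\bb_i\wedge\ba_l$, which remains $0_{\sB(3,3)}$, so the degenerate case survives; I see no ``suitable adjustment of~$\psi$'' that kills it. The excision-plus-induction route can likely be completed, but the induction hypothesis says nothing about the deleted point, so you must always delete the endpoint of the zero segment that is \emph{not} the point whose $U$-membership is contested, check that the merged segment does not swallow the designated peak segment (the degenerate peak can be adjacent to it), and treat the boundary case $k=0$ separately (there, $\mu(x,z_1)=0$ forces $\mu(z_1,y)=\bp$, contradicting minimality of $y-x$). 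None of this is carried out.

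The paper sidesteps uniqueness of the peak entirely: if $k\leq j$ and $z_k\in U^\cpl$, then normality gives $\mu(x,z_k)\leq\ba$, axiom~(iii) of Definition~\ref{D:PolMeas} gives $\mu(z_k,y)\leq\mu(x,y)=\bp$, minimality of $y-x$ downgrades this to $\mu(z_k,y)\leq\ba$, whence $\bp=\mu(x,y)\leq\mu(x,z_k)\vee\mu(z_k,y)\leq\ba$, a contradiction; the case $j+1\leq k$ is symmetric. This direct argument is insensitive to zero segments, and I recommend substituting it for the parity scheme.
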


\begin{note}
This implies, of course, that the peak index of a normal subdivision is unique.
\end{note}

\begin{proof}
By Lemma~\ref{L:ExistsPeak}, every subdivision $x=z_0<z_1<\cdots<z_{n}=y$ subordinate to~$\bb_i$ has a peak index~$j$, while $\mu(x,z_j)\leq\ba$ and $\mu(z_{j+1},y)\leq\ba$. Since $\ba=\ba_1\vee\ba_2\vee\ba_3$ and as~$\mu$ is a V-measure, there are natural numbers~$p$, $q$ and decompositions $x=s_0<s_1<\cdots<s_p=z_j$ and $z_{j+1}=s_{p+1}<s_{p+2}<\cdots<s_{p+q+1}=y$ such that for each $k\in[0,p+q]\setminus\set{p}$ there exists $l\in[3]$ such that $\mu(s_k,s_{k+1})\leq\ba_l$. Obviously, the subdivision
 \[
 x=s_0<s_1<\cdots<s_p<s_{p+1}<\cdots<s_{p+q+1}=y
 \]
is normal, with~$p$ as a peak index.

Now let $x=z_0<z_1<\cdots<z_{n}=y$ be a normal subdivision subordinate to~$\bb_i$, with peak index~$j$, and let $k\in[n-1]$. Suppose first that $k\leq j$. If $z_k\in U^\cpl$, then, as~$\mu$ is a $U$-polarized measure, $\mu(z_k,y)\leq\mu(x,y)=\bp$, thus, by the minimality assumption on~$y-x$, $\mu(z_k,y)\leq\ba$. However, from $\mu(z_l,z_{l+1})\leq\ba$ for each $l<k$ it follows that $\mu(x,z_k)\leq\ba$, thus
 \[
 \bp=\mu(x,y)\leq\mu(x,z_k)\vee\mu(z_k,y)\leq\ba\,,
 \]
\contr. It follows that $z_k\in U$. Likewise, $j+1\leq k$ implies that $z_k\notin U$.
\end{proof}

Now Lemma~\ref{L:NormSubd} ensures that for each $i\in[3]$, there exists a normal subdivision $x=z^i_0<z^i_1<\cdots<z^i_{n_i}=y$ subordinate to~$b_i$. Set $Z_i=\setm{z^i_j}{0\leq j\leq n_i}$ and denote by $(s_i,t_i)$ the unique peak of that subdivision; so $x\leq s_i<t_i\leq y$.

\begin{lemma}\label{L:Comptitj}
Let $i,j\in[3]$ be distinct. If $t_i\leq t_j$, then $\mu(t_j,y)\leq\ba_l$ for some $l\in[3]$.
\end{lemma}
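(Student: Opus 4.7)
The plan is to split the interval $[t_j, y]$ into short pieces, each bounded by a single~$\ba_l$, and then propagate a common label out to~$y$. The trivial case $t_j = y$ is immediate (using $\mu(y, y) := 0$); otherwise $t_j < y$, and then $t_j \in U^\cpl$ by Lemma~\ref{L:NormSubd}.

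The main structural step is to rule out $t_j \in Z_i$. Suppose for a contradiction that $t_j = z^i_m$; from $t_i = z^i_{j_i + 1} \leq t_j$ we get $m \geq j_i + 1$, so the $\bb_i$-peak of~$Z_i$ lies inside~$[x, t_j]$. Reading the pieces of~$Z_i$ before~$t_j$ (the peak-piece bounded by~$\bb_i$, non-peak pieces each bounded by some~$\ba_l$) yields $\mu(x, t_j) \leq \bb_i \vee \ba$; similarly, reading the pieces of~$Z_j$ before $t_j = z^j_{j_j + 1}$ gives $\mu(x, t_j) \leq \bb_j \vee \ba$. A direct computation in~$\sB(3,3)$ gives $(\bb_i \vee \ba) \wedge (\bb_j \vee \ba) = \bp$, hence $\mu(x, t_j) \leq \bp$. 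Combined with $\bp = \mu(x, y) \leq \mu(x, t_j) \vee \mu(t_j, y) \leq \mu(x, t_j) \vee \ba$ (using $\mu(t_j, y) \leq \ba$ from Lemma~\ref{L:ExistsPeak}) and $\bp \nleq \ba$, this forces $\mu(x, t_j) = \bp$, which by the minimality of~$y - x$ is incompatible with $t_j < y$.

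So $z^i_m < t_j < z^i_{m+1}$ for some $m \geq j_i + 1$. Since $t_j \notin U$, the $U$-polarization of~$\mu$ gives $\mu(t_j, z^i_{m+1}) \leq \mu(z^i_m, z^i_{m+1}) \leq \ba_l$, where $l \in [3]$ is the $\ba$-label that~$Z_i$ assigns to the piece $(z^i_m, z^i_{m+1})$.

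It remains to propagate this bound to~$y$. If $z^i_{m+1} = y$, then $\mu(t_j, y) \leq \ba_l$ and we are done. Otherwise $z^i_{m+1} \in U^\cpl$, and the same structural argument (with~$z^i_{m+1}$ in place of~$t_j$) rules out $z^i_{m+1} \in Z_j$, placing~$z^i_{m+1}$ strictly inside some post-peak $Z_j$-interval and yielding another single-$\ba$-label bound via $U$-polarization. Iterating alternately between~$Z_i$ and~$Z_j$, we advance to~$y$ in finitely many steps. The hard part will be showing that all the $\ba$-labels encountered in this alternating walk coincide, so that the accumulated bound on $\mu(t_j, y)$ collapses to a single~$\ba_l$. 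This coincidence is to be enforced by combining the V-condition applied to $\mu(t_j, y) \leq \ba$ with the join-primality of each~$\ba_l$ in~$\sB(3,3)$ and the fact that~$\bp$ is not join-prime, which together rule out any configuration of incompatible labels.
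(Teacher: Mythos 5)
Your opening moves are sound: the reduction to $t_j<y$ with $t_j\in U^\cpl$, and the exclusion of $t_j\in Z_i$ via $\mu(x,t_j)\leq(\ba\vee\bb_i)\wedge(\ba\vee\bb_j)=\bp$ together with the minimality of $y-x$, are all correct. But the proposal then stalls exactly where the real work lies. Knowing only that $t_j$ sits strictly inside \emph{some} post-peak piece $\oo{z^i_m,z^i_{m+1}}$ of $Z_i$ gives you $\mu(t_j,z^i_{m+1})\leq\ba_l$, and your ``alternating walk'' to $y$ would at best accumulate a bound of the form $\mu(t_j,y)\leq\ba_{l_1}\vee\cdots\vee\ba_{l_k}$; the lemma needs a \emph{single} $\ba_l$. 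Your proposed mechanism for forcing all labels to coincide (V-condition plus join-primality of the $\ba_l$) is not an argument --- nothing in the structure of $\sB(3,3)$ prevents consecutive pieces of a subdivision from carrying different labels $\ba_{l}$ --- and you explicitly flag this step as unproved. As written, the proof is incomplete.

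The paper's proof shows that no propagation is needed, because one can prove the stronger statement that $t_j$ lies strictly inside the \emph{last} piece of $Z_i$, i.e.\ $z^i_{n_i-1}<t_j<y$. The extra ingredient is to run your $\bp$-contradiction not at $t_j$ but at $z^i_{n_i-1}$: assuming $t_j\leq z^i_{n_i-1}$, one has $\mu(t_i,z^i_{n_i-1})\leq\ba$ (post-peak pieces of $Z_i$), hence $\mu(t_j,z^i_{n_i-1})\leq\mu(t_i,z^i_{n_i-1})\leq\ba$ by $U$-polarization at $t_j\in U^\cpl$, so $\mu(x,z^i_{n_i-1})\leq\mu(x,t_j)\vee\mu(t_j,z^i_{n_i-1})\leq\ba\vee\bb_j$, while also $\mu(x,z^i_{n_i-1})\leq\ba\vee\bb_i$ since $x,z^i_{n_i-1}\in Z_i$; meeting gives $\mu(x,z^i_{n_i-1})\leq\bp$, hence $\leq\ba$ by minimality, and then $\bp\leq\mu(x,z^i_{n_i-1})\vee\mu(z^i_{n_i-1},y)\leq\ba$, a contradiction. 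Once $z^i_{n_i-1}<t_j<y$, a single polarization step $\mu(t_j,y)\leq\mu(z^i_{n_i-1},y)\leq\ba_l$ finishes. You should replace your iteration by this one-shot argument.
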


\begin{proof}
If $t_j=y$, then the conclusion holds trivially. Thus suppose that $t_j<y$. Since $(s_j,t_j)$ is a peak, $t_j\in U^\cpl$. Moreover, $t_i<y$, thus $t_i\leq z^i_{n_i-1}<y$. Since $(s_i,t_i)$ is a peak and the subdivision associated to~$Z_i$ is normal, it follows that $\mu(z^i_{n_i-1},y)\leq\ba_l$ for some $l\in[3]$.

We claim that $z^i_{n_i-1}<t_j$. Suppose otherwise, that is, $t_j\leq z^i_{n_i-1}$. Since~$x$ and~$z^i_{n_i-1}$ both belong to~$Z_i$, the inequality $\mu(x,z^i_{n_i-1})\leq\ba\vee\bb_i$ holds. Now $t_i=z^i_m$ for some $m\in[n_i-1]$, thus, as the subdivision associated to~$Z_i$ is normal,
 \[
 \mu(t_i,z^i_{n_i-1})\leq\bigvee_{m\leq k<n_i-1}\mu(z^i_k,z^i_{k+1})\leq\ba\,.
 \]
It follows that
 \begin{align*}
 \mu(t_j,z^i_{n_i-1})&\leq\mu(t_i,z^i_{n_i-1})&&
 (\text{because }t_i\leq t_j\leq z^i_{n_i-1}\text{ and }t_j\in U^\cpl)\\
 &\leq\ba\,.
 \end{align*}
Since $\mu(x,t_j)\leq\ba\vee\bb_j$ (because~$x$ and~$t_j$ both belong to~$Z_j$), it follows that $\mu(x,z^i_{n_i-1})\leq\mu(x,t_j)\vee\mu(t_j,z^i_{n_i-1})\leq\ba\vee\bb_j$. Therefore, $\mu(x,z^i_{n_i-1})\leq(\ba\vee\bb_i)\wedge(\ba\vee\bb_j)=\bp$, thus, by the minimality statement on $y-x$,  we get $\mu(x,z^i_{n_i-1})\leq\ba$. Since $\mu(z^i_{n_i-1},y)\leq\ba_l$, it follows that $\bp=\mu(x,y)\leq\mu(x,z^i_{n_i-1})\vee\mu(z^i_{n_i-1},y)\leq\ba$, \contr.

By the claim above, $z^i_{n_i-1}<t_j$. Since $t_j\in U^\cpl$, it follows that $\mu(t_j,y)\leq\mu(z^i_{n_i-1},y)\leq\ba_l$.
\end{proof}

The following dual version of Lemma~\ref{L:Comptitj} can be proved likewise.

\begin{lemma}\label{L:Comptitjdual}
Let $i,j\in[3]$ be distinct. If $s_i\leq s_j$, then $\mu(x,s_i)\leq\ba_k$ for some $k\in[3]$.
\end{lemma}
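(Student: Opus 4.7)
The strategy is to mimic the proof of Lemma~\ref{L:Comptitj} in strictly dual fashion, replacing the suffix $z^i_{n_i-1}$ of $Z_i$ (the element just before~$y$) by the prefix $z^j_1$ of $Z_j$ (the element just after~$x$), and trading the clause of an $U$-polarized measure for the element $y\notin U$ against the one for $y\in U$.

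First I would dispose of the trivial case $s_i=x$, for which $\mu(x,s_i)=0$ by convention. Otherwise $x<s_i$, so $s_i\in U$ since $(s_i,t_i)$ is a peak with $s_i\neq x$. From $s_j\geq s_i>x$ we obtain that the peak index of the normal subdivision~$Z_j$ is at least~$1$; in particular $z^j_1$ is defined with $x<z^j_1\leq s_j$, and by normality of~$Z_j$ the bound $\mu(x,z^j_1)\leq\ba_k$ holds for some $k\in[3]$. The heart of the argument is then the claim that $s_i<z^j_1$. Granting this, applying the $U$-polarization clause to the chain $x<s_i<z^j_1$ (valid since $s_i\in U$) gives $\mu(x,s_i)\leq\mu(x,z^j_1)\leq\ba_k$, as desired.

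The claim $s_i<z^j_1$ is established by contradiction, in exact parallel with the original proof. Assuming $z^j_1\leq s_i$, normality of $Z_j$ yields $\mu(z^j_1,s_j)\leq\ba$, and the polarization clause for $s_i\in U$ situated between $z^j_1$ and $s_j$ upgrades this to $\mu(z^j_1,s_i)\leq\ba$. Combining this with $\mu(s_i,y)\leq\ba\vee\bb_i$ (from $s_i,y\in Z_i$ and normality of~$Z_i$) gives $\mu(z^j_1,y)\leq\ba\vee\bb_i$; on the other hand $z^j_1,y\in Z_j$ yields $\mu(z^j_1,y)\leq\ba\vee\bb_j$. Intersecting these two bounds in $\sB(3,3)$ forces $\mu(z^j_1,y)\leq(\ba\vee\bb_i)\wedge(\ba\vee\bb_j)=\bp$, whence by minimality of $y-x$ we conclude $\mu(z^j_1,y)\leq\ba$, and therefore $\bp=\mu(x,y)\leq\mu(x,z^j_1)\vee\mu(z^j_1,y)\leq\ba$, \contr. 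The main obstacle is really only bookkeeping—verifying that $z^j_1$ exists and that the polarization direction matches at each use—but since the two clauses of Definition~\ref{D:PolMeas} are symmetric, no new idea beyond the original argument is required.
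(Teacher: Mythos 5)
Your proof is correct and is exactly the dualization the paper intends (the paper merely states that Lemma~\ref{L:Comptitjdual} ``can be proved likewise''): you correctly swap the suffix $z^i_{n_i-1}$ for the prefix $z^j_1$, use clause~(ii) of Definition~\ref{D:PolMeas} in place of clause~(iii), and every step — the existence of $z^j_1$ below $s_j$, the bound $\mu(x,z^j_1)\leq\ba_k$ from normality, and the contradiction argument via $(\ba\vee\bb_i)\wedge(\ba\vee\bb_j)=\bp$ and minimality of $y-x$ — checks out.
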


Now we can conclude the proof of Theorem~\ref{T:B33}. We may assume without loss of generality that $t_1\leq t_2\leq t_3$. It follows from Lemma~\ref{L:Comptitj} that $\mu(t_2,y)\leq\ba_l$ for some $l\in[3]$. Since $t_2\leq t_3\leq y$ and $t_3\in U^\cpl\cup\set{y}$, it follows that $\mu(t_3,y)\leq\mu(t_2,y)\leq\ba_l$.

Next, suppose that $s_2\leq s_3$. It follows from Lemma~\ref{L:Comptitjdual} that $\mu(x,s_2)\leq\ba_k$ for some $k\in[3]$, so
 \[
 \bp=\mu(x,y)\leq\mu(x,s_2)\vee\mu(s_2,t_2)\vee\mu(t_2,y)
 \leq\ba_k\vee\ba_l\vee\bb_2\,,
 \]
\contr. On the other hand, if $s_3\leq s_2$, then, again by Lemma~\ref{L:Comptitjdual}, $\mu(x,s_3)\leq\ba_k$ for some $k\in[3]$, so
 \[
 \bp=\mu(x,y)\leq\mu(x,s_3)\vee\mu(s_3,t_3)\vee\mu(t_3,y)
 \leq\ba_k\vee\ba_l\vee\bb_3\,,
 \]
\contr\ again. This completes the proof of Theorem~\ref{T:B33}.

\bigskip
By combining the result of Theorem~\ref{T:B33} with those of Proposition~\ref{P:Perm(n)compl}, Lemma~\ref{L:DualBmn}, and Proposition~\ref{P:EmbBm2}, we obtain the following analogue, for permutohedra, of Theorem~\ref{T:EmbBn01}.

\begin{theorem}\label{T:EmbBn012}
Let~$m$ and~$n$ be natural numbers. Then the lattice~$\sB(m,n)$ embeds into some permutohedron if{f} either $m\leq2$ or~$n\leq2$.
\end{theorem}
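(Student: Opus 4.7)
The proof splits into the two directions of the biconditional, with the \emph{if} direction reducing to Theorem~\ref{T:EmbBn01} and Proposition~\ref{P:EmbBm2} via duality, and the \emph{only if} direction reducing to Theorem~\ref{T:B33} via an embedding $\sB(3,3)\hookrightarrow\sB(m,n)$.

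For the \emph{if} direction, I plan to use Lemma~\ref{L:DualBmn} together with Proposition~\ref{P:Perm(n)compl} to reduce to the case $n\leq2$: the former states that $\sB(m,n)$ and $\sB(n,m)$ are dually isomorphic, while the latter states that every permutohedron is self-dual, so the class of sublattices of permutohedra is closed under dualization. Assuming $n\leq2$, if $n\leq1$ then Theorem~\ref{T:EmbBn01} embeds $\sB(m,n)$ into an associahedron, which by Corollary~\ref{C:Int(Uclosed)} (with $U:=[n]$) is a sublattice of the ambient permutohedron; and if $n=2$, Proposition~\ref{P:EmbBm2} yields an embedding $\sB(m,2)\hookrightarrow\sP_{[m+2,2m+1]}(2m+2)$, which is a sublattice of $\sP(2m+2)$ by Corollary~\ref{C:Int(Uclosed)}.

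For the \emph{only if} direction, suppose $m,n\geq3$. The plan is to exhibit a lattice embedding $\iota\colon\sB(3,3)\hookrightarrow\sB(m,n)$; combining this with Theorem~\ref{T:B33}, any hypothetical embedding $\sB(m,n)\hookrightarrow\sP(\ell)$ would compose with $\iota$ to produce an embedding $\sB(3,3)\hookrightarrow\sP(\ell)$, a contradiction. To construct $\iota$, coalesce the extra atoms of $\sB(m,n)$: set $A_1:=\ba_1$, $A_2:=\ba_2$, $A_3:=\ba_3\vee\cdots\vee\ba_m$, $B_1:=\bb_1$, $B_2:=\bb_2$, $B_3:=\bb_3\vee\cdots\vee\bb_n$ in $\sB(m,n)$. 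These are six pairwise disjoint nonzero elements of the underlying Boolean lattice $L$ of $\sB(m,n)$; they generate a $0,1$-Boolean-sublattice $\cB$ of $L$ isomorphic to the $6$-atom Boolean lattice, and crucially $A_1\vee A_2\vee A_3$ equals the doubled element $\ba^*:=\ba_1\vee\cdots\vee\ba_m$. Define $\iota$ on the Boolean part of $\sB(3,3)$ as the canonical atom-matching isomorphism onto $\cB$ (sending $\ba_i^{\sB(3,3)}\mapsto A_i$, $\bb_j^{\sB(3,3)}\mapsto B_j$), and set $\iota(\bp^{\sB(3,3)}):=\bp^{\sB(m,n)}$.

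The main task, and the only real obstacle, is to verify that $\iota$ is a lattice embedding; this amounts to checking that the doubling construction is compatible with $0,1$-Boolean-sublattice inclusions that preserve the doubled element. Injectivity is immediate, since $\iota$ is injective on Boolean parts and sends the unique non-Boolean element $\bp^{\sB(3,3)}$ to the non-Boolean element $\bp^{\sB(m,n)}$. For the lattice operations, the inserted element $\bp$ is governed, in either $\sB(3,3)$ or $\sB(m,n)$ and for every Boolean argument $y$, by the rules $\bp\vee y=\ba^*\vee y$ when $y\not\leq\ba^*$, $\bp\vee y=\bp$ when $y\leq\ba^*$, and dually $\bp\wedge y=y$ when $y\leq\ba^*$, $\bp\wedge y=\ba^*\wedge y$ otherwise. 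Since $\iota$ is a Boolean homomorphism on the Boolean parts, sends $\ba^{*,\sB(3,3)}$ to $\ba^{*,\sB(m,n)}$, and satisfies $y\leq\ba^{*,\sB(3,3)}$ iff $\iota(y)\leq\ba^{*,\sB(m,n)}$, all these defining rules are preserved, so $\iota$ preserves both joins and meets. The case analysis required—splitting on whether Boolean arguments lie below, above, or incomparable to $\ba^*$—is routine but is the step that justifies passing from Theorem~\ref{T:B33} to the present theorem.
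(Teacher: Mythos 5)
Your proof is correct and takes essentially the same route as the paper, which establishes this theorem by simply combining Theorem~\ref{T:B33}, Proposition~\ref{P:Perm(n)compl}, Lemma~\ref{L:DualBmn}, and Proposition~\ref{P:EmbBm2}; your explicit coalescing embedding $\sB(3,3)\hookrightarrow\sB(m,n)$ for $m,n\geq3$ is exactly the analogue of the sublattice claim $\sB(2,2)\hookrightarrow\sB(m,n)$ that the paper invokes in the proof of Theorem~\ref{T:EmbBn01}. The only blemish is that your displayed rule for $\bp\wedge y$ omits the case where $y$ lies strictly above the doubled element $\ba^*$ (there $\bp\wedge y=\bp$, not $\ba^*\wedge y$); since $\iota$ reflects the relation $\ba^*<y$, the three-way case split you announce still yields preservation of meets.
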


\section{A large permutohedron with a preimage of $\sB(3,3)$}\label{S:SplitEq}

After several unsuccessful attempts to turn Theorem~\ref{T:B33} to an identity holding in all permutohedra while failing in~$\sB(3,3)$, we (the authors of the present paper) started wondering whether it could actually be the case that~$\sB(3,3)$ satisfies every lattice-theoretical identity satisfied by all permutohedra! The goal of the present section is to provide a proof that this guess was correct.

In order to do this, we shall need the notion of \emph{splitting identity} of a finite, bounded, subdirectly irreducible lattice. Such lattices are often called \emph{splitting lattices} (after McKenzie~\cite{McKe72}, see also Freese, Je\v{z}ek, and Nation~\cite{FJN}). It is a classical result of lattice theory (cf. Freese, Je\v{z}ek, and Nation~\cite[Corollary~2.76]{FJN}) that for every splitting lattice~$K$, there exists a largest lattice variety~$\cC_K$ which is maximal with respect to not containing~$K$ as a member. Furthermore, $\cC_K$ can be defined by a single lattice identity, called a \emph{splitting identity} for~$K$, and there is an effective way to compute such an identity.

We shall apply this algorithm (given by \cite[Corollary~2.76]{FJN}) to the six-element set $X=\set{\sx_1,\sx_2,\sx_3,\sy_1,\sy_2,\sy_3}$, the lattice~$\sB(3,3)$, with $u=\bp$ and $v=\ba=\ba_1\vee\ba_2\vee\ba_3$, and the unique lattice homomorphism $f\colon\FL(X)\twoheadrightarrow\sB(3,3)$ such that $f(\sx_i)=\ba_i$ and $f(\sy_i)=\bb_i$ for each $i\in[3]$ (where $\FL(X)$ denotes the free lattice on~$X$). {}From $\bp=\bigwedge_{j\in\set{1,2}}(\ba_1\vee\ba_2\vee\ba_3\vee\bb_j)$ it follows that~$f$ is surjective.

For each $i\in[3]$, denote by~$i'$ and~$i''$ the other two elements of~$[3]$. We introduce new lattice terms by
 \begin{gather*}
 \sx=\sx_1\vee\sx_2\vee\sx_3\,,\quad\sy=\sy_1\vee\sy_2\vee\sy_3\,,\\
 \hsx_i=\sx_{i'}\vee\sx_{i''}\vee\sy\,,\quad
 \hsy_i=\sy_{i'}\vee\sy_{i''}\vee\sx\,,\quad\text{for each }i\in[3]\,,
 \end{gather*}
and the corresponding elements of~$\sB(3,3)$,
 \begin{gather*}
 \ba=\ba_1\vee\ba_2\vee\ba_3\,,\quad\bb=\bb_1\vee\bb_2\vee\bb_3\,,\\
 \hba_i=\ba_{i'}\vee\ba_{i''}\vee\bb\,,\quad
 \hbb_i=\bb_{i'}\vee\bb_{i''}\vee\ba\,,\quad\text{for each }i\in[3]\,.
 \end{gather*}
The $0$th stage~$\beta_0$ of the lower limit table (cf. Freese, Je\v{z}ek, and Nation \cite[Theorem~2.4]{FJN}) on the \jirr\ elements of~$\sB(3,3)$ is given by
 \[
 \beta_0(\ba_i)=\sx_i\text{ and }\beta_0(\bb_i)=\sy_i\quad
 \text{for each }i\in[3]\,,\quad\beta_0(\bp)=1\,.
 \]
Then, using the only minimal join-coverings of~$\sB(3,3)$, namely $\bp<\ba_1\vee\ba_2\vee\ba_3\vee\bb_j$ for each $j\in[3]$, we obtain the first stage~$\beta_1$ of the lower limit table of $\sB(3,3)$ on the \jirr\ elements of~$\sB(3,3)$:
 \begin{align*}
 \beta_1(\ba_i)&=\beta_0(\ba_i)=\sx_i\,,\\
 \beta_1(\bb_i)&=\beta_0(\bb_i)=\sy_i\,,\\
 \intertext{while}
 \beta_1(\bp)&=\bigwedge_{j=1}^3
 \bigl(\beta_0(\ba_1)\vee\beta_0(\ba_2)\vee\beta_0(\ba_3)
 \vee\beta_0(\bb_j)\bigr)\\
 &=\bigwedge_{j=1}^3\bigl( \sx_1\vee\sx_2\vee\sx_3\vee\sy_j\bigr)\,.
 \end{align*}
Since $\rD_1(\sB(3,3))=\sB(3,3)$, it follows from \cite[Lemma~2.7]{FJN} that $\beta=\beta_1$.

Similar calculations yield the upper limit table for~$\sB(3,3)$ on the \mirr\ elements of~$\sB(3,3)$:
 \begin{gather*}
 \alpha_0(\hba_i)=\hsx_i\,,\quad\alpha_0(\hbb_i)=\hsy_i,,\quad
 \alpha_0(\ba)=\sx\,,\\
 \alpha_1(\hba_i)=\alpha_0(\hba_i)=\hsx_i\,,\quad
 \alpha_1(\hbb_i)=\alpha_0(\hbb_i)=\hsy_i\,,\\
 \alpha_1(\ba)=\sx\vee\bigvee_{i=1}^3\bigl(
 \hsx_i\wedge\hsy_1\wedge\hsy_2\wedge\hsy_3\bigr)\,.
 \end{gather*}
Furthermore, as obviously
 \[
 \sx_{i'}\vee\sx_{i''}\leq\hsx_i\wedge\hsy_1\wedge\hsy_2\wedge\hsy_3\,,
 \]
we obtain $\sx\leq\bigvee_{i=1}^3(\hsx_i\wedge\hsy_1\wedge\hsy_2\wedge\hsy_3)$, thus
 \[
 \alpha_1(\ba)=\bigvee_{i=1}^3\bigl(
 \hsx_i\wedge\hsy_1\wedge\hsy_2\wedge\hsy_3\bigr)\,.
 \]
Since $\rD_1(\sB(3,3)^\op)=\sB(3,3)^\op$, it follows that $\alpha=\alpha_1$.

Consequently, by Freese, Je\v{z}ek, and Nation~\cite[Corollary~2.76]{FJN}, a splitting identity for~$\sB(3,3)$ is given by
 \begin{equation}\label{Eq:Spl1B330}
 \bigwedge_{1\leq j\leq 3}(\sx_1\vee\sx_2\vee\sx_3\vee\sy_j)\leq
 \bigvee_{1\leq i\leq 3}(\hsx_i\wedge
 \hsy_1\wedge\hsy_2\wedge\hsy_3)\,.
 \end{equation}
While all the splitting identities for~$\sB(3,3)$ are equivalent, we shall work with the one given by~\eqref{Eq:Spl1B330}. We obtained the example underlying Theorem~\ref{T:Perm12} with the assistance of the \texttt{Mace4} component of the \texttt{Prover9 - Mace4} software, see McCune~\cite{McCune}.

\begin{theorem}\label{T:Perm12}
Set $U=\set{5,6,9,10,11}$. Then the Cambrian lattice $\sA_U(12)$ does not satisfy the identity~\textup{\eqref{Eq:Spl1B330}}. Consequently, $\sB(3,3)$ is the homomorphic image of a sublattice of~$\sA_U(12)$.
\end{theorem}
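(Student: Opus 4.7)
The statement splits into two parts: the failure of the identity in $\sP_U(12)$, and the consequence that $\sB(3,3)$ is a homomorphic image of a sublattice of $\sP_U(12)$. My plan is to handle them in that order.

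For the first part, since the target is to refute a universally quantified identity in a finite lattice, the proof will consist of producing an explicit assignment $(\bx_1,\bx_2,\bx_3,\by_1,\by_2,\by_3)$ of six elements of $\sP_U(12)$ (each being a clopen subset of $\cI_{12}$ belonging to $\sD_U(12)$ in the sense of Definition~\ref{D:Alterno}) witnessing that the left-hand side of~\eqref{Eq:Spl1B330} is not contained in the right-hand side. Once such a tuple is obtained (by the \texttt{Mace4} search announced in the narrative), the verification is mechanical: meets are computed as set intersections and joins as transitive closures of unions of sets of pairs, as per Corollary~\ref{C:Int(Uclosed)}; one then needs only to exhibit one pair $(i,j)\in\cI_{12}$ lying in the evaluated LHS but not in the evaluated RHS. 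I would display the six clopen subsets as finite lists of pairs, evaluate the auxiliary terms $\sx$, $\sy$, $\hsx_i$, $\hsy_i$ stepwise, and finally list the pair that separates the two sides.

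For the second part (the ``consequently''), I would invoke splitting theory as developed above the theorem. The identity~\eqref{Eq:Spl1B330} was constructed following Freese, Je\v{z}ek, and Nation \cite[Corollary~2.76]{FJN} as a splitting identity for the splitting lattice $\sB(3,3)$. The defining property of such an identity is that a lattice $L$ satisfies it if and only if $\sB(3,3)$ does not belong to the variety $V(L)$ generated by $L$. By the first part, $\sP_U(12)$ does not satisfy~\eqref{Eq:Spl1B330}, hence $\sB(3,3)\in V(\sP_U(12))$. Since the variety of all lattices is congruence-distributive, J\'onsson's lemma applies: every subdirectly irreducible member of $V(\sP_U(12))$ lies in $HSP_u(\sP_U(12))$. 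As $\sB(3,3)$ is finite and subdirectly irreducible (cf. Jipsen--Rose \cite[Theorem~4.17]{JiRo}) and $\sP_U(12)$ is finite, ultrapowers collapse and we conclude $\sB(3,3)\in HS(\sP_U(12))$, that is, $\sB(3,3)$ is a homomorphic image of a sublattice of $\sP_U(12)$, as desired.

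The main obstacle is purely the first step: finding the witnessing six-tuple of clopen transitive subsets of $\cI_{12}$ in the correct Cambrian lattice. The target lattice $\sP_U(12)$ has many thousands of elements, so the existence of a falsifying assignment is not evident \emph{a priori}, and no obvious structural construction of it suggests itself; this is exactly why the authors had to resort to \texttt{Prover9 - Mace4}. By contrast, once the assignment is in hand, both the computation refuting~\eqref{Eq:Spl1B330} and the deductive passage to the ``consequently'' clause are routine.
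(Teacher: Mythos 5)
Your treatment of the ``consequently'' clause is correct and essentially identical to the paper's: the defining property of a splitting identity gives $\sB(3,3)\in V(\sP_U(12))$, and J\'onsson's Lemma together with the subdirect irreducibility of~$\sB(3,3)$ and finiteness (so that ultrapowers collapse) yields $\sB(3,3)\in HS(\sP_U(12))$. If anything you are more precise than the paper here, since the subdirect irreducibility that matters for J\'onsson's Lemma is that of~$\sB(3,3)$, not of~$\sP_U(12)$.

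The first claim, however, is not proved: you describe the shape of a proof (exhibit a six-tuple, compute, display a separating pair) but never produce the witnessing assignment, and you acknowledge that no construction of it ``suggests itself.'' That assignment \emph{is} the content of the theorem's first assertion, so its absence is a genuine gap rather than a routine omission. For the record, the paper's witness is remarkably compact: each of the six elements is a join of at most four \jirr\ elements $\pUji{i}{j}{U}$, namely $\ba_1:=\pUji{1}{5}{U}\vee\pUji{2}{3}{U}\vee\pUji{8}{12}{U}\vee\pUji{10}{11}{U}$, $\ba_2:=\pUji{3}{4}{U}\vee\pUji{5}{9}{U}$, $\ba_3:=\pUji{4}{8}{U}\vee\pUji{9}{10}{U}$, $\bb_j:=\pUji{1}{2}{U},\pUji{6}{7}{U},\pUji{11}{12}{U}$ for $j=1,2,3$; and membership of $(1,12)$ in the left-hand side is certified not by a brute-force closure computation but by three explicit subdivisions of $[1,12]$ (e.g. $1<2<3<4<8<12$ with successive measures $\bb_1,\ba_1,\ba_2,\ba_3,\ba_1$), one for each meetand $\ba_1\vee\ba_2\vee\ba_3\vee\bb_j$. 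The exclusion of $(1,12)$ from the right-hand side still requires a finite check, but without the explicit tuple and these certificates your argument does not establish that $\sP_U(12)$ fails~\eqref{Eq:Spl1B330}.
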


\begin{proof}
We consider the elements $\ba_1$, $\ba_2$, $\ba_3$, $\bb_1$, $\bb_2$, and~$\bb_3$ of~$\sA_U(12)$ defined as
 \begin{align*}
 \ba_1&=
 \pUji{1}{5}{U}\vee\pUji{2}{3}{U}\vee\pUji{8}{12}{U}\vee\pUji{10}{11}{U}\,;\\
 \ba_2&=\pUji{3}{4}{U}\vee\pUji{5}{9}{U}\,;\\
 \ba_3&=\pUji{4}{8}{U}\vee\pUji{9}{10}{U}\,;\\
 \bb_1&=\pUji{1}{2}{U}\,;\\
 \bb_2&=\pUji{6}{7}{U}\,;\\
 \bb_3&=\pUji{11}{12}{U}\,.
 \end{align*}
Due to the subdivisions
 \begin{align*}
 &1<2<3<4<8<12\,,&&\text{with successive measures }
 \bb_1\,,\ \ba_1\,,\ \ba_2\,,\ \ba_3\,,\ \ba_1\,,\\
 &1<5<6<7<8<12\,,&&\text{with successive measures }
 \ba_1\,,\ \ba_2\,,\ \bb_2\,,\ \ba_3\,,\ \ba_1\,,\\
 &1<5<9<10<11<12\,,&&\text{with successive measures }
 \ba_1\,,\ \ba_2\,,\ \ba_3\,,\ \ba_1\,,\ \bb_3\,,
 \end{align*}
we obtain that the pair $(1,12)$ belongs to $\bigwedge_{j=1}^3(\ba_1\vee\ba_2\vee\ba_3\vee\bb_j)$. On the other hand, evaluating the two sides of~\eqref{Eq:Spl1B330} at the~$\ba_i$s and~$\bb_i$s yields that $(1,12)$ does not belong to the right hand side of the equation. Therefore, $\sA_U(12)$ does not satisfy~\eqref{Eq:Spl1B330}.

Since~\eqref{Eq:Spl1B330} is a splitting identity for~$\sB(3,3)$, it follows that~$\sB(3,3)$ belongs to the lattice variety generated by~$\sA_U(12)$. Since~$\sA_U(12)$ is subdirectly irreducible (cf. Proposition~\ref{P:DecompPU(n)}), the final statement of Theorem~\ref{T:Perm12} follows from J\'onsson's Lemma (cf. Corollary~1.5 and Lemma~1.6 in Jipsen and Rose \cite{JiRo}).
\end{proof}

\begin{corollary}\label{C:Perm12}
The lattice~$\sB(3,3)$ satisfies every lattice-theoretical identity satisfied by~$\sA_U(12)$, thus also every lattice-theoretical identity satisfied by the permutohedron~$\sP(12)$. In particular, $\sB(3,3)$ satisfies every lattice-theoretical identity satisfied by every permutohedron.
\end{corollary}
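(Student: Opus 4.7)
The plan is to show that the corollary is a straightforward varietal consequence of Theorem~\ref{T:Perm12}, using nothing more than the two basic facts that the class of lattices satisfying a given identity is closed under the formation of sublattices and of homomorphic images. Both facts are immediate: evaluating a term in a sublattice is the same as evaluating it in the ambient lattice, and applying a homomorphism to both sides of an evaluation commutes with all term operations.

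First I would invoke Theorem~\ref{T:Perm12}: $\sB(3,3)$ is the image, under some lattice homomorphism, of a sublattice of $\sP_U(12)$, where $U=\set{5,6,9,10,11}$. Combined with the preservation facts above, this shows that every lattice-theoretical identity valid in~$\sP_U(12)$ is also valid in~$\sB(3,3)$, yielding the first assertion.

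For the second assertion, I would appeal to Proposition~\ref{P:PUnjoinfitsPn} (or, equivalently, to the surjective homomorphism $\pi_U\colon\sP(12)\twoheadrightarrow\sP_U(12)$ produced by Lemma~\ref{L:NiceLattQuot}), which exhibits~$\sP_U(12)$ as a homomorphic image of~$\sP(12)$. By the same preservation principle, every identity holding in~$\sP(12)$ must hold in~$\sP_U(12)$, and hence, by the first assertion, in~$\sB(3,3)$. The final clause is the trivial observation that an identity satisfied by \emph{every} permutohedron is, \emph{a fortiori}, satisfied by~$\sP(12)$, and therefore by~$\sB(3,3)$.

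There is essentially no obstacle in this argument; the entire difficulty has been concentrated in Theorem~\ref{T:Perm12}, whose proof required the explicit construction, via \texttt{Mace4}, of the six elements of~$\sP_U(12)$ witnessing the failure of the splitting identity~\eqref{Eq:Spl1B330}, together with the use of J\'onsson's Lemma to pass from non-satisfaction of a splitting identity to membership of~$\sB(3,3)$ in $\mathsf{HSP}(\sP_U(12))$. The corollary is simply the explicit unpacking of that membership, combined with the remark $\sP_U(12)\in\mathsf{H}(\sP(12))$.
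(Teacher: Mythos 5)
Your argument is correct and is exactly the (implicit) justification the paper intends: the corollary follows from Theorem~\ref{T:Perm12} because identities are preserved under sublattices and homomorphic images, and from the fact that $\sP_U(12)$ is a homomorphic image (indeed a retract) of $\sP(12)$ via Proposition~\ref{P:PUnjoinfitsPn}. Nothing is missing.
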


\section{Open problems}\label{S:Pbs}

Almost every nontrivial question about embedding finite lattices into
Tamari lattices, permutohedra, or related objects, is open, so we shall just list a few here. Examples of fundamental questions are the following:
\begin{itemize}
\item[(1)] Is it decidable whether a given finite lattice embeds into some permutohedron (resp., Tamari lattice)?

\item[(2)] Is it decidable whether a given lattice-theoretical identity holds in all permutohedra (resp., Tamari lattices)?

\item[(3)] Can the lattice variety generated by all permutohedra (resp., Tamari lattices) be defined by a recursive set of lattice identities?

\item[(4)] Is the class of
all sublattices of Tamari lattices the intersection of a lattice variety
with the class of all finite bounded lattices? In particular, if a lattice~$L$ can be embedded into some Tamari lattice, is this also the case for all homomorphic images of~$L$? (By Theorems~\ref{T:B33}
and~\ref{T:Perm12}, the analogue of this problem for permutohedra has a negative answer.)

\item[(5)] Does there exist a nontrivial lattice-theoretical identity satisfied by all permutohedra? (The results of Section~\ref{S:B33} suggest a negative answer, while the results of Section~\ref{S:SplitEq} suggest a positive answer.)

  \item[(6)] Does every closed interval of a Tamari lattice (resp., a permutohedron) have a $(0,1)$-preserving lattice embedding into some Tamari lattice (resp., permutohedron)?

\end{itemize}

Caspard, Le Conte de Poly-Barbut, and Morvan proved in~\cite{CLM04}
that every finite Coxeter lattice (i.e., weak Bruhat order on a finite Coxeter group) is bounded.  All the analogues for Coxeter lattices of the questions above are open as well. Can every finite Coxeter lattice
be embedded into some permutohedron? (This is the case for Coxeter
lattices of type~$B_n$, but it needs to be worked out for other types,
such as~$D_n$.)

\end{document}